\newtheorem{theorem}{Theorem}[section]
\newtheorem{lemma}{Lemma}[section]
\newtheorem{corollary}{Corollary}[section]
\numberwithin{equation}{section}
\def\im{\mathrm{i}}
\def\d{\mathrm{d}}
\def\e{\mathrm{e}}
\def\O{{\mathcal O}}
\def\eps{\varepsilon}
\def\hyperF#1#2#3#4{{F}\!\left({#1, #2 \atop #3};#4\right)}
\def\hhyperF#1#2#3#4{{F}\!\left(#1, #2;#3;#4\right)}
\def\hyperOlverF#1#2#3#4{{\mathbf{F}}\!\left({#1, #2 \atop #3};#4\right)}
\author[G. Nemes]{Gerg\H{o} Nemes}
\email{nemes.gergo@renyi.hu}
\address{Alfr\'ed R\'enyi Institute of Mathematics, Re\'altanoda utca 13--15, Budapest H-1053, Hungary}
\author[A. B. Olde Daalhuis]{Adri B. Olde Daalhuis}
\email{A.OldeDaalhuis@ed.ac.uk}
\address{School of Mathematics and Maxwell Institute for Mathematical Sciences, The University of Edinburgh, Edinburgh EH9 3FD, United Kingdom}
\keywords{asymptotic expansions, error bounds, Ferrers functions, Gegenbauer function, hypergeometric function, Legendre functions}
\subjclass[2010]{41A60, 33C05, 33C45}
\begin{document}

\title[Large-parameter asymptotics for the Legendre and allied functions]{Large-parameter asymptotic expansions\\ for the Legendre and allied functions}

\begin{abstract} 
Surprisingly, apart from some special cases, simple asymptotic expansions for the associated Legendre functions
 $P_\nu ^\mu (z)$ and $Q_\nu ^\mu (z)$ for large degree $\nu$ or large order $\mu$ are not available in the literature. 
The main purpose of the present paper is to fill this gap by deriving simple (inverse) factorial expansions for these functions and provide sharp and realistic bounds on their error terms. Analogous results for the Ferrers functions and the closely related Gegenbauer function are also included. In the cases that $\nu$ is an integer or $2\mu$ is an odd integer, many of these new expansions terminate and provide finite representations in terms of simple functions. Most of these representations appear to be new. It is well known that the hypergeometric series can be regarded as a large-$c$ asymptotic expansion for the hypergeometric function $\hhyperF{a}{b}{c}{z}$. We also derive computable bounds for the remainder term of this expansion. To our best knowledge, no such estimates have been given in the literature prior to this paper.
\end{abstract}
\maketitle

\section{Introduction and main results}\label{SecIntro}

In this paper, we derive asymptotic expansions for the associated Legendre functions $P_\nu ^\mu  (z)$ and $Q_\nu ^\mu  (z)$, 
and the Ferrers functions $\mathsf{P}_\nu ^\mu  (x)$ and $\mathsf{Q}_\nu ^\mu  (x)$ in the case that either the degree $\nu$ or the order $\mu$ becomes large. These functions are solutions of the associated Legendre differential equation
\begin{equation}\label{LegendreODE}
(1 - z^2 )\frac{\d^2 w(z)}{\d z^2} - 2z\frac{\d w(z)}{\d z} + \left( \nu (\nu  + 1) - \frac{\mu ^2}{1 - z^2 } \right)w(z) = 0.
\end{equation}
The equation \eqref{LegendreODE} occurs in the theory of spherical harmonics, potential theory, quantum mechanics and in other branches of applied mathematics (see, for example, \cite[\href{http://dlmf.nist.gov/14}{Ch.\ 14}]{NIST:DLMF}).
Over the past several decades, a great deal of effort has gone into deriving asymptotic expansions for these functions. 
 Most of these (uniform) expansions are in terms of non-elementary functions (Airy functions, Bessel functions, parabolic cylinder functions, etc.) and often 
 have complicated coefficients (see the end of this section for a short summary). In contrast, the expansions we present in this paper are just (inverse) 
 factorial expansions, with simple, elementary coefficients. 
 In addition, we provide, in Sections \ref{SectHypergeometric}--\ref{GegenbauerBounds}, sharp and realistic error bounds for the new expansions. 
 Numerical examples demonstrating the sharpness of these bounds and the accuracy of the asymptotic expansions are given in Section \ref{Numerics}.
 Note, however, that unlike their uniform counterparts, our expansions break down near the transition points $z=\pm 1$ of the 
 equation \eqref{LegendreODE}, except for the large-$\mu$ asymptotic expansions for the associated Legendre functions which require $z$ to be bounded instead. We find it surprising that, apart from some special cases, these simple expansions have not been given in the literature before. 
 The reason might be that previous authors were mainly focusing on deriving asymptotic results that are uniform in character, and hence, 
 inevitably posses a rather complicated structure. Some of our results are special cases of the well-known large-$c$ asymptotic expansion of the hypergeometric function $\hhyperF{a}{b}{c}{z}$, for which we also provide, for the first time, computable error bounds.

A common feature of our large-$\nu$ expansions below is that they terminate when $2\mu$ equals an odd integer, thereby giving simple, closed-form expressions for the associated Legendre functions and the Ferrers functions. Most of these representations appear to be new. Similarly, the large-$\mu$ expansions terminate in the case that $\nu$ is an integer and, again, yield simple, finite representations.

We begin with the definition of the functions under consideration. The associated Legendre functions of complex degree $\nu$ and order $\mu$ are defined in terms of the (regularised) hypergeometric function as follows (cf. \cite[\href{http://dlmf.nist.gov/14.3.ii}{\S 14.3(ii)}]{NIST:DLMF}):
\[
P_\nu ^\mu  (z) = \left(\frac{z + 1}{z - 1}\right)^{\mu /2}\hyperOlverF{\nu  + 1}{- \nu }{1 - \mu}{\frac{1-z}{2}}
\]
for $z \in \mathbb{C}\setminus \left(-\infty,1\right]$, and
\begin{equation}\label{QhypergeoRepr1}
 \e^{-\pi \im\mu }Q_\nu ^\mu  (z) = \frac{\pi ^{1/2} \Gamma (\nu  +\mu  +  1)\left(z^2  - 1\right)^{\mu /2} }{2^{\nu  + 1} z^{\nu  +\mu  + 1}}
\hyperOlverF{\frac{\nu  +\mu  + 2}{2}}{\frac{\nu  +\mu  + 1}{2}}{\nu  + \frac{3}{2}}{\frac{1}{z^2}}
\end{equation}
for $z \in \mathbb{C}\setminus \left(-\infty,1\right]$ and $\nu  + \mu  \in \mathbb{C} \setminus \mathbb{Z}_{ < 0}$. For analytic continuation to other Riemann sheets, see \cite[\href{http://dlmf.nist.gov/14.24}{\S 14.24}]{NIST:DLMF}. The Ferrers functions of complex degree $\nu$ and order $\mu$ are defined as follows (cf. \cite[\href{http://dlmf.nist.gov/14.3.i}{\S 14.3(i)}]{NIST:DLMF}):
\begin{equation}\label{PFerrersdef}
\mathsf{P}_\nu ^\mu  (x) = \left( \frac{1 + x}{1 - x} \right)^{\mu /2} \hyperOlverF{\nu  + 1}{ - \nu}{1 - \mu }{\frac{1-x}{2}}
\end{equation}
for $-1<x<1$, and
\begin{multline*}
\mathsf{Q}_\nu ^\mu  (x) = \frac{\pi }{2\sin (\pi \mu )}\left( \cos (\pi \mu )\left( \frac{1 + x}{1 - x}\right)^{\mu /2} \hyperOlverF{\nu  + 1}{- \nu}{1 - \mu}{\frac{1-x}{2}}\right.\\
\left.- \frac{\Gamma (\nu  + \mu  + 1)}{\Gamma (\nu  - \mu  + 1)}\left( \frac{1 - x}{1 + x} \right)^{\mu /2} \hyperOlverF{\nu  + 1}{- \nu}{1 + \mu}{\frac{1-x}{2}}\right)
\end{multline*}
for $-1<x<1$ and $\nu  + \mu  \in \mathbb{C} \setminus \mathbb{Z}_{ < 0}$. When $\mu$ is an integer, the right-hand side is replaced by its limiting value.

We shall also study the closely related Gegenbauer function. The Gegenbauer function of complex degree $\nu$ and order $\lambda$ may be defined as follows (cf. \cite[\href{http://dlmf.nist.gov/14.3.E21}{Eq.\ 14.3.21}]{NIST:DLMF} and \cite[\href{http://dlmf.nist.gov/14.3.E22}{Eq.\ 14.3.22}]{NIST:DLMF}):
\begin{equation}\label{eq69}
C_\nu ^{(\lambda )} (z) = \frac{\sqrt\pi  \Gamma (\nu  + 2\lambda )}{\Gamma (\lambda )\Gamma (\nu  + 1)\left(2\sqrt{z^2-1}\right)^{\lambda  - \frac{1}{2}} }
P_{\nu  + \lambda  - \frac{1}{2}}^{\frac{1}{2} - \lambda } (z)
\end{equation}
for $z \in \mathbb{C}\setminus \left(-\infty,1\right]$, and
\begin{equation}\label{eq66}
C_\nu ^{(\lambda )} (x) = \frac{\sqrt\pi \Gamma (\nu  + 2\lambda )}{\Gamma (\lambda )\Gamma (\nu  + 1)\left(2\sqrt{1-x^2}\right)^{\lambda  - \frac{1}{2}} }
\mathsf{P}_{\nu  + \lambda  - \frac{1}{2}}^{\frac{1}{2} - \lambda } (x)
\end{equation}
for $-1<x<1$.

Our initial focus is on the large-$\nu$ asymptotics. In the case that $\nu$ is large, $Q_\nu ^\mu  (z)$ is the recessive solution of differential equation \eqref{LegendreODE}. Our main tools are the following convenient representations for this function.

(1) The associated Legendre function of the second kind can be represented in terms of the modified Bessel function of the second kind as
\begin{equation}\label{QIntRepr1}
 \e^{-\pi \im\mu }Q_\nu ^\mu  (\cosh \xi ) = \frac{1}{\Gamma (\nu  - \mu  + 1)}\int_0^{ + \infty } t^\nu  \e^{ - t\cosh \xi } K_\mu  (t\sinh \xi )\d t ,
\end{equation}
provided that $\xi>0$ and $\Re\nu  > |\Re\mu | - 1$ (compare \cite[Eq.\ (6.8.29)]{Erdelyi1954} and \eqref{QhypergeoRepr1}). An immediate consequence of the factor $t^\nu$ in the integrand in \eqref{QIntRepr1} is that when $\Re\nu$ is large and positive, the main contribution to this integral comes from large values of $t$, and thus we can obtain a large-$\nu$ asymptotic expansion by expanding the Bessel function by its well-known large-$t$ asymptotic expansion. In this way it is also straightforward to obtain sharp and realistic error bounds.

(2) If combined with a quadratic transformation \cite[\href{http://dlmf.nist.gov/15.8.E19}{Eq.\ 15.8.19}]{NIST:DLMF} for the hypergeometric function, \eqref{QhypergeoRepr1} yields the representation
\begin{equation}\label{QhypergeoRepr2}
\e^{-\pi\im\mu}Q_\nu ^\mu  (\cosh\xi)= \sqrt{\frac{\pi}{2\sinh\xi}}\e^{-\left(\nu+\frac12\right)\xi}\Gamma(\nu+\mu+1)
\hyperOlverF{\frac12+\mu}{\frac12-\mu}{\nu+\frac32}{\frac{-\e^{-\xi}}{2\sinh\xi}},
\end{equation}
(cf. \cite[Eq.\ (3.2.44)]{HTF1})
in which, again,  $\nu  + \mu  \in \mathbb{C} \setminus \mathbb{Z}_{ < 0}$ and we take $\xi\in\mathcal{D}_1$ where 
\begin{equation}\label{RegionD}
\mathcal{D}_p=\left\{\xi : \Re \xi>0, |\Im \xi |<\pi p\right\}, \quad p>0.
\end{equation} 
Note that the function $\cosh \xi$ is a biholomorphic bijection between $\mathcal{D}_1$ and $\mathbb{C}\setminus \left(-\infty,1\right]$.

Any hypergeometric function for which a quadratic transformation exists can be expressed in terms of the associated Legendre functions or the Ferrers functions (cf. \cite[\href{https://dlmf.nist.gov/15.9.iv}{\S 15.9(iv)}]{NIST:DLMF}). Consequently, there are several representations in terms of hypergeometric functions. For large parameter asymptotic approximations of hypergeometric functions see, e.g., \cite{Watson1918} and \cite{KwOD14}. These results can be used to obtain asymptotic expansions for the associated Legendre functions or the Ferrers functions. In general, the hypergeometric series
\[
\hyperF{a}{b}{c}{z}=\Gamma(c)\hyperOlverF{a}{b}{c}{z}=\sum_{n=0}^\infty\frac{\left(a\right)_n \left(b\right)_n}{\left(c\right)_n n!} z^n
\]
converges only for $|z|<1$, but it can be regarded as a large-$c$ asymptotic series in much larger complex $z$-domains 
(cf. \cite[\href{https://dlmf.nist.gov/15.12.ii}{\S 15.12(ii)}]{NIST:DLMF}). Thus, the representation \eqref{QhypergeoRepr2} is very 
convenient since it is possible to obtain a large-$\nu$ asymptotic expansion by replacing the hypergeometric function with its hypergeometric series. This asymptotic expansion will be valid in the sector $|\arg\nu|\leq\pi-\delta$ ($<\pi$) and for $\xi\in\mathcal{D}_1$, $\left| \sinh \xi \right| \ge \eps$ ($> 0$). Error bounds for this expansion will then follow from the more general results we shall prove for the hypergeometric series (see Section \ref{SectHypergeometric}).

Before stating our main results, we introduce some notation. We define, for any $n\geq 0$,
\[
a_n (\mu ) = \frac{(4\mu ^2  - 1^2 )(4\mu ^2  - 3^2 ) \cdots (4\mu ^2  - (2n - 1)^2 )}{8^n n!}
=\frac{\left(\frac12-\mu\right)_n\left(\frac12+\mu\right)_n}{\left(-2\right)^n n!},
\]
and let
\begin{equation}\label{eq6}
C(\xi ,\mu ) = \begin{cases} \sin (\pi \mu ) & \text{if }\; \xi > 0, \\ \pm \im \e^{ \mp \pi \im\mu } & \text{if } \; 0<\pm \Im\xi  < \pi, \end{cases}
\qquad \chi(p)= \frac{\sqrt\pi\Gamma\left(\frac{p}{2}+1\right)}{\Gamma\left(\frac{p}{2}+\frac{1}{2}\right)}, \quad p>0.
\end{equation}

The main large parameter asymptotic expansions we prove in this paper are as follows.

\subsection*{Legendre functions for large \texorpdfstring{$\nu$}{nu} and fixed \texorpdfstring{$\mu$}{mu}.} Assume that $\xi\in\mathcal{D}_1$, $\left| \sinh \xi \right| \ge \eps$ ($> 0$) and $\mu\in \mathbb{C}$ is bounded. Then the associated Legendre functions have the inverse factorial expansions
\begin{gather}\label{formal1}
\begin{split}
 P_\nu ^\mu  (\cosh \xi )\sim \; & \frac{\e^{ \left(\nu+\frac{1}{2} \right)\xi}}{\sqrt{2\pi \sinh \xi}} \sum_{n = 0}^{\infty} a_n (\mu ) 
\frac{\Gamma \left( \nu  - n + \tfrac{1}{2} \right)}{\Gamma (\nu  - \mu  + 1)} \left( \frac{-\e^{ - \xi } }{\sinh \xi } \right)^n \\
& + \frac{C(\xi ,\mu ) \e^{- \left(\nu+\frac{1}{2} \right)\xi}}{\sqrt{2\pi \sinh \xi}} \sum_{n = 0}^{\infty} a_n (\mu )  
\frac{\Gamma \left( \nu  - n + \tfrac{1}{2} \right)}{\Gamma (\nu  - \mu  + 1)}\left( \frac{\e^\xi }{\sinh \xi} \right)^n,
\end{split}
\end{gather}
\begin{equation}\label{formal2}
 \e^{-\pi \im\mu }Q_\nu ^\mu  (\cosh \xi ) \sim \sqrt{\frac{\pi}{2\sinh \xi } } \e^{ -  \left(\nu+\frac{1}{2}\right) \xi}
\sum_{n = 0}^{\infty} a_n (\mu )\frac{\Gamma \left( \nu  - n + \tfrac{1}{2} \right)}{\Gamma (\nu  - \mu  + 1)}\left( \frac{\e^\xi }{\sinh \xi } \right)^n,
\end{equation}
as $\Re\nu\to+\infty$, with $\Im \nu$ being bounded. These results are direct consequences of Theorems \ref{thm1} and \ref{thm2}, where we give sharp and realistic error bounds for the truncated versions of the expansions. We remark that in the special case that $\xi$ is positive, $\nu$ is a positive integer and $\mu=0$, these expansions (without error estimates) were also given by Olver \cite{Olver2000}. For more information regarding inverse factorial expansions, see, e.g., \cite{OD04a}.

Under the same assumptions, the associated Legendre function of the second kind has the factorial expansion
\begin{equation}\label{formal3}
\e^{-\pi\im\mu}Q_\nu ^\mu  (\cosh\xi)\sim \sqrt{\frac{\pi}{2\sinh\xi}}\e^{-\left(\nu+\frac12 \right)\xi}
\sum_{n=0}^{\infty} a_n(\mu)\frac{\Gamma(\nu+\mu+1)}{\Gamma\left(\nu+\frac32+n\right)}\left(\frac{\e^{-\xi}}{\sinh\xi}\right)^n,
\end{equation}
as $|\nu|\to+\infty$ in the sector $|\arg\nu|\leq\pi-\delta$ ($<\pi$). This result is a direct consequence of Theorems \ref{thm211} and \ref{thm21}, where we provide sharp and realistic error bounds for the truncated version of the expansion. For more information regarding factorial series, see, e.g., \cite{Delabaere2007}. Note that Theorem \ref{thm21} implies \eqref{formal3} for bounded values of $\mu$ satisfying $- \frac{1}{2} < \Re \mu$. This restriction can be removed by appealing to the connection formula \eqref{qmurefl}.

We observe that when $\e^{2\Re\xi}>2\cos\left(2\Im\xi\right)$, the infinite series on the right-hand side of \eqref{formal3} converges to its left-hand side. This is no surprise because this infinite series is just the power series for the hypergeometric function in \eqref{QhypergeoRepr2}. Note that when $\e^{-2\Re\xi}>2\cos\left(2\Im\xi\right)$ and $\nu+\frac{1}{2}\notin \mathbb{Z}$, the infinite series on the right-hand side of \eqref{formal2} is also convergent, however, its sum is not the associated Legendre function on the left-hand side.

We mention that if $2\mu$ equals an odd integer, then the expansions \eqref{formal1}, \eqref{formal2} and \eqref{formal3} terminate and 
represent the corresponding function exactly. This generalises the cases $2\mu=\pm1$ given in \cite[\href{https://dlmf.nist.gov/14.5.iii}{\S14.5(iii)}]{NIST:DLMF}. The exact representations resulting from \eqref{formal3} were also found earlier by Cohl et al. \cite[Eq.\ (5.5)]{Cohl2011}.

To obtain a large-$\nu$ asymptotic expansion for the associated Legendre function $P_\nu ^\mu  (z)$ that holds in the sectors ($0<$) $\delta \leq \left| \arg \nu \right| \leq \pi-\delta$ ($<\pi$), we can combine \eqref{formal3} with the connection formula
\[
\e^{-\pi \im\mu }\cos (\pi \nu )P_\nu ^\mu  (\cosh \xi ) = \frac{Q_{ - \nu  - 1}^{ - \mu } (\cosh \xi ) - Q_\nu ^{ - \mu } (\cosh \xi )}{\Gamma (\nu  - \mu  + 1)\Gamma ( - \nu  - \mu )}
\]
(combine \cite[\href{http://dlmf.nist.gov/14.9.E12}{Eq.\ 14.9.12}]{NIST:DLMF} with \cite[\href{http://dlmf.nist.gov/14.3.E10}{Eq.\ 14.3.10}]{NIST:DLMF}). 

\subsection*{Legendre functions for large \texorpdfstring{$\mu$}{mu} and fixed \texorpdfstring{$\nu$}{nu}.} To obtain large-$\mu$ asymptotic expansions for the associated Legendre functions, we can combine the results above with the identities
\begin{align}
P_\nu ^\mu  (\coth \xi ) & = \sqrt{\frac{2 \sinh \xi}{\pi}} \left( \sin (\pi \mu )\Gamma (\nu  + \mu  + 1)P_{\mu  - \frac{1}{2}}^{ - \nu  - \frac{1}{2}} (\cosh \xi )\right.\label{Whipple1}\\
&\qquad\qquad\qquad\qquad\qquad\qquad -\left.\frac{\im\e^{ - \pi \im\nu } }{\Gamma (\nu  - \mu  + 1)}Q_{\mu  - \frac{1}{2}}^{\nu  + \frac{1}{2}} (\cosh \xi ) \right),\nonumber\\
Q_\nu ^\mu  (\coth \xi ) & = \sqrt{\frac{\pi \sinh \xi}{2} } \e^{\pi \im\mu } \Gamma (\nu+\mu  + 1)P_{\mu  - \frac{1}{2}}^{ - \nu  - \frac{1}{2}} (\cosh \xi ).\label{Whipple2}
\end{align}
These identities follow from Whipple's formulae \cite[\href{http://dlmf.nist.gov/14.9.iv}{\S 14.9(iv)}]{NIST:DLMF} and the equations 
\cite[\href{http://dlmf.nist.gov/14.3.E10}{Eq.\ 14.3.10}]{NIST:DLMF}, \eqref{eqpref} and \eqref{eqqref}. For example, if $\xi\in\mathcal{D}_{\frac12}$, $\left| \sinh \xi \right| \ge \eps$ ($> 0$) and $\nu\in \mathbb{C}$ is bounded, \eqref{formal1} and \eqref{formal2} yield the inverse factorial expansions
\begin{gather}\label{formal4}
\begin{split}
P_\nu ^\mu  (\coth \xi ) \sim \; & \frac{\sin(\pi\mu)}{\pi} \e^{\mu\xi} \sum_{n = 0}^{\infty} a_n\! \left(\nu+\tfrac12\right) \Gamma \left( \mu-n\right) 
 \left( \frac{-\e^{ - \xi } }{\sinh \xi } \right)^n  \\
 & +C\left(\xi,-\mu-\tfrac12\right)\frac{\sin(\pi\nu)}{\pi}\e^{-\mu\xi}\sum_{n = 0}^{\infty} a_n\! \left(\nu+\tfrac12\right) \Gamma \left( \mu-n\right) 
 \left( \frac{\e^{\xi } }{\sinh \xi } \right)^n,
\end{split}
\end{gather}
\begin{gather}\label{formal5}
\begin{split}
 \e^{-\pi\im\mu}Q_\nu ^\mu  (\coth \xi ) \sim \; & \tfrac12 \e^{\mu\xi} \sum_{n = 0}^{\infty} a_n\! \left(\nu+\tfrac12\right) \Gamma \left( \mu-n\right) 
 \left( \frac{-\e^{ - \xi } }{\sinh \xi } \right)^n  \\
 & +C\left(\xi,-\nu-\tfrac12\right)\tfrac{1}{2}\e^{-\mu\xi}\sum_{n = 0}^{\infty} a_n\! \left(\nu+\tfrac12\right) \Gamma \left( \mu-n\right) 
 \left( \frac{\e^{\xi } }{\sinh \xi } \right)^n, 
\end{split}
\end{gather}
as $\Re \mu\to+\infty$, with $\Im \mu$ being bounded.

Examination of the remainder terms in the limiting case $\Re\xi\to 0$, $\Im\xi\neq 0$ (using Theorems \ref{thm1} and \ref{thm2}), shows that these inverse factorial expansions are actually valid on the boundary $\mp\im\xi\in \left[\arcsin \eps,\frac\pi2 \right)$ as well. Bounds for the error terms of these expansions follow from Theorems \ref{thm1} and \ref{thm2}, the details are left to the reader. Note that the function $\coth \xi$ is a biholomorphic bijection between the domains $\mathcal{D}_{\frac12}$ and $\{ z: \Re z>0, z\not\in(0,1]\}$, see Figure \ref{fig1}. In particular, the domain $\{ z: \Re z<0, \Im z\neq 0\}$ is not covered by the expansions \eqref{formal4} and \eqref{formal5}.

\begin{figure}[htbp]
\centering
\includegraphics[width=0.9\textwidth]{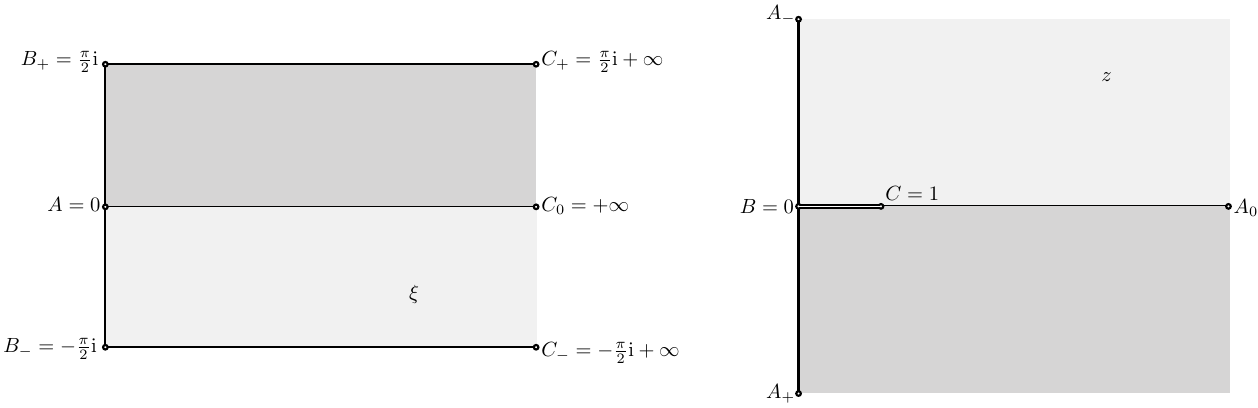}%
\caption{\rm The mapping $z=\coth\xi$ from $\mathcal{D}_{\frac12}$ to $\{ z: \Re z>0, z\not\in(0,1]\}$.
   The boundary $\mp\im\xi\in(0,\frac\pi2)$ is mapped to the imaginary axis $\pm\im z>0$.}
\label{fig1}
\end{figure}

In order to cover the whole Riemann sheet $\mathbb{C}\setminus \left(-\infty,1\right]$, we employ the analytic continuation formulae
\begin{align*}
P_\nu ^\mu \left(z\e^{\pm\pi\im}\right) & = \e^{\pm\pi\im\nu}P_\nu ^\mu (z)-\tfrac{2}{\pi}\sin(\pi(\nu+\mu))\e^{-\pi\im\mu}Q_\nu ^{\mu} (z),\\
Q_\nu ^\mu \left(z\e^{\pm\pi\im}\right) & = -\e^{\mp\pi\im\nu}Q_\nu ^\mu (z),
\end{align*}
(see, e.g., \cite[\href{http://dlmf.nist.gov/14.3.E10}{Eq.\ 14.3.10}]{NIST:DLMF}, \cite[\href{http://dlmf.nist.gov/14.24.E1}{Eq.\ 14.24.1}]{NIST:DLMF} and \cite[\href{http://dlmf.nist.gov/14.24.E2}{Eq.\ 14.24.2}]{NIST:DLMF}) together with the above inverse factorial expansions, to obtain
\begin{gather}\label{formal6}
\begin{split}
P_\nu ^\mu  \left(\e^{\pm\pi\im}\coth \xi \right) \sim \; & -C\left(\xi,\mu+\tfrac12\right)\frac{\sin(\pi\nu)}{\pi} \e^{\mu\xi} 
\sum_{n = 0}^{\infty} a_n\! \left(\nu+\tfrac12\right) \Gamma \left( \mu-n\right) 
 \left( \frac{-\e^{ - \xi } }{\sinh \xi } \right)^n \\
 &+\frac{\sin(\pi\mu)}{\pi}\e^{-\mu\xi}\sum_{n = 0}^{\infty} a_n\! \left(\nu+\tfrac12\right) \Gamma \left( \mu-n\right) 
 \left( \frac{\e^{\xi } }{\sinh \xi } \right)^n,
\end{split}
\end{gather}
\begin{gather}\label{formal7}
\begin{split}
 \e^{-\pi\im\mu}Q_\nu ^\mu  \left(\e^{\pm\pi\im}\coth \xi \right) \sim \; & -C\left(\xi,\nu+\tfrac12\right)\tfrac{1}{2} \e^{\mu\xi} 
 \sum_{n = 0}^{\infty} a_n\! \left(\nu+\tfrac12\right) \Gamma \left( \mu-n\right) 
 \left( \frac{-\e^{ - \xi } }{\sinh \xi } \right)^n \\
 & +\tfrac12 \e^{-\mu\xi}\sum_{n = 0}^{\infty} a_n\! \left(\nu+\tfrac12\right) \Gamma \left( \mu-n\right) 
 \left( \frac{\e^{\xi } }{\sinh \xi } \right)^n,
\end{split}
\end{gather}
as $\Re \mu\to+\infty$, with $\Im \mu$ being bounded and $\pm\Im\xi>0$. 
Again, construction of error bounds is possible and we leave the details to the interested reader. It is seen from Figure \ref{fig1}, for example, that in the case $\xi\in\mathcal{D}_{\frac12}$ and $\Im\xi>0$, we have $\arg \left(\e^{+\pi\im}\coth \xi \right)\in(\frac\pi2,\pi)$.

We note that if $\nu$ is an integer, then the expansions \eqref{formal4}--\eqref{formal7} terminate and represent the corresponding function exactly.

\subsection*{Ferrers functions for large \texorpdfstring{$\nu$}{nu} and fixed \texorpdfstring{$\mu$}{mu}.} Suppose that ($0<$) $\eps\leq\zeta\leq \pi-\eps$ ($<\pi$) and $\mu\in \mathbb{C}$ is bounded, and let
\begin{equation}\label{phasedef1}
\alpha_{\mu,\nu,n}= \left( \nu  - n + \tfrac{1}{2} \right)\zeta  + \left( \mu  + n - \tfrac{1}{2}\right)\tfrac{\pi }{2},\qquad
\beta_{\mu,\nu,n}= \left( \nu  + n + \tfrac{1}{2} \right)\zeta  + \left( \mu  + n - \tfrac{1}{2}\right)\tfrac{\pi }{2}.
\end{equation}
Then the Ferrers functions have the inverse factorial expansions
\begin{align}
\mathsf{P}_\nu ^\mu  (\cos \zeta )\sim \; &\sqrt{\frac{2}{\pi \sin \zeta }}\sum_{n = 0}^{\infty} a_n (\mu ) 
\frac{\Gamma \left( \nu  - n + \tfrac{1}{2} \right)}{\Gamma (\nu  - \mu  + 1)} \frac{\cos (\alpha_{\mu,\nu,n})}{\sin ^n \zeta} ,\label{formal8} \\
\mathsf{Q}_\nu ^\mu  (\cos \zeta ) \sim \; & -\sqrt{\frac{\pi}{2 \sin \zeta }} \sum_{n = 0}^{\infty} a_n (\mu )
\frac{\Gamma \left( \nu  - n + \tfrac{1}{2} \right)}{\Gamma (\nu  - \mu  + 1)}\frac{\sin (\alpha _{\mu ,\nu ,n} )}{\sin ^n \zeta },\label{formal9}
\end{align}
as $\Re\nu\to+\infty$, with $\Im \nu$ being bounded. These results follow from Theorem \ref{thm3}, where we give sharp and realistic error bounds for the truncated versions of the expansions.

Under the same assumptions, the Ferrers functions posses the factorial expansions
\begin{align}
\mathsf{P}_\nu ^\mu  (\cos\zeta)  \sim \; & \sqrt{\frac{2}{\pi\sin\zeta}}\label{formal10}
\sum_{n=0}^{\infty} a_n(\mu)\frac{\Gamma(\nu+\mu+1)}{\Gamma\left(\nu+\frac32+n\right)} \frac{\cos(\beta_{\mu,\nu,n})}{\sin^n\zeta}, \\
\mathsf{Q}_\nu ^\mu  (\cos\zeta) \sim \; & -\sqrt{\frac{\pi}{2\sin\zeta}}\label{formal11}
\sum_{n=0}^{\infty} a_n(\mu)\frac{\Gamma(\nu+\mu+1)}{\Gamma\left(\nu+\frac32+n\right)} \frac{\sin(\beta_{\mu,\nu,n})}{\sin^n\zeta},
\end{align}
as $|\nu|\to+\infty$ in the sector $|\arg\nu|\leq\pi-\delta$ ($<\pi$). These results follow from Theorems \ref{thm231} and \ref{thm23}, where we provide sharp and realistic error bounds for the truncated version of the expansions. Note that Theorem \ref{thm23} implies \eqref{formal10} and \eqref{formal11} for bounded values of $\mu$ satisfying $- \frac{1}{2} < \Re \mu$. This restriction can be removed by appealing to the connection formulae \eqref{pmurefl} and \eqref{qmurefl2}. We remark that the expansion \eqref{formal10} (without error estimates and conditions on $\zeta$, $\mu$ and $\nu$) is posed as an exercise in the book of Whittaker and Watson \cite[Exer.\ 38, p.\ 335]{WW1927} (see also \cite[p.\ 169]{Magnus1966}).

In the case that $\frac\pi6<\zeta<\frac{5\pi}{6}$, the infinite series on the right-hand sides of \eqref{formal10} and \eqref{formal11} converge to the Ferrers functions on the left-hand sides. Note that when $\frac\pi6<\zeta<\frac{5\pi}{6}$ and $\nu+\frac{1}{2}\notin \mathbb{Z}$, the infinite series in \eqref{formal8} and \eqref{formal9} are also convergent, however, their sums are not the Ferrers functions on the left-hand sides.

We mention that if $2\mu$ equals an odd integer, then the expansions \eqref{formal8}--\eqref{formal11} terminate and represent the corresponding 
function exactly. This generalises the cases $2\mu=\pm1$ given in \cite[\href{https://dlmf.nist.gov/14.5.iii}{\S14.5(iii)}]{NIST:DLMF}.

\subsection*{Ferrers functions for large \texorpdfstring{$\mu$}{mu} and fixed \texorpdfstring{$\nu$}{nu}.} Assume that ($0<$) $\eps\leq\zeta\leq \pi-\eps$ ($<\pi$) and $\nu\in \mathbb{C}$ is bounded. Then the Ferrers functions have the inverse factorial expansions
\begin{gather}\label{formal12}
\begin{split}
\mathsf{P}_{\nu} ^{\mu}  (\cos \zeta ) \sim \; & \frac{\sin(\pi\mu)}{\pi}\cot^\mu\left(\tfrac{1}{2}\zeta\right)\sum_{n=0}^\infty a_n\!\left(\nu+\tfrac12\right)\Gamma(\mu-n)
\left(2\sin^2\left(\tfrac{1}{2}\zeta\right)\right)^n\\
&  -\frac{\sin(\pi\nu)}{\pi}\tan^{\mu}\left(\tfrac{1}{2}\zeta\right)\sum_{n=0}^\infty a_n\!\left(\nu+\tfrac12\right)\Gamma(\mu-n)
\left(2\cos^2\left(\tfrac{1}{2}\zeta\right)\right)^n,
\end{split}
\end{gather}
\begin{gather}\label{formal13}
\begin{split}
\mathsf{Q}_{\nu} ^{\mu}  (\cos \zeta ) \sim \; & \frac{\cos(\pi\mu)}{2}\cot^\mu\left(\tfrac{1}{2}\zeta\right)\sum_{n=0}^\infty a_n\!\left(\nu+\tfrac12\right)\Gamma(\mu-n)
\left(2\sin^2\left(\tfrac{1}{2}\zeta\right)\right)^n\\
& - \frac{\cos(\pi\nu)}{2}\tan^\mu\left(\tfrac{1}{2}\zeta\right)\sum_{n=0}^\infty a_n\!\left(\nu+\tfrac12\right)\Gamma(\mu-n)
\left(2\cos^2\left(\tfrac{1}{2}\zeta\right)\right)^n,
\end{split}
\end{gather}
as $\Re \mu\to+\infty$, with $\Im \mu$ being bounded. The proof of these results is given in Section \ref{Ferrerslargemu}. Bounds for the error terms of these expansions follow from Theorems \ref{thm1} and \ref{thm2}, we leave the details to the reader.

We note that if $\nu$ is an integer, then the expansions \eqref{formal12} and \eqref{formal13} terminate and represent the corresponding function exactly.

If combined with Theorems \ref{thm22a} and \ref{thm22}, formula \eqref{PFerrersdef} yields the truncated version, together with error bounds, of the known factorial expansion of the Ferrers function $\mathsf{P}_{\nu} ^{-\mu} (x)$ (see \cite[\href{http://dlmf.nist.gov/14.15.E1}{Eq.\ 14.15.1}]{NIST:DLMF}). The analogous expansions for $\mathsf{P}_{\nu} ^{\mu} (x)$ and $\mathsf{Q}_{\nu} ^{\pm\mu} (x)$ may then be obtained by means of connection formulae, see \cite[\href{https://dlmf.nist.gov/14.15.i}{\S14.15(i)}]{NIST:DLMF}.

\subsection*{Gegenbauer function for large \texorpdfstring{$\nu$}{nu} and fixed \texorpdfstring{$\lambda$}{lambda}.} Suppose that $\xi\in\mathcal{D}_1$, $\left| \sinh \xi \right| \ge \eps$ ($>0$), ($0<$) $\eps\leq\zeta\leq \pi-\eps$ ($<\pi$) and $\lambda\in \mathbb{C}$ is bounded, and let
\begin{equation}\label{phasedef2}
K(\xi ,\lambda ) = \begin{cases} \cos (\pi \lambda ) & \text{if }\; \xi > 0, \\ \e^{ \pm \pi \im\lambda } & \text{if } \; 0<\pm \Im\xi  < \pi, \end{cases}
\qquad
\begin{array}{cc}
\gamma _{\lambda,\nu, n}  &= (\nu  + \lambda  - n)\zeta  - (\lambda  - n)\tfrac{\pi }{2},\\
\delta _{\lambda ,\nu ,n} & = (\nu  + \lambda  + n)\zeta  - (\lambda  - n)\tfrac{\pi}{2}.
\end{array}
\end{equation}
Then the Gegenbauer function has the inverse factorial expansions
\begin{gather}\label{formal14}
\begin{split}
C_\nu ^{(\lambda )} (\cosh \xi ) \sim \; & \frac{\e^{\xi (\nu  + \lambda )} }{\left(2\sinh \xi \right)^\lambda  }\sum_{n = 0}^{\infty} a_n\!\left( \lambda  - \tfrac{1}{2} \right)
\frac{\Gamma (\nu  + \lambda -n)}{\Gamma (\lambda )\Gamma (\nu+1)}\left( \frac{-\e^{ - \xi } }{\sinh \xi } \right)^n\\  
& + K(\xi ,\lambda )\frac{\e^{ - \xi (\nu  + \lambda )} }{\left(2\sinh \xi \right)^\lambda}  \sum_{n = 0}^{\infty} a_n\!\left( \lambda  - \tfrac{1}{2} \right)
\frac{\Gamma (\nu  + \lambda  - n)}{\Gamma (\lambda )\Gamma (\nu  + 1)}\left( \frac{\e^\xi }{\sinh \xi } \right)^n,
\end{split}
\end{gather}
\begin{gather}\label{formal15}
\begin{split}
C_\nu ^{(\lambda )} (\cos \zeta ) \sim \; & \frac{2}{\left(2\sin\zeta\right)^\lambda }\sum_{n = 0}^{\infty}a_n \!\left( \lambda  - \tfrac{1}{2} \right)
\frac{\Gamma (\nu  + \lambda  - n)}{\Gamma (\lambda )\Gamma (\nu  + 1)}\frac{\cos (\gamma _{\lambda,\nu, n} )}{\sin ^n \zeta },
\end{split}
\end{gather}
as $\Re\nu\to+\infty$, with $\Im \nu$ being bounded. These results are direct consequences of Corollaries \ref{gegen1} and \ref{gegen2}, where we provide sharp and realistic error bounds for the truncated versions of the expansions. If $\nu$ is a positive integer, $C_\nu ^{(\lambda )}(\cosh\xi)$ is a polynomial in $\cosh\xi$. Thus, we can extend \eqref{formal14} to the larger set $\widetilde {\mathcal{D}_1} =\left\{\xi : \Re\xi >0, -\pi < \Im \xi \leq\pi \right\}$ (provided $\left| \sinh \xi \right| \ge \eps$ ($>0$)). The function $\cosh\xi$ is a continuous bijection between $\widetilde {\mathcal{D}_1}$ and $\mathbb{C}\setminus \left[-1,1\right]$. We remark that in the special case that $\nu$ is a positive integer and $\lambda$ is real, the expansions \eqref{formal14} and \eqref{formal15} (without error estimates) were also given by Szeg\H{o} \cite[Theorem 8.21.10, pp.\ 196--197]{Szego1975}. If, in addition, $\lambda =\frac{1}{2}$, the expansions \eqref{formal14} and \eqref{formal15} reduce to the generalized Laplace--Heine asymptotic expansion and Darboux's asymptotic expansion for the Legendre polynomials \cite[Theorems 8.21.3 and 8.21.4, pp.\ 194--195]{Szego1975}. To our knowledge, no error bounds for these latter expansions have been given in the literature prior to this paper.

Under the same assumptions, the Gegenbauer function has the factorial expansion
\begin{equation}\label{formal16}
C_\nu ^{(\lambda )} (\cos \zeta ) \sim \frac{2}{\left(2\sin \zeta\right)^\lambda} \sum_{n = 0}^{\infty} a_n\! \left( \lambda  - \tfrac{1}{2} \right)
\frac{\Gamma (\nu  + 2\lambda )}{\Gamma (\lambda )\Gamma \left( \nu  + \lambda  + n + 1 \right)}\frac{\cos (\delta _{\lambda ,\nu ,n} )}{\sin ^n \zeta},
\end{equation}
as $|\nu|\to+\infty$ in the sector $|\arg\nu|\leq\pi-\delta$ ($<\pi$). This result follows from \eqref{eq66} and \eqref{formal10}. In Corollaries \ref{gegen3} and \ref{gegen4}, we give sharp and realistic error bounds for the truncated version of the expansion. We remark that in the special case that $\nu$ is a positive integer and $0<\lambda<1$, the expansion \eqref{formal16} (together with the error estimate \eqref{eq68} below) was also given by Szeg\H{o} \cite[Theorem 8.21.11, p.\ 197]{Szego1975}. If, in addition, $\lambda =\frac{1}{2}$, the expansion reduces to Stieltjes' asymptotic expansion for the Legendre polynomials \cite[Theorem 8.21.5, p.\ 195]{Szego1975}.

In the case that $\frac\pi6<\zeta<\frac{5\pi}{6}$, the infinite series on the right-hand side of \eqref{formal16} converge to the Gegenbauer function on the left-hand side. Note that when $\frac\pi6<\zeta<\frac{5\pi}{6}$ and $\nu+\lambda\notin \mathbb{Z}$, the infinite series in \eqref{formal15} is also convergent, however, its sum is not the Gegenbauer function on the left-hand side.

We mention that if $\lambda$ is an integer, then the expansions \eqref{formal14}--\eqref{formal16} terminate and represent the corresponding function exactly.

\subsection*{The cases in which \texorpdfstring{$-\nu$}{-nu} or \texorpdfstring{$-\mu$}{-mu} is large.} The asymptotic expansions we gave in this section are valid either in half-planes or in sectors not including the negative real axis. For the reader's convenience, we reproduce here a series of known connection relations satisfied by the Legendre functions and the Ferrers functions. By combining the above asymptotic expansions with these connection relations, it is possible to cover larger regions in the parameter spaces. According to \cite[\href{http://dlmf.nist.gov/14.9.iii}{\S 14.9(iii)}]{NIST:DLMF} and \cite[\href{http://dlmf.nist.gov/14.3.E10}{Eq.\ 14.3.10}]{NIST:DLMF}, 
\begin{align}
P_{-\nu} ^\mu  (z) & =  P_{\nu-1} ^\mu  (z ), \label{eqpref} \\
Q_{-\nu} ^\mu  (z) & =  -\e^{\pi\im\mu}\cos(\pi\nu)\Gamma(\nu+\mu)\Gamma(\mu-\nu+1)P_{\nu-1} ^{-\mu}  (z )+Q_{\nu-1} ^\mu  (z ), \label{eqqref} \\
P_\nu ^{-\mu}  (z ) & = \frac{\Gamma(\nu-\mu+1)}{\Gamma(\nu+\mu+1)}\left(P_\nu ^\mu  (z )-\tfrac{2}{\pi}\e^{- \pi\im\mu}\sin(\pi\mu)Q_\nu ^\mu  (z )\right), \nonumber \\ 
Q_\nu ^{-\mu}  (z ) & = \e^{-2 \pi\im\mu} \frac{\Gamma(\nu-\mu+1)}{\Gamma(\nu+\mu+1)} Q_\nu ^\mu  (z ). \label{qmurefl}
\end{align}
Similarly, according to \cite[\href{http://dlmf.nist.gov/14.9.i}{\S 14.9(i)}]{NIST:DLMF},
\begin{align}
\mathsf{P}_{ - \nu }^\mu  (x) & = \mathsf{P}_{\nu  - 1}^\mu  (x), \nonumber \\
\sin (\pi (\nu  - \mu ))\mathsf{Q}_{ - \nu }^\mu  (x) & =  - \pi \cos (\pi \nu )\cos (\pi \mu )\mathsf{P}_{\nu  - 1}^\mu  (x) + \sin (\pi (\nu  + \mu ))\mathsf{Q}_{\nu  - 1}^\mu  (x),\nonumber \\
\mathsf{P}_\nu ^{ - \mu } (x) & = \frac{\Gamma (\nu  - \mu  + 1)}{\Gamma (\nu  + \mu  + 1)}\left( \cos (\pi \mu )\mathsf{P}_\nu ^\mu  (x) - \tfrac{2}{\pi }\sin (\pi \mu )\mathsf{Q}_\nu ^\mu  (x) \right),\label{pmurefl} \\
\mathsf{Q}_\nu ^{ - \mu } (x) & = \frac{\Gamma (\nu  - \mu  + 1)}{\Gamma (\nu  + \mu  + 1)}\left( \tfrac{\pi }{2}\sin (\pi \mu )\mathsf{P}_\nu ^\mu  (x) + \cos (\pi \mu )\mathsf{Q}_\nu ^\mu  (x) \right). \label{qmurefl2}
\end{align}

\subsection*{Earlier asymptotic results.} We close this section by providing a brief summary of existing asymptotic results in the literature. Thorne \cite{Thorne1957} obtained uniform asymptotic expansions for $P_\nu ^\mu  (z)$ and $Q_\nu ^\mu  (z)$ with the assumptions that $z$ is complex, $\mu$ is a negative integer, $\nu$ is a large positive integer and $0<-\mu/\left(\nu+\frac{1}{2}\right)<1$ is fixed. Olver \cite{Olver1975a} derived uniform expansions for $\mathsf{P}_\nu ^\mu (x)$ and $\mathsf{Q}_\nu ^\mu  (x)$ under the conditions that $0 \leq x <1$, $\mu$ is real and negative, $\nu$ is a large positive real number, and $0<-\mu/\left(\nu+\frac{1}{2}\right)\leq 1$ is fixed. He also treated the case when $\mu$ and $\nu+\frac{1}{2}$ are both purely imaginary. Olver \cite[\S12 and \S13]{Olv97} also gave uniform asymptotic expansions with error bounds for $P_\nu ^\mu  (z)$, $Q_\nu ^\mu  (z)$, $\mathsf{P}_\nu ^\mu (x)$ and $\mathsf{Q}_\nu ^\mu  (x)$ with the assumptions that $z$ is complex, $-1< x <1$, $\mu$ is real and fixed and $\nu$ is a large positive real number (see also \cite{Ursell1984}). Similar results were obtained by Shivakumar and Wong \cite{Wong1988} for $P_\nu ^\mu  (z)$ and by Frenzen \cite{Frenzen1990} for $Q_\nu ^\mu (z)$ under the conditions that $z\geq 1$, $\mu<\frac{1}{2}$ is bounded and $\nu$ is a large positive real number. Their expansions are also supplied with computable error bounds. Dunster \cite{Dunster2003} completed the works of Thorne and Olver by deriving uniform asymptotic expansions for the Legendre and Ferrers functions with the assumptions that $\nu+\frac{1}{2}$ is real and positive, $\mu$ is a large negative real number and $-\mu/\left(\nu+\frac{1}{2}\right)>1$. Recently, Cohl et al. \cite{Cohl2018} gave  uniform asymptotic approximations for the Legendre and Ferrers functions under the conditions that $\mu$ is real and bounded, $\nu$ is large and is either real and positive or $\nu+\frac{1}{2}$ is purely imaginary.

\section{Hypergeometric function for large \texorpdfstring{$c$}{c} and fixed \texorpdfstring{$a$}{a}, \texorpdfstring{$b$}{b}: error bounds}\label{SectHypergeometric}

In the following two theorems, we provide computable error bounds for the hypergeometric series. The results in Theorem \ref{thm22a} are especially useful when $\Re c \to+\infty$ and $\Im c$ is bounded, whereas that in Theorem \ref{thm22} is useful when $|c|\to+\infty$ in the larger domain $|\arg c|\leq\pi-\delta$ ($<\pi$).

\begin{theorem}\label{thm22a} Let $N$ be an arbitrary non-negative integer. Let $a$, $b$ and $c$ be complex numbers satisfying $N > \max(-\Re a,-\Re b)$, $\Re c > \Re b$ and $c \notin \mathbb{Z}_{ \leq 0}$. Then
\begin{equation}\label{hypergeomexp}
\hyperF{a}{b}{c}{z}=\Gamma(c)\hyperOlverF{a}{b}{c}{z}=\sum_{n=0}^{N-1}\frac{\left(a\right)_n \left(b\right)_n}{\left(c\right)_n n!} z^n
+R_N^{(F)} (z,a,b,c),
\end{equation}
where the remainder term satisfies the estimate
\begin{equation}\label{est1}
\left| R_N^{(F)} (z,a,b,c) \right| \le \left|\frac{ \Gamma (\Re b + N)\Gamma (c + N)\Gamma (\Re c - \Re b)}{ \Gamma (b + N)\Gamma (\Re c + N)\Gamma (c - b)}\right| \left|\frac{\left(a\right)_N \left(b\right)_N}{\left(c\right)_N N!} z^N\right| \max (1,\e^{(\Im a)\arg (-z)} ),
\end{equation}
provided $\Re z\leq 0$ and with the convention $\left| \arg ( - z)\right| \le \frac{\pi }{2}$. With the extra assumption $\Re a<1$, we also have
\begin{gather}\label{est2}
\begin{split}
\left| R_N^{(F)} (z,a,b,c) \right| \le \; &\left|\frac{ \sin (\pi a) }{ \sin (\pi \Re a)}\right|  \left| \frac{\Gamma (\Re b)\Gamma (c)\Gamma (\Re c - \Re b)}{\Gamma (b)\Gamma (\Re c)\Gamma (c - b)} \right| \\ & \times \left| \frac{(\Re a)_N (\Re b)_N }{(\Re c)_N N!} z^N\right| \times \begin{cases} 1  & \text{ if } \;
\Re z \le  0, \\ \left|\frac{z}{\Im z}\right| & \text{ if } \;
0<\Re z \le \left| z \right|^2, \\  \frac{1}{|1-z|} & \text{ if } \;  \Re z > \left| z \right|^2,\end{cases}
\end{split}
\end{gather}
provided $z\in \mathbb{C}\setminus \left[1,+\infty\right)$. If $\Re a $ is an integer or $\Re b$ is a non-positive integer, then the limiting value has to be taken in this bound. In addition, the remainder term $R_N^{(F)} (z,a,b,c)$ does not exceed the corresponding first neglected term in absolute value and has the same sign provided that $z$ is negative, $a$, $b$ and $c$ are real, and $N > \max(-a,-b)$, $c>b$ and $c \notin \mathbb{Z}_{ \leq 0}$.
\end{theorem}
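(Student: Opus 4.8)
The plan is to represent the remainder $R_N^{(F)}(z,a,b,c)$ as an integral and bound the integrand, reducing everything to Beta integrals. First I would insert the Beta-integral identity $(b)_n/(c)_n=\frac{\Gamma(c)}{\Gamma(b)\Gamma(c-b)}\int_0^1 t^{b+n-1}(1-t)^{c-b-1}\,\d t$ (valid since $\Re(c-b)>0$ and $\Re(b+n)>0$ for $n\ge N$) term by term into the tail $\sum_{n\ge N}$ and interchange summation and integration to obtain the master representation
\[
R_N^{(F)}(z,a,b,c)=\frac{\Gamma(c)}{\Gamma(b)\Gamma(c-b)}\int_0^1 t^{b-1}(1-t)^{c-b-1}\rho_N(zt)\,\d t,
\]
where $\rho_N(w)=(1-w)^{-a}-\sum_{n=0}^{N-1}\frac{(a)_n}{n!}w^n$ is the remainder of the binomial series. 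Because $\rho_N(w)=\O(w^N)$, the integrand behaves like $t^{b+N-1}$ near $t=0$, so the conditions $\Re c>\Re b$ and $N>-\Re b$ ensure convergence. Both estimates then follow from this one representation by feeding in two different representations of $\rho_N$.

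For \eqref{est1} I would use the Taylor-remainder form $\rho_N(w)=\frac{(a)_N}{(N-1)!}w^N\int_0^1(1-s)^{N-1}(1-ws)^{-a-N}\,\d s$, producing a double integral. Taking moduli, the hypothesis $\Re z\le 0$ gives $\Re(1-zts)\ge 1$, hence $|1-zts|\ge 1$, and since $N>-\Re a$ the factor $|1-zts|^{-\Re a-N}\le 1$. The only delicate point is the phase: writing $|(1-zts)^{-a-N}|=|1-zts|^{-\Re a-N}\e^{(\Im a)\arg(1-zts)}$, I would invoke the elementary geometric fact that adding the positive number $1$ to the right-half-plane point $-zts$ moves its argument toward $0$, so $\arg(1-zts)$ has the same sign as $\arg(-z)$ with $|\arg(1-zts)|\le|\arg(-z)|$; this yields $\e^{(\Im a)\arg(1-zts)}\le\max(1,\e^{(\Im a)\arg(-z)})$. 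The residual $s$- and $t$-integrals are Beta integrals equal to $1/N$ and $\frac{\Gamma(\Re b+N)\Gamma(\Re c-\Re b)}{\Gamma(\Re c+N)}$, and a rearrangement using $(b)_N/(c)_N=\frac{\Gamma(b+N)\Gamma(c)}{\Gamma(b)\Gamma(c+N)}$ recovers \eqref{est1} exactly.

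For \eqref{est2} the factor $(1-ws)^{-a-N}$ is too singular once $\Re z>0$, so I would instead represent $\rho_N$ by collapsing the Cauchy kernel $\frac{1}{2\pi\im}\oint(1-u)^{-a}\frac{w^N}{u^N(u-w)}\,\d u$ (over a contour enclosing $0$ and $w$ but not the cut, which equals $\rho_N(w)$ by residues) onto the branch cut $[1,\infty)$; the jump of $(1-u)^{-a}$ supplies a $\sin(\pi a)$ and a milder singularity, giving $\rho_N(w)=\frac{\sin(\pi a)}{\pi}w^N\int_1^\infty(x-1)^{-a}\frac{\d x}{x^N(x-w)}$ (the deformation and the vanishing arc at infinity both need $\Re a+N>0$). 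Substituting $w=zt$ and bounding $|x-zt|=x\,|1-z(t/x)|\ge x\min_{\tau\in[0,1]}|1-z\tau|$ lets the minimum distance factor out, and a short computation shows $1/\min_{\tau\in[0,1]}|1-z\tau|$ is exactly the piecewise expression in \eqref{est2}, the three cases $\Re z\le0$, $0<\Re z\le|z|^2$, $\Re z>|z|^2$ corresponding to the minimiser sitting at $\tau=0$, in the interior, or at $\tau=1$. The integrals then separate into $\int_0^1 t^{\Re b+N-1}(1-t)^{\Re c-\Re b-1}\,\d t$ and $\int_1^\infty(x-1)^{-\Re a}x^{-N-1}\,\d x=\frac{\Gamma(\Re a+N)\Gamma(1-\Re a)}{N!}$, the latter converging precisely because of the extra assumption $\Re a<1$; the reflection formula converts the $|\sin(\pi a)|$ from the jump together with $\Gamma(1-\Re a)$ into $|\sin(\pi a)/\sin(\pi\Re a)|$, and assembling the Gamma factors yields \eqref{est2}. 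I expect this step to be the main obstacle: discovering the cut representation with its mild singularity, and then carefully handling the signs of $\Gamma(\Re a)$, $(\Re a)_N$, $(\Re b)_N$ when $\Re a$ or $\Re b$ may be negative, where the stated limiting convention must be used.

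Finally, the sign-and-realness statement follows from the first (double-integral) representation specialised to real $a,b,c$ and negative $z$. There $1-zts\ge 1$ and every factor in the integrand is positive, so replacing $(1-zts)^{-a-N}$ by its value $1$ at $s=0$ overestimates the integrand and reproduces exactly the first neglected term $\frac{(a)_N(b)_N}{(c)_N N!}z^N$; since $a+N>0$ guarantees $0<(1-zts)^{-a-N}\le1$, the remainder $R_N^{(F)}$ has the same sign as that term and is dominated by it in modulus.
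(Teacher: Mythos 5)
Your proposal is correct, and most of it coincides with the paper's own proof. The bound \eqref{est1} is obtained there exactly as you describe: Euler's integral plus the Taylor integral remainder for $(1-zt)^{-a}$ (this is the paper's formula \eqref{remainder}), the estimate $|1-ztu|\ge 1$ for $\Re z\le 0$, the same phase argument producing $\max(1,\e^{(\Im a)\arg(-z)})$, and the same Beta evaluations; the sign/monotonicity statement is likewise proved from positivity and $0<(1-ztu)^{-a-N}<1$, phrased in the paper via the mean value theorem of integration. The one genuinely different step is how you produce the kernel for \eqref{est2}. The paper starts from the Beta-integral formula \eqref{coeff} for $(a)_n/n!$ (valid for $\Re a<1$, $n>-\Re a$), sums the tail and continues analytically, arriving at a remainder with kernel $\tan^{2a-1}u\,\sin^{2N}u/(1-zt\sin^2 u)$; you instead write the binomial remainder as a Cauchy integral and collapse the contour onto the cut $[1,+\infty)$, arriving at the kernel $(x-1)^{-a}x^{-N}(x-zt)^{-1}$. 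Under the substitution $\sin^2 u=1/x$ these are literally the same representation, and everything downstream — the three-case lower bound for $|1-z\tau|$ on $\tau\in[0,1]$, the separation into Beta integrals, and the reflection formula producing $|\sin(\pi a)/\sin(\pi\Re a)|$ — is identical in the two arguments. What the paper's route buys is that the $u$-integral is evaluated by applying the same identity \eqref{coeff} with $\Re a$ in place of $a$, which keeps the combination $(\Re a)_N/\sin(\pi\Re a)$ manifestly positive and so sidesteps the sign bookkeeping that you correctly flag as the delicate point of your assembly; what your route buys is a more transparent origin for the factor $\sin(\pi a)$ and the mild singularity, at the cost of justifying the contour deformation (the vanishing arc needs $\Re a+N>0$, integrability at $x=1$ needs $\Re a<1$) and separately evaluating $\int_1^\infty(x-1)^{-\Re a}x^{-N-1}\,\d x$ — all of which you do correctly.
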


\begin{theorem}\label{thm22} Let $z$ be a complex number such that $\Re z\leq\frac12$, and $N$ be an arbitrary non-negative integer. Let $a$, $b$ and $c$ be complex numbers satisfying $\Re a>0$, $N >  - \Re b$, $|\arg(c-b)|<\pi$ and $c \notin \mathbb{Z}_{ \leq 0}$. Then the expansion \eqref{hypergeomexp} holds with the remainder estimate
\[
\left| R_N^{(F)} (z,a,b,c) \right| \le A_N \left|\frac{\left(a\right)_N \left(b\right)_N}{\left(c\right)_N N!} z^N\right|.
\]
Here
\[
A_N = A_N(z,a,b,c,\sigma) = \left|\frac{N}{a}\right|
+\left|\frac{\Gamma(\Re b+N)\Gamma(c+N)}{\Gamma(b+N)\Gamma(c-b)}
\frac{\e^{\pi\left|\Im a\right|+|\sigma|\left|\Im b\right|}\left(1+\frac{N}{a}\right)K^{\Re a}}{\left((c-b)\cos(\theta+\sigma)\right)^{\Re b+N}}
\right|,
\]
in which $\theta=\arg(c-b)$, $|\theta+\sigma|<\frac{\pi}{2}$, $|\sigma|\leq\frac{\pi}{2}$ and 
\[
K = K(z,\sigma) = \min\left(\frac{2}{\cos\sigma},\max\left(1,\frac{\sqrt{\left(\Im z\right)^2+\frac14}}{\frac12-\Re z}\right)\right).
\]
In the case that $|\theta|<\frac{\pi}{2}$ we take $\theta+\sigma=0$, that is, $\cos(\theta+\sigma)=1$. Note that $A_N=\O(1)$ as $|c|\to+\infty$.
\end{theorem}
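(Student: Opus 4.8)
The plan is to turn $R_N^{(F)}(z,a,b,c)$ into an explicit integral whose integrand is the remainder of the elementary binomial series, and then to estimate that integral along a ray rotated to match $\arg(c-b)$. First I would record the Beta-type identity
\[
\frac{(b)_n}{(c)_n}=\frac{\Gamma(c)}{\Gamma(b)\Gamma(c-b)}\int_0^{+\infty}\left(1-\e^{-s}\right)^{b+n-1}\e^{-(c-b)s}\,\d s,
\]
valid initially when $\Re(c-b)>0$ and $\Re(b+n)>0$, and substitute it into the series $R_N^{(F)}=\sum_{n\ge N}\frac{(a)_n(b)_n}{(c)_n n!}z^n$. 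Summing the binomial tail in closed form gives
\[
R_N^{(F)}(z,a,b,c)=\frac{\Gamma(c)}{\Gamma(b)\Gamma(c-b)}\int_0^{+\infty}\e^{-(c-b)s}\left(1-\e^{-s}\right)^{b-1}r_N\!\left(z\left(1-\e^{-s}\right),a\right)\d s,
\]
where $r_N(w,a):=(1-w)^{-a}-\sum_{n=0}^{N-1}\frac{(a)_n}{n!}w^n$ is the binomial-series remainder. I would establish this first for small $|z|$, where every series converges absolutely, and then continue it analytically to $\Re z\le\tfrac12$, $z\notin[1,+\infty)$, so that the argument $z(1-\e^{-s})$ never meets the cut. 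The hypothesis $N>-\Re b$ secures convergence at $s=0$ and $\Re(c-b)>0$ at $s=+\infty$; to reach the whole range $|\arg(c-b)|<\pi$ I would tilt the ray to $\arg s=\sigma$ by Cauchy's theorem, the requirement $|\theta+\sigma|<\tfrac\pi2$ being exactly what keeps $\e^{-(c-b)s}$ decaying.

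Second, I would put the inner factor into a form adapted to $\Re z\le\tfrac12$. A Pfaff transformation gives
\[
r_N(w,a)=\frac{(a)_N}{N!}\,\frac{w^N}{1-w}\,\hhyperF{1}{1-a}{N+1}{\frac{w}{w-1}},
\]
and the decisive observation is that $\Re w\le\tfrac12$ (which follows from $\Re z\le\tfrac12$ since $1-\e^{-s}\in(0,1)$ on the positive axis) is equivalent to $\left|w/(w-1)\right|\le 1$, so the transformed argument stays in the closed unit disc. Representing this $\hhyperF{1}{1-a}{N+1}{\cdot}$ by its Euler integral $N\int_0^1(1-t)^{N-1}(1-vt)^{a-1}\,\d t$, where $v=w/(w-1)$, the hypothesis $\Re a>0$ is precisely what makes the integrand locally integrable at a potential singularity. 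A single integration by parts then converts the factor $(1-vt)^{a-1}$ into $(1-vt)^{a}$ and produces a boundary term carrying the factor $N/a$; since $\Re a>0$ the residual factor $(1-vt)^{a}$ is bounded by a $K^{\Re a}$-type quantity, with $K$ controlling $\left|1-vt\right|$ over the relevant range. This is the mechanism I expect to generate the two summands of $A_N$: the $|N/a|$ term from the boundary part and the Gamma-ratio term from the residual.

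Third, I would reassemble. On the tilted ray one has $\left|1-\e^{-s}\right|\le|s|$, and evaluating the resulting monomial times $\e^{-(c-b)s}$ by the Gamma integral $\int_0^{+\infty}\rho^{\Re b+N-1}\e^{-|c-b|\cos(\theta+\sigma)\rho}\,\d\rho=\Gamma(\Re b+N)\big/\big(|c-b|\cos(\theta+\sigma)\big)^{\Re b+N}$ accounts, after the prefactor $(b)_N/(c)_N$ buried in the first neglected term is restored, for the quotient $\Gamma(\Re b+N)\Gamma(c+N)\big/\big(\Gamma(b+N)\left((c-b)\cos(\theta+\sigma)\right)^{\Re b+N}\big)$. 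The two exponential constants come from tracking phases: $\e^{|\sigma|\left|\Im b\right|}$ from $\left(1-\e^{-s}\right)^{b-1}=\left|1-\e^{-s}\right|^{\Re b-1}\e^{-\Im b\,\arg(1-\e^{-s})}$ along the tilted contour, and $\e^{\pi\left|\Im a\right|}$ from the phase of the binomial factor $(1-vt)^{a}$. A concluding Stirling estimate of the Gamma quotient confirms the claim $A_N=\O(1)$ as $|c|\to+\infty$.

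The main obstacle is not any individual identity but the \emph{uniformity}: producing a bound that is simultaneously explicit, normalised against the first neglected term, and $\O(1)$ in $c$ throughout the sector $|\arg(c-b)|<\pi$. Concretely, the delicate points are the simultaneous justification of the contour rotation and the analytic continuation in $z$ up to the boundary $\Re z=\tfrac12$ — where $\left|w/(w-1)\right|$ can equal $1$ and $1-vt$ may approach $0$, which is exactly what the three cases in the definition of $K$ are designed to absorb — and the integration-by-parts bookkeeping itself. The bracketed residual $1-(N-1)\int_0^1(1-t)^{N-2}(1-vt)^{a}\,\d t$ vanishes at $v=0$, so the boundary piece and the residual integral cannot be bounded in isolation; they must be recombined so that the spurious large-$c$ growth cancels and each of the two final summands is genuinely $\O(1)$. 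Keeping the factor $\left(1-\e^{-s}\right)^{N}$ paired with the Gamma integral, rather than bounding it crudely, is what prevents an unwanted $K^{N}$ from appearing and is the crux of the estimate.
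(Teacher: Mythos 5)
Your route is in essence the paper's own: the same Laplace-type representation of $R_N^{(F)}$ as an integral over $(0,+\infty)$ against $\e^{-(c-b)s}\left(1-\e^{-s}\right)^{b-1}$ (the paper reaches it from Euler's integral via $t=1-\e^{-\tau}$, you via the Beta identity applied term by term), the same rotation $s=t\e^{\im\sigma}$ with $|\theta+\sigma|<\frac{\pi}{2}$ to cover $|\arg(c-b)|<\pi$, the same phase estimates producing $\e^{\pi|\Im a|+|\sigma||\Im b|}$, and the same use of $\Re a>0$ together with $K$. The genuine gap sits at the step you yourself call the crux. Carrying out your integration by parts on $r_N$ gives, with $v=w/(w-1)$ so that $w/\left((1-w)v\right)=-1$,
\[
r_N(w,a)=-\frac{N}{a}\,\frac{(a)_N}{N!}\,w^{N-1}\left(1-(N-1)\int_0^1(1-t)^{N-2}(1-vt)^{a}\,\d t\right),
\]
so \emph{both} the boundary piece and the residual piece carry $w^{N-1}$, not $w^{N}$. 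Estimating them separately inside the $s$-integral yields, for each, a factor $\Gamma(\Re b+N-1)\big/\left(|c-b|\cos(\theta+\sigma)\right)^{\Re b+N-1}$, and since $\left|\Gamma(c+N)/\Gamma(c-b)\right|\asymp|c|^{\Re b+N}$, the ratio of each such bound to the first neglected term grows like $|c|$; worse, when $-\Re b<N\le 1-\Re b$ (allowed by the hypotheses) the two separate $s$-integrals diverge at $s=0$, each integrand behaving like $s^{\Re b+N-2}$ there. So the decomposition as you set it up cannot be bounded piece by piece, and your closing remark that the pieces ``must be recombined so that the spurious large-$c$ growth cancels'' names the obstruction without removing it: no recombination mechanism is supplied, and that mechanism is exactly where the stated form of $A_N$, in particular $A_N=\O(1)$, must come from.

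The missing idea is to shift the index \emph{before} decomposing: write $r_N(w,a)=\frac{(a)_N}{N!}w^N+r_{N+1}(w,a)$ and apply your Pfaff--Euler--parts identity to $r_{N+1}$ rather than to $r_N$. This produces
\[
r_N(w,a)=-\frac{N}{a}\,\frac{(a)_N}{N!}\,w^N+\frac{(a+1)_N}{N!}\,w^N\,N\int_0^1\tau^{N-1}\left(\frac{1-\tau w}{1-w}\right)^{a}\d\tau,
\]
in which both summands carry the full power $w^N$: the first integrates exactly to $-\frac{N}{a}$ times the first neglected term of \eqref{hypergeomexp} (the $|N/a|$ in $A_N$), while in the second the $\tau$-integral is bounded by $K^{\Re a}\e^{\pi|\Im a|}$ uniformly along the rotated ray, so the $s$-integration yields $\Gamma(\Re b+N)\big/\left((c-b)\cos(\theta+\sigma)\right)^{\Re b+N}$ and hence the Gamma-ratio summand of $A_N$. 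This is precisely the paper's argument: there the shifted identity appears as the contiguous relation $\hyperF{a+N+1}{1}{N+2}{w}=\frac{N+1}{aw}\left(\left(1-w\right)^{-a}\hyperF{-a}{N}{N+1}{w}-1\right)$, applied after writing $R_N^{(F)}=\frac{(a)_N(b)_N}{(c)_N N!}z^N+R_{N+1}^{(F)}$. A secondary, smaller gap: you assert the three cases of $K$ ``absorb'' the boundary behaviour, but on the rotated ray the inequality $\Re\left(z\left(1-\e^{-t\e^{\im\sigma}}\right)\right)\le\frac12$ is no longer automatic, and the bound $\left|1-z\left(1-\e^{-t\e^{\im\sigma}}\right)\right|^{-1}\le K$ requires the two separate minimization arguments (one for $\Re z<\frac12$, one for $|\sigma|<\frac{\pi}{2}$) that the paper carries out in detail; this verification is not routine and cannot be omitted.
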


\section{Legendre functions for large \texorpdfstring{$\nu$}{nu} and fixed \texorpdfstring{$\mu$}{mu}: error bounds}\label{SectLegendre}

In the following two theorems, we provide computable error bounds for the inverse factorial expansions \eqref{formal1} and \eqref{formal2} of the associated Legendre functions $P_\nu ^\mu  (z)$ and $Q_\nu ^\mu  (z)$, respectively. For the definition of the domain $\mathcal{D}_1$, see \eqref{RegionD}.

\begin{theorem}\label{thm1} Let $\xi \in \mathcal{D}_1$ and $N$, $M$ be arbitrary non-negative integers. Let $\mu$ and $\nu$ be complex numbers satisfying $\left|\Re\mu \right| <\min \left( N + \frac{1}{2},M + \frac{1}{2} \right)$ and $\Re\nu  > \max \left( N - \frac{1}{2},M - \frac{1}{2} \right)$. Then
\begin{gather}\label{eq3}
\begin{split}
 P_\nu ^\mu  (\cosh \xi ) =\; & \frac{\e^{ \left(\nu+\frac{1}{2} \right)\xi}}{\Gamma (\nu  - \mu  + 1)\sqrt{2\pi \sinh \xi}} \left( \sum_{n = 0}^{N - 1} a_n (\mu ) 
\Gamma \left( \nu  - n + \tfrac{1}{2} \right) \left( \frac{-\e^{ - \xi } }{\sinh \xi } \right)^n   + R_N^{(P1)} (\xi ,\mu ,\nu ) \right)
\\ & + \frac{C(\xi ,\mu ) \e^{- \left(\nu+\frac{1}{2} \right)\xi}}{\Gamma (\nu  - \mu  + 1)\sqrt{2\pi \sinh \xi}} \left( \sum_{m = 0}^{M - 1} a_m (\mu )  
\Gamma \left( \nu  - m + \tfrac{1}{2} \right)\left( \frac{\e^\xi }{\sinh \xi} \right)^m  + R_M^{(P2)} (\xi ,\mu ,\nu ) \right),
\end{split}
\end{gather}
where $C(\xi ,\mu )$ is defined in \eqref{eq6}, and the remainder terms satisfy the estimates
\begin{align*}
\left| R_N^{(P1)} (\xi ,\mu ,\nu ) \right| \le \; & \left|\frac{ \cos (\pi \mu ) }{ \cos (\pi \Re\mu ) }\right|\left| a_N (\Re\mu ) \right|  
\Gamma \left( \Re\nu  - N + \tfrac{1}{2} \right) \left| \frac{\e^{ - \xi } }{\sinh \xi } \right|^N \\ & \times \begin{cases} 1 & \text{if }\; \Re (\e^{2\xi } ) \le 1, \\ 
\min\big(\left| 1 - \e^{ - 2\xi } \right|\left| \csc (2\Im\xi ) \right|,1 + \chi\big(N+\tfrac{1}{2}\big)\big) & \text{if } \; \Re (\e^{2\xi } ) > 1, \end{cases}
\end{align*}
and
\[
\left| R_M^{(P2)} (\xi ,\mu ,\nu ) \right| \le \left|\frac{\cos (\pi \mu )}{\cos (\pi \Re\mu )}\right|\left|a_M (\Re\mu )\right|
\Gamma \left( \Re\nu  - M + \tfrac{1}{2} \right)\left| \frac{\e^\xi}{\sinh \xi } \right|^M .
\]
If $2\Re\mu $ is an odd integer, then the limiting values have to be taken in these bounds. The square roots are defined to be positive for positive real $\xi$ and are defined by continuity elsewhere. In addition, the remainder term $R_M^{(P2)} (\xi ,\mu ,\nu )$ does not exceed the corresponding first neglected term in absolute value and has the same sign provided that $\xi$ is positive, $\mu$ and $\nu$ are real, and $\left| \mu \right| <M + \frac{1}{2}$ and $\nu > M - \frac{1}{2}$.
\end{theorem}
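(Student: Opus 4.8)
The plan is to exploit the Bessel integral representation \eqref{QIntRepr1}, whose essential feature is that the factor $t^\nu$ concentrates the mass of the integral at large $t$, where the modified Bessel function is faithfully described by its classical large-argument expansion. The workhorse will therefore be an error-bounded form of that expansion,
\[
K_\mu(w)=\sqrt{\frac{\pi}{2w}}\,\e^{-w}\left(\sum_{n=0}^{N-1}a_n(\mu)w^{-n}+R_N^{(K)}(w,\mu)\right),
\]
in which (after Olver) $R_N^{(K)}$ is controlled by $\bigl|\tfrac{\cos(\pi\mu)}{\cos(\pi\Re\mu)}\bigr|\,\bigl|a_N(\Re\mu)\bigr|\,|w|^{-N}$ times a factor depending only on $\arg w$. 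The ratio of cosines is precisely what converts the bound from $a_N(\mu)$ to the real-parameter quantity $a_N(\Re\mu)$ appearing in \eqref{eq3}, and the hypotheses $|\Re\mu|<\min(N+\tfrac12,M+\tfrac12)$, $\Re\nu>\max(N-\tfrac12,M-\tfrac12)$ are exactly those ensuring absolute convergence of every integral below.

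I would first dispose of the recessive constituent, namely $R_M^{(P2)}$. Inserting the expansion of $K_\mu(t\sinh\xi)$ into \eqref{QIntRepr1} and using $\cosh\xi+\sinh\xi=\e^\xi$, each main term collapses to a Gamma integral,
\[
\int_0^{+\infty}t^{\nu-m-1/2}\e^{-t\e^\xi}\,\d t=\Gamma\!\left(\nu-m+\tfrac12\right)\e^{-\left(\nu+\frac12\right)\xi}\e^{m\xi},
\]
which reproduces the series of \eqref{formal2}, that is, the recessive half of \eqref{eq3}. The remainder is the same integral with $R_M^{(K)}(t\sinh\xi,\mu)$ inserted; since the $\e^{-t\sinh\xi}$ of the Bessel prefactor combines with $\e^{-t\cosh\xi}$ into the decaying weight $\e^{-t\e^\xi}$, the bound for $R_M^{(K)}$ integrates termwise to the stated estimate on $R_M^{(P2)}$, and for $\xi>0$ with real $\mu,\nu$ the sign-definiteness and first-neglected-term domination descend directly from Olver's exact-sign bound.

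The genuine work lies in the dominant constituent $R_N^{(P1)}$, which carries $\e^{(\nu+\frac12)\xi}$ and the constant $C(\xi,\mu)$ of \eqref{eq6}. Here I would reach the dominant solution by formally continuing $\xi\mapsto-\xi$ in \eqref{QIntRepr1}, equivalently rotating the Bessel argument by $\pm\pi$ and invoking $K_\mu(w\e^{\pm\pi\im})=\e^{\mp\mu\pi\im}K_\mu(w)\mp\im\pi I_\mu(w)$. This reflection $\sinh\xi\mapsto-\sinh\xi$ accounts both for the sign in $\bigl(-\e^{-\xi}/\sinh\xi\bigr)^n$ and for the governing weight becoming $\e^{-t\e^{-\xi}}$ (from $\cosh\xi-\sinh\xi=\e^{-\xi}$). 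The recessive tail of the rotated Bessel function, weighted by $\e^{\mp\mu\pi\im}$ and $\mp\im\pi$, is exactly what manufactures the three-case constant $C(\xi,\mu)$: the value $\sin(\pi\mu)$ when $\xi>0$, and $\pm\im\e^{\mp\pi\im\mu}$ according to the sign of $\Im\xi$.

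The substantive difficulty, and the origin of the piecewise bound on $R_N^{(P1)}$, is the position of the rotated Bessel argument relative to the Stokes rays $\arg w=\pm\pi$. When $\Re(\e^{2\xi})\le1$ that argument lies in a sector where the remainder estimate keeps the clean constant $1$. When $\Re(\e^{2\xi})>1$—which includes every real positive $\xi$, for which the rotated argument sits precisely on $\arg w=\pm\pi$—the naive estimate degrades and two recourses compete. One may rotate the $t$-contour onto a ray that restores exponential decay of $\e^{-t\e^{-\xi}}$; optimizing the rotation angle produces the factor $\left|1-\e^{-2\xi}\right|\left|\csc(2\Im\xi)\right|$, effective away from the real axis but singular as $\Im\xi\to0$. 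Alternatively one estimates the oscillatory remainder integral along the real $t$-axis directly, the resulting half-line integral of the Bessel remainder against its Gaussian-type weight evaluating in terms of $\chi$ to give the competing factor $1+\chi\bigl(N+\tfrac12\bigr)$; the smaller of the two is retained. Verifying that the contour rotation is admissible throughout $\mathcal{D}_1$, and that in the transitional regime the reflected Bessel function does not acquire the larger exponential, is the delicate point on which the sharpness of the estimate ultimately rests.
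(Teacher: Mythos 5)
Your proposal follows the paper's route in all essentials: the identical workhorse lemma (the paper's Lemma~\ref{lemma1}, with exactly the cosine-ratio and $a_N(\Re\mu)$ structure you quote), the same Laplace-type Bessel representations, and the same $\e^{\mp\pi\im}$-rotation through the $I$/$K$ connection formula to create the dominant series and the constant $C(\xi,\mu)$. The only structural difference is cosmetic: the paper starts from $P_\nu^\mu(\cosh\xi)=\Gamma(\nu-\mu+1)^{-1}\int_0^{+\infty}t^\nu\e^{-t\cosh\xi}I_{-\mu}(t\sinh\xi)\,\d t$ (its \eqref{eq4}) and decomposes $I_{-\mu}$ into $K_{-\mu}(w\e^{\mp\pi\im})$ and $K_{-\mu}(w)$, whereas your rotation of \eqref{QIntRepr1} with $K_\mu(w\e^{\pm\pi\im})=\e^{\mp\mu\pi\im}K_\mu(w)\mp\im\pi I_\mu(w)$ lands on $\Gamma(\nu+\mu+1)P_\nu^{-\mu}$, so you need one extra connection formula to reach $P_\nu^\mu$ (or use $K_\mu=K_{-\mu}$ to bring in $I_{-\mu}$ directly).

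The execution, however, has two genuine defects. First, for complex $\xi$ the bounds do \emph{not} follow by integrating the Bessel estimate ``termwise'' along the real $t$-axis, as you assert for $R_M^{(P2)}$: there $|\e^{-t\e^{\xi}}|=\exp\!\left(-t\,\e^{\Re\xi}\cos\Im\xi\right)$, so the integral diverges outright when $|\Im\xi|\ge\frac{\pi}{2}$, and even for $|\Im\xi|<\frac{\pi}{2}$ the direct estimate carries an extra factor $(\cos\Im\xi)^{-(\Re\nu-M+\frac12)}$, which grows exponentially with $\Re\nu$ and so cannot give the stated bound. The step you defer as a ``delicate point'' is in fact the crux: substitute $t=s\e^{-\xi}$ (resp.\ $t=s\e^{\xi}$ for the dominant part), rotate the $s$-contour back to $(0,+\infty)$ by Cauchy's theorem --- this yields the paper's \eqref{eq12}--\eqref{eq13}, with weight exactly $\e^{-s}$ --- and then use the mapping properties of $\e^{-\xi}\sinh\xi$ (into the disk $\left|z-\frac12\right|<\frac12$, hence $|\arg|<\frac{\pi}{2}$ and the clean factor $1$ for $R_M^{(P2)}$) and of $\e^{\xi}\sinh\xi$ (into the slit sector $|\arg z|<2\pi$); this representation is also what continues the result from $\mathcal{D}_{\frac12}$, where \eqref{QIntRepr1} and \eqref{eq4} actually converge, to all of $\mathcal{D}_1$. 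Second, your account of where the piecewise bound on $R_N^{(P1)}$ comes from would not produce it: the rotation angle is forced (it must equal $\Im\xi$ to normalise the weight to $\e^{-s}$), and any further rotation by $\phi\neq0$ reinstates a factor $(\cos\phi)^{-(\Re\nu-N+\frac12)}$, so ``optimising the rotation angle'' cannot yield the $\nu$-uniform factor $\left|1-\e^{-2\xi}\right|\left|\csc(2\Im\xi)\right|$; nor does any direct estimate of the $t$-integral manufacture $1+\chi\big(N+\tfrac12\big)$. In the paper neither factor is derived at the Legendre level: both $\left|\csc(\arg w)\right|$ and $1+\chi\big(N+\tfrac12\big)$ are already present in Lemma~\ref{lemma1} on the sector $\frac{\pi}{2}<|\arg w|\le\pi$, that bound is applied pointwise at the rotated argument $\vartheta\mp\pi$ with $\vartheta=\arg(\e^{\xi}\sinh\xi)$, the condition $\Re(\e^{2\xi})>1$ is precisely the statement that this rotated argument lies in the bad sector, and $|\csc\vartheta|=\left|1-\e^{-2\xi}\right|\left|\csc(2\Im\xi)\right|$ is an elementary identity. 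With the lemma you yourself cite taken in full, your ``two recourses'' are unnecessary; as actual derivations they fail.
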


\begin{theorem}\label{thm2} Let $\xi \in \mathcal{D}_1$ and $N$ be an arbitrary non-negative integer. Let $\mu$ and $\nu$ be complex numbers satisfying $\left|\Re\mu \right| < N + \frac{1}{2}$ and $\Re\nu  > N - \frac{1}{2}$. Then
\begin{equation}\label{eq10}
 \e^{-\pi \im\mu }Q_\nu ^\mu  (\cosh \xi ) = \frac{\sqrt{\frac{\pi}{2\sinh \xi } } \e^{ -  \left(\nu+\frac{1}{2}\right) \xi}}{\Gamma (\nu  - \mu  + 1)}
 \left( \sum_{n = 0}^{N - 1} a_n (\mu )\Gamma \left( \nu  - n + \tfrac{1}{2} \right)\left( \frac{\e^\xi }{\sinh \xi } \right)^n   
 + R_N^{(Q)} (\xi ,\mu ,\nu ) \right) ,
\end{equation}
where the remainder term satisfies the estimate
\begin{equation}\label{eq10a}
\left| R_N^{(Q)} (\xi ,\mu ,\nu ) \right| \le \left| \frac{\cos (\pi \mu ) }{\cos (\pi \Re\mu ) }\right|\left| a_N (\Re\mu ) \right| 
\Gamma \left( \Re\nu  - N + \tfrac{1}{2} \right) \left| \frac{\e^\xi }{\sinh \xi } \right|^N.
\end{equation}
If $2\Re\mu $ is an odd integer, then the limiting value has to be taken in this bound. The square root is defined to be positive for positive real $\xi$ and is defined by continuity elsewhere. In addition, the remainder term $R_N^{(Q)} (\xi ,\mu ,\nu )$ does not exceed the corresponding first neglected term in absolute value and has the same sign provided that $\xi$ is positive, $\mu$ and $\nu$ are real, and $\left| \mu \right| <N + \frac{1}{2}$ and 
$\nu > N - \frac{1}{2}$.
\end{theorem}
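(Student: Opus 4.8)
The plan is to exploit the Bessel integral representation \eqref{QIntRepr1}, which exhibits $\e^{-\pi\im\mu}Q_\nu^\mu(\cosh\xi)$ as a Laplace-type transform of the modified Bessel function $K_\mu(t\sinh\xi)$, and to feed into it the large-argument expansion of $K_\mu$. The coefficients $a_n(\mu)$ introduced just before \eqref{eq6} are precisely the coefficients of that Bessel expansion, so that
\[
K_\mu(w) = \sqrt{\frac{\pi}{2w}}\,\e^{-w}\left(\sum_{n=0}^{N-1}\frac{a_n(\mu)}{w^n} + R_N^{(K)}(w,\mu)\right),
\]
and the two expansions are matched by construction. First I would work under the stronger hypothesis $\xi>0$, where \eqref{QIntRepr1} is valid; note that $\left|\Re\mu\right|<N+\frac12$ and $\Re\nu>N-\frac12$ force $\Re\nu>\left|\Re\mu\right|-1$, so the representation applies.

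Substituting the displayed expansion of $K_\mu(t\sinh\xi)$ into \eqref{QIntRepr1} and integrating term by term, each term collapses to an Euler integral $\int_0^{\infty} t^{\nu-n-\frac12}\e^{-t\e^{\xi}}\,\d t = \Gamma\!\left(\nu-n+\tfrac12\right)\e^{-\xi(\nu-n+\frac12)}$, legitimate because $\Re\!\left(\nu-n+\tfrac12\right)>0$ for $0\le n\le N$. A short rearrangement of the exponential and $\sinh\xi$ factors reproduces exactly the $n$-th term of \eqref{eq10}, while the contribution of $R_N^{(K)}$ identifies the remainder as the Laplace transform of the Bessel remainder,
\[
R_N^{(Q)}(\xi,\mu,\nu) = \e^{\left(\nu+\frac12\right)\xi}\int_0^{\infty} t^{\nu-\frac12}\,\e^{-t\e^{\xi}}\,R_N^{(K)}(t\sinh\xi,\mu)\,\d t .
\]
Thus the whole problem reduces to a pointwise estimate of $R_N^{(K)}$ followed by one further Euler integral.

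For that estimate I would invoke the sharp half-plane bound for the modified Bessel remainder, valid for $\Re w>0$ and $\left|\Re\mu\right|<N+\frac12$, of the clean form $\left|R_N^{(K)}(w,\mu)\right|\le\left|\frac{\cos(\pi\mu)}{\cos(\pi\Re\mu)}\right|\left|a_N(\Re\mu)\right||w|^{-N}$ (with limiting values when $2\Re\mu$ is an odd integer), which follows from the integral representation of $K_\mu$ by the standard majorant argument in $\mu$. Inserting it and carrying out $\int_0^{\infty} t^{\Re\nu-N-\frac12}\e^{-t\e^{\xi}}\,\d t$ yields \eqref{eq10a} verbatim. The hard part will be the passage from $\xi>0$ to general $\xi\in\mathcal{D}_1$. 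The identity \eqref{eq10} itself extends by analytic continuation, since both sides are holomorphic in $\xi\in\mathcal{D}_1$ (recall $\cosh$ maps $\mathcal{D}_1$ biholomorphically onto $\mathbb{C}\setminus(-\infty,1]$) and agree on the positive real axis. To propagate the \emph{bound}, I would rotate the contour in the integral for $R_N^{(Q)}$ to the ray $\arg t=-\Im\xi$: writing $\xi=\sigma+\im\tau$ with $\sigma>0$, on this ray $t\e^{\xi}$ is positive real, so $\e^{-t\e^{\xi}}$ still decays, while $\Re(t\sinh\xi)=|t|\bigl(\sinh\sigma\cos^2\tau+\cosh\sigma\sin^2\tau\bigr)>0$, so the half-plane Bessel bound continues to apply along the entire ray; the factors $\e^{\pm\tau\Im\nu}$ cancel and $\e^{N\sigma}|\sinh\xi|^{-N}=\left|\e^{\xi}/\sinh\xi\right|^{N}$, giving \eqref{eq10a} for all $\xi\in\mathcal{D}_1$. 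Justifying this rotation by Cauchy's theorem — checking analyticity of the integrand in the intervening sector and its decay at infinity — is the one genuinely delicate point. Finally, for the sharpness statement when $\xi>0$ and $\mu,\nu$ are real, I would use that the exact Bessel remainder $R_N^{(K)}$ is then of one definite sign, the sign of $a_N(\mu)$ (its binomial-type integral representation has a sign-definite integrand); the positive weight $t^{\nu-\frac12}\e^{-t\e^{\xi}}$ in the Laplace transform preserves both the sign and the majorant, so $R_N^{(Q)}$ does not exceed the first neglected term in absolute value and shares its sign.
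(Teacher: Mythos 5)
Your overall route coincides with the paper's: the paper also starts from \eqref{QIntRepr1}, inserts the large-argument expansion of $K_\mu$ with the remainder bound of Lemma \ref{lemma1} (the $|\arg w|\le\frac{\pi}{2}$ case is exactly your ``half-plane bound''), identifies $R_N^{(Q)}$ as the Laplace transform of the Bessel remainder, and handles real parameters by the same sign-definiteness/mean-value argument. (The only organizational difference is that the paper derives Theorem \ref{thm2} as a by-product of the proof of Theorem \ref{thm1}, observing $R_N^{(Q)}(\xi,\mu,\nu)=R_N^{(P2)}(\xi,\mu,\nu)$; and your rotation of the $t$-contour onto the ray $\arg t=-\Im\xi$ is literally the paper's substitution $t=s\e^{-\xi}$ followed by deformation of the $s$-contour back to the positive real axis, cf.\ \eqref{eq13}.)

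There is, however, one genuine flaw in the way you propose to justify the passage to general $\xi\in\mathcal{D}_1$. You first ``extend the identity \eqref{eq10} by analytic continuation'' and then, for arbitrary $\xi=\sigma+\im\tau\in\mathcal{D}_1$, rotate the contour of the Laplace integral by Cauchy's theorem, checking ``decay at infinity in the intervening sector''. For $\frac{\pi}{2}\le|\tau|<\pi$ this check fails: there $\Re(\e^{\xi})\le 0$, so $\e^{-t\e^{\xi}}$ does not decay along the positive real $t$-axis, the unrotated integral diverges, and no deformation in the $t$-plane can connect it to the rotated ray. (Note also that continuing ``the identity \eqref{eq10}'' by itself carries no information, since \eqref{eq10} is precisely the definition of $R_N^{(Q)}$.) The repair — and this is exactly how the paper argues — is to reverse the order of the two steps: perform the rotation only for $\xi\in\mathcal{D}_{\frac12}$, where both integrals converge and the intervening sector does have decay, arriving at
\[
R_N^{(Q)}(\xi,\mu,\nu)=\int_0^{+\infty}s^{\nu-\frac12}\e^{-s}\,r_N^{(K)}\!\left(s\e^{-\xi}\sinh\xi,\mu\right)\d s ,
\]
which is your rotated integral in the parametrization $t=s\e^{-\xi}$; then analytically continue \emph{this equation} in $\xi$ throughout $\mathcal{D}_1$. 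That continuation is legitimate because $\e^{-\xi}\sinh\xi=\frac12\left(1-\e^{-2\xi}\right)$ maps $\mathcal{D}_1$ into the disk $\left|z-\frac12\right|<\frac12$, so the argument of the Bessel remainder stays in the open right half-plane, the integral converges locally uniformly in $\xi$, and your half-plane bound together with the Euler integral then yields \eqref{eq10a} on all of $\mathcal{D}_1$. With this reordering your proposal becomes a complete proof, identical in substance to the paper's.
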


A combination of Theorems \ref{thm22a} and \ref{thm22} with the representation \eqref{QhypergeoRepr2} leads to error bounds for the factorial expansion \eqref{formal3} of the associated Legendre function $Q_\nu ^\mu  (z)$ as given in the following two theorems.

\begin{theorem}\label{thm211} Let $N$ be an arbitrary non-negative integer. Let $\mu$ and $\nu$ be complex numbers satisfying $\left|\Re \mu \right| < N + \frac{1}{2}$ and $\Re \nu  >  - \Re \mu-1$. Then
\begin{equation}\label{factexpQ}
\e^{-\pi\im\mu}Q_\nu ^\mu  (\cosh\xi)= \sqrt{\frac{\pi}{2\sinh\xi}}\e^{-\left(\nu+\frac12\right)\xi}\Gamma(\nu+\mu+1)
\left(\sum_{n=0}^{N-1} \frac{a_n(\mu)}{\Gamma\left(\nu+\frac32+n\right)}\left(\frac{\e^{-\xi}}{\sinh\xi}\right)^n +\widehat{R}_N^{(Q)}(\xi,\mu,\nu)\right),
\end{equation}
where the remainder term satisfies the estimate
\begin{align*}
\left|\widehat{R}_N^{(Q)}(\xi,\mu,\nu)\right|\leq \; & \left|\frac{ \Gamma \left( \frac{1}{2}- \Re \mu  + N \right) \Gamma \left( \Re \nu +\Re \mu  + 1 \right)}{\Gamma \left( \frac{1}{2} - \mu  + N \right) \Gamma \left( \nu  + \mu  + 1 \right) } \right|\max \left( 1,\e^{ (\Im \mu )\arg (\e^\xi  \sinh \xi )} \right) \\ & \times  \frac{\left|a_N(\mu)\right|}{\Gamma\left(\Re\nu+\frac32+N\right)}\left|\frac{\e^{-\xi}}{\sinh\xi}\right|^N,
\end{align*}
provided $\Re (\e^{2\xi } ) \ge 1$ and with the convention $\left| \arg (\e^\xi  \sinh \xi) \right| \le \frac{\pi }{2}$. With the extra assumption $\Re\mu< \frac12$, we also have 
\begin{align*}
\left|\widehat{R}_N^{(Q)}(\xi,\mu,\nu)\right|\leq \; & \left| \frac{\cos (\pi \mu )}{\cos (\pi \Re \mu )} \right|
\left| \frac{\Gamma \left( \frac{1}{2} - \Re \mu  \right)\Gamma (\Re \nu  + \Re \mu  + 1)}{\Gamma \left( \frac{1}{2} - \mu  \right)\Gamma (\nu  + \mu  + 1)} \right|
\frac{\left| a_N (\Re \mu )  \right|}{\Gamma \left( \Re \nu  + \frac{3}{2} + N \right)}\left| \frac{\e^{ - \xi } }{\sinh \xi }\right|^N \\
& \times \begin{cases} 1  & \text{ if } \; \Re (\e^{2\xi } ) \ge 1, \\
\left| 1 - \e^{ - 2\xi } \right|\left| \csc (2\Im\xi ) \right|  & \text{ if } \; 0<\Re (\e^{2\xi } ) < 1, \\
|1 - \e^{ - 2\xi }| & \text{ if } \;  \sqrt 2 | \cos \Im \xi | < 1,\end{cases}
\end{align*}
provided $\xi \in \mathcal{D}_1$. If $2\Re\mu $ is an odd integer, then the limiting value has to be taken in this bound. The square root is defined to be positive for positive real $\xi$ and is defined by continuity elsewhere. In addition, the remainder term $\widehat{R}_N^{(Q)} (\xi ,\mu ,\nu )$ does not exceed the corresponding first neglected term in absolute value and has the same sign provided that $\xi$ is positive, $\mu$ and $\nu$ are real, and $\left| \mu \right| <N + \frac{1}{2}$ and $\nu  >  - \mu-1$.
\end{theorem}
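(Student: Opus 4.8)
The plan is to obtain both remainder estimates by specialising the general hypergeometric bounds of Theorem~\ref{thm22a} to the representation \eqref{QhypergeoRepr2}. First I would set $a=\frac12+\mu$, $b=\frac12-\mu$, $c=\nu+\frac32$ and $z=\frac{-\e^{-\xi}}{2\sinh\xi}$, and record the two identities $z=\frac{1}{1-\e^{2\xi}}$ and $-z=\frac{1}{2\e^\xi\sinh\xi}$, which will be used repeatedly. A short computation shows that the $n$th term of $\hyperOlverF{\frac12+\mu}{\frac12-\mu}{\nu+\frac32}{z}$ equals $\frac{a_n(\mu)}{\Gamma(\nu+\frac32+n)}\bigl(\frac{\e^{-\xi}}{\sinh\xi}\bigr)^n$, because $\frac{(\frac12+\mu)_n(\frac12-\mu)_n}{n!}z^n=a_n(\mu)\bigl(\frac{\e^{-\xi}}{\sinh\xi}\bigr)^n$ and $\frac{1}{(\nu+\frac32)_n\,\Gamma(\nu+\frac32)}=\frac{1}{\Gamma(\nu+\frac32+n)}$. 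Hence the partial sum in \eqref{factexpQ} is exactly $\Gamma(\nu+\frac32)^{-1}$ times the one in \eqref{hypergeomexp}, and comparison with \eqref{QhypergeoRepr2} yields the key relation $\widehat R_N^{(Q)}(\xi,\mu,\nu)=R_N^{(F)}(z,a,b,c)/\Gamma(\nu+\frac32)$. Everything then reduces to rewriting \eqref{est1} and \eqref{est2} under this dictionary.

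Before applying the bounds I would check that the hypotheses match: $N>\max(-\Re a,-\Re b)$ is precisely $|\Re\mu|<N+\frac12$; $\Re c>\Re b$ is precisely $\Re\nu>-\Re\mu-1$; and for the second bound the extra condition $\Re a<1$ is precisely $\Re\mu<\frac12$. For the first estimate I insert the identifications into \eqref{est1}: the Gamma prefactor becomes $\bigl|\frac{\Gamma(\frac12-\Re\mu+N)\Gamma(\nu+\frac32+N)\Gamma(\Re\nu+\Re\mu+1)}{\Gamma(\frac12-\mu+N)\Gamma(\Re\nu+\frac32+N)\Gamma(\nu+\mu+1)}\bigr|$, while $\bigl|\frac{(a)_N(b)_N}{(c)_N N!}z^N\bigr|=|a_N(\mu)|\,\frac{|\Gamma(\nu+\frac32)|}{|\Gamma(\nu+\frac32+N)|}\,\bigl|\frac{\e^{-\xi}}{\sinh\xi}\bigr|^N$. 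After dividing by $\Gamma(\nu+\frac32)$ the factors $\Gamma(\nu+\frac32+N)$ and $\Gamma(\nu+\frac32)$ cancel, leaving exactly the stated prefactor times $\frac{|a_N(\mu)|}{\Gamma(\Re\nu+\frac32+N)}\bigl|\frac{\e^{-\xi}}{\sinh\xi}\bigr|^N$. The condition $\Re z\le0$ is equivalent to $\Re(\e^{2\xi})\ge1$ (since $\Re z$ and $\Re(1-\e^{2\xi})$ have opposite signs), and using $-z=\frac{1}{2\e^\xi\sinh\xi}$, so that $\arg(-z)=-\arg(\e^\xi\sinh\xi)$, the factor $\max(1,\e^{(\Im a)\arg(-z)})$ produces the exponential factor recorded in the statement; here one must fix branches so that the convention $|\arg(-z)|\le\frac\pi2$ corresponds to $|\arg(\e^\xi\sinh\xi)|\le\frac\pi2$.

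The second estimate comes the same way from \eqref{est2}. The trigonometric prefactor simplifies via $\sin(\pi a)=\sin\bigl(\pi(\frac12+\mu)\bigr)=\cos(\pi\mu)$, so $\frac{\sin(\pi a)}{\sin(\pi\Re a)}=\frac{\cos(\pi\mu)}{\cos(\pi\Re\mu)}$, and the Gamma quotient $\frac{\Gamma(\Re b)\Gamma(c)\Gamma(\Re c-\Re b)}{\Gamma(b)\Gamma(\Re c)\Gamma(c-b)}$ becomes $\frac{\Gamma(\frac12-\Re\mu)\Gamma(\nu+\frac32)\Gamma(\Re\nu+\Re\mu+1)}{\Gamma(\frac12-\mu)\Gamma(\Re\nu+\frac32)\Gamma(\nu+\mu+1)}$; the factors $\Gamma(\nu+\frac32)$ and $\Gamma(\Re\nu+\frac32)$ again cancel after dividing by $\Gamma(\nu+\frac32)$ and rewriting $\bigl|\frac{(\Re a)_N(\Re b)_N}{(\Re c)_N N!}z^N\bigr|=|a_N(\Re\mu)|\frac{\Gamma(\Re\nu+\frac32)}{\Gamma(\Re\nu+\frac32+N)}\bigl|\frac{\e^{-\xi}}{\sinh\xi}\bigr|^N$. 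The three cases are the crux. From $z=\frac{1}{1-\e^{2\xi}}$ I would show $\Re z\le0\iff\Re(\e^{2\xi})\ge1$, and $\Re z>|z|^2\iff\Re(\e^{2\xi})<0\iff\sqrt2\,|\cos\Im\xi|<1$ (using $\Re z>|z|^2\iff|z-\frac12|<\frac12\iff|1+\e^{2\xi}|<|1-\e^{2\xi}|$). The case factors convert cleanly: $\frac{1}{|1-z|}=|1-\e^{-2\xi}|$ because $1-z=\frac{\e^{2\xi}}{\e^{2\xi}-1}$, and $\bigl|\frac{z}{\Im z}\bigr|=|1-\e^{-2\xi}|\,|\csc(2\Im\xi)|$ because $\Im z=\frac{\e^{2\Re\xi}\sin(2\Im\xi)}{|1-\e^{2\xi}|^2}$.

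Finally, the real-variable sharpness claim (the remainder not exceeding, and sharing the sign of, the first neglected term) transfers directly from the corresponding assertion in Theorem~\ref{thm22a}: when $\xi>0$ and $\mu,\nu$ are real we have $z<0$, and the hypotheses $c>b$, $N>\max(-a,-b)$ read $\nu>-\mu-1$, $|\mu|<N+\frac12$. The indeterminate factor arising when $2\Re\mu$ is an odd integer is handled by passing to the limit, exactly as in the analogous caveat of Theorem~\ref{thm22a} (there phrased as $\Re a\in\mathbb{Z}$ or $\Re b\in\mathbb{Z}_{\le0}$, here $\Re a=\frac12+\Re\mu$), and the branch of $\sqrt{\pi/(2\sinh\xi)}$ is fixed by continuity as stated. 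I expect the only genuine work to be the case-by-case translation of the planar conditions $\Re z\lessgtr0$, $\Re z\lessgtr|z|^2$ and of the quantities $|1-z|$, $|z/\Im z|$ into the hyperbolic variable $\xi$, together with the consistent branch bookkeeping for $\arg(-z)$ versus $\arg(\e^\xi\sinh\xi)$; the rest is substitution into Theorem~\ref{thm22a}.
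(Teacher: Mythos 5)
Your approach coincides with the paper's: the paper's entire proof of this theorem is the remark that it follows by combining Theorem \ref{thm22a} with the representation \eqref{QhypergeoRepr2}, and your dictionary $a=\tfrac12+\mu$, $b=\tfrac12-\mu$, $c=\nu+\tfrac32$, $z=-\e^{-\xi}/(2\sinh\xi)$, the identification of the partial sums, the relation $\widehat{R}_N^{(Q)}=R_N^{(F)}(z,a,b,c)/\Gamma\bigl(\nu+\tfrac32\bigr)$, the matching of hypotheses, the Gamma-factor cancellations, the translation of the three cases of \eqref{est2} (via $\Re z\le 0\iff\Re(\e^{2\xi})\ge1$, $\Re z>|z|^2\iff\sqrt2\,|\cos\Im\xi|<1$, $|1-z|^{-1}=|1-\e^{-2\xi}|$ and $|z/\Im z|=|1-\e^{-2\xi}|\,|\csc(2\Im\xi)|$), and the transfer of the real-variable sign statement are all correct.

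The one genuine problem is the exponential factor in the first bound. You correctly note that $-z=1/(2\e^\xi\sinh\xi)$, hence $\arg(-z)=-\arg(\e^\xi\sinh\xi)$ under the stated conventions; but then \eqref{est1}, applied with $\Im a=\Im\mu$, yields the factor $\max\bigl(1,\e^{(\Im\mu)\arg(-z)}\bigr)=\max\bigl(1,\e^{-(\Im\mu)\arg(\e^\xi\sinh\xi)}\bigr)$, whose exponent has the \emph{opposite} sign to the factor $\max\bigl(1,\e^{(\Im\mu)\arg(\e^\xi\sinh\xi)}\bigr)$ in the statement, so your assertion that the substitution ``produces the exponential factor recorded in the statement'' contradicts your own identity. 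The difference matters: when $(\Im\mu)\arg(\e^\xi\sinh\xi)>0$ the stated factor exceeds yours and the claim follows, but when $(\Im\mu)\arg(\e^\xi\sinh\xi)<0$ the stated factor equals $1$ while your derivation only gives $\e^{|(\Im\mu)\arg(\e^\xi\sinh\xi)|}$, so the statement as written is not proved in that regime. Exchanging the roles of $a$ and $b$ (legitimate, since the hypergeometric function is symmetric in $a,b$) does flip the sign of the exponent, but it simultaneously replaces the Gamma prefactor by one built from $\Gamma\bigl(\tfrac12+\mu+N\bigr)$ and $\Gamma(\nu-\mu+1)$ and the hypothesis by $\Re\nu>\Re\mu-1$, so the hybrid appearing in the theorem is not reachable either way. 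Since the paper's proof is the same one-line substitution, this is almost certainly a sign slip in the published statement (the factor should involve $\arg\bigl(\e^{-\xi}/\sinh\xi\bigr)=\arg(-z)$ rather than $\arg(\e^\xi\sinh\xi)$); still, a blind proof of the statement as given had to either produce the stated sign or flag the discrepancy, and yours silently asserts a match that your own computation does not deliver.
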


\begin{theorem}\label{thm21} Let $\xi \in \mathcal{D}_1$ and $N$ be an arbitrary non-negative integer. Let $\mu$ and $\nu$ be complex numbers satisfying $-\frac12<\Re\mu<N+\frac12$ and $|\arg(\nu+\mu+1)|<\pi$. Then the expansion \eqref{factexpQ} holds with the remainder estimate
\begin{equation}\label{factexpQa}
\left|\widehat{R}_N^{(Q)}(\xi,\mu,\nu)\right|\leq B_N\left|\frac{a_N(\mu)}{\Gamma\left(\nu+\frac32+N\right)}\left(\frac{\e^{-\xi}}{\sinh\xi}\right)^N\right|.
\end{equation}
Here
\[
B_N = B_N(\xi,\mu,\nu,\sigma) = \left|\frac{N}{\frac12+\mu}\right|
+\left|\frac{\Gamma\left(N+\frac12-\Re\mu\right)\Gamma\left(\nu+\frac32+N\right)}{\Gamma\left(N+\frac12-\mu\right)\Gamma(\nu+\mu+1)}
\frac{\e^{(\pi+|\sigma|)\left|\Im\mu\right|}\left(1+\frac{N}{\frac12+\mu}\right)L^{\frac12+\Re\mu}}{\left((\nu+\mu+1)\cos(\theta+\sigma)\right)^{N+\frac12-\Re\mu}}
\right|,
\]
in which $\theta=\arg(\nu+\mu+1)$, $|\theta+\sigma|<\frac{\pi}{2}$, $|\sigma|\leq\frac{\pi}{2}$ and 
\[
L = L(\xi,\sigma) = \min\left(\frac{2}{\cos\sigma},\max\left(1,\frac{\sqrt{1+\left(\Im\coth\xi\right)^2}}{\Re\coth\xi}\right)\right).
\]
The fractional powers are taking their principal values. In the case that $|\theta|<\frac{\pi}{2}$ we take $\theta+\sigma=0$, that is, $\cos(\theta+\sigma)=1$. Note that $B_N=\O(1)$ as $|\nu|\to+\infty$. 
\end{theorem}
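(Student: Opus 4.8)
The plan is to obtain the bound directly from the hypergeometric representation \eqref{QhypergeoRepr2} together with the general estimate of Theorem \ref{thm22}, exactly as announced before the statement. I would set $a=\frac12+\mu$, $b=\frac12-\mu$, $c=\nu+\frac32$ and $z=\frac{-\e^{-\xi}}{2\sinh\xi}$, so that the regularised hypergeometric series in \eqref{QhypergeoRepr2} has $n$-th coefficient $\frac{(a)_n(b)_n}{\Gamma(c+n)n!}z^n$. A short computation using $(\frac12+\mu)_n(\frac12-\mu)_n=(-2)^n n!\,a_n(\mu)$ and $z^n=\frac{(-1)^n\e^{-n\xi}}{2^n\sinh^n\xi}$ shows this coefficient equals $\frac{a_n(\mu)}{\Gamma(\nu+\frac32+n)}(\frac{\e^{-\xi}}{\sinh\xi})^n$. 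Comparing \eqref{QhypergeoRepr2} with \eqref{factexpQ} term by term then identifies the remainder as $\widehat{R}_N^{(Q)}(\xi,\mu,\nu)=R_N^{(F)}(z,a,b,c)/\Gamma(c)$, reducing everything to bounding $R_N^{(F)}$.

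Next I would check that the hypotheses of Theorem \ref{thm22} hold under those of the present theorem. Since $c-b=\nu+\mu+1$, the conditions $\Re a>0$, $N>-\Re b$, $|\arg(c-b)|<\pi$ and $c\notin\mathbb{Z}_{\leq 0}$ translate precisely into $-\frac12<\Re\mu$, $\Re\mu<N+\frac12$ and $|\arg(\nu+\mu+1)|<\pi$. The one genuinely analytic point is the requirement $\Re z\leq\frac12$. Writing $w=\e^{2\xi}$ one has $z=1/(1-w)$, and the M\"obius map $w\mapsto 1/(1-w)$ sends the unit circle $|w|=1$ onto the line $\Re z=\frac12$, carrying the exterior $|w|>1$ into the half-plane $\Re z<\frac12$ (e.g. $w=2\mapsto z=-1$). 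As $\xi\in\mathcal{D}_1$ forces $|w|=\e^{2\Re\xi}>1$, we obtain $\Re z<\frac12$, and Theorem \ref{thm22} applies with the free parameter $\sigma$ inherited verbatim.

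It then remains to match the two bounds. Applying Theorem \ref{thm22}, dividing by $|\Gamma(c)|$, and using $|(c)_N|\,|\Gamma(c)|=|\Gamma(c+N)|$, the first-neglected-term factor $\frac{(a)_N(b)_N}{(c)_N N!}z^N/\Gamma(c)$ collapses, by the same algebra as above, exactly to $\frac{a_N(\mu)}{\Gamma(\nu+\frac32+N)}(\frac{\e^{-\xi}}{\sinh\xi})^N$, i.e. the right-hand side of \eqref{factexpQa}. The prefactor $A_N$ becomes $B_N$ once one verifies $K=L$. The key identity is $\frac12-z=\frac12\coth\xi$, which follows from $z=1/(1-\e^{2\xi})$ and $\coth\xi=(\e^{2\xi}+1)/(\e^{2\xi}-1)$; it yields $\frac12-\Re z=\frac12\Re\coth\xi$ and $\Im z=-\frac12\Im\coth\xi$, whence $\frac{\sqrt{(\Im z)^2+\frac14}}{\frac12-\Re z}=\frac{\sqrt{1+(\Im\coth\xi)^2}}{\Re\coth\xi}$ and therefore $K(z,\sigma)=L(\xi,\sigma)$. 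All remaining Gamma-factors, the exponential $\e^{(\pi+|\sigma|)|\Im\mu|}$ (since $|\Im a|=|\Im b|=|\Im\mu|$), and the power $((\nu+\mu+1)\cos(\theta+\sigma))^{N+\frac12-\Re\mu}$ then correspond factor for factor, giving $A_N=B_N$.

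The hard part is not conceptual but the verification that every piece of the rather formidable constant $A_N$ maps correctly onto the matching piece of $B_N$; the only two nontrivial inputs are the half-plane inclusion $\Re z<\frac12$ and the identity $\frac12-z=\frac12\coth\xi$ that reduces $K$ to $L$. Finally, the assertion $B_N=\O(1)$ as $|\nu|\to+\infty$ is inherited directly from $A_N=\O(1)$ as $|c|\to+\infty$ in Theorem \ref{thm22}, since $c=\nu+\frac32$.
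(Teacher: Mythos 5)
Your proposal is correct and is precisely the paper's own argument: the paper proves Theorem \ref{thm21} simply by applying Theorem \ref{thm22} to the representation \eqref{QhypergeoRepr2} with $a=\tfrac12+\mu$, $b=\tfrac12-\mu$, $c=\nu+\tfrac32$, $z=-\e^{-\xi}/(2\sinh\xi)$, which is exactly what you do. Your verifications of the two nontrivial points --- that $\xi\in\mathcal{D}_1$ forces $\Re z<\tfrac12$ via $z=1/(1-\e^{2\xi})$, and that $\tfrac12-z=\tfrac12\coth\xi$ turns $K$ into $L$ --- are exactly the computations needed to identify $A_N$ with $B_N$.
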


\section{Ferrers functions for large \texorpdfstring{$\nu$}{nu} and fixed \texorpdfstring{$\mu$}{mu}: error bounds}

In the following theorem, we give computable error bounds for the inverse factorial expansions \eqref{formal8} and \eqref{formal9} of the Ferrers functions.

\begin{theorem}\label{thm3} Let $0<\zeta<\pi$ and $N$ be an arbitrary non-negative integer. Let $\mu$ and $\nu$ be complex numbers satisfying $\left|\Re\mu \right| < N + \frac{1}{2}$ and $\Re\nu  > N - \frac{1}{2}$. Then
\begin{align}
& \mathsf{P}_\nu ^\mu  (\cos \zeta )=\frac{\sqrt{\frac{2}{\pi \sin \zeta }}}{\Gamma (\nu  - \mu  + 1)} \left( \sum\limits_{n = 0}^{N - 1} a_n (\mu ) 
\Gamma \left( \nu  - n + \tfrac{1}{2} \right) \frac{\cos (\alpha_{\mu,\nu,n})}{\sin ^n \zeta}  + R_N^{(\mathsf{P})} (\zeta ,\mu ,\nu ) \right),\label{eq15} \\
& \mathsf{Q}_\nu ^\mu  (\cos \zeta ) = \frac{-\sqrt{\frac{\pi}{2 \sin \zeta }}}{\Gamma (\nu  - \mu  + 1)} \left( \sum\limits_{n = 0}^{N - 1} a_n (\mu )
\Gamma \left( \nu  - n + \tfrac{1}{2} \right)\frac{\sin (\alpha _{\mu ,\nu ,n} )}{\sin ^n \zeta }  + R_N^{(\mathsf{Q})} (\zeta ,\mu ,\nu ) \right),\label{eq21}
\end{align}
where $\alpha_{\mu,\nu,n}$ is defined in \eqref{phasedef1}, and the remainder terms satisfy the estimate
\begin{equation}\label{eq16}
\left| R_N^{(\mathsf{P})} (\zeta ,\mu ,\nu ) \right|,\left| R_N^{(\mathsf{Q})} (\zeta ,\mu ,\nu ) \right| \le \left|\frac{\cos (\pi \mu )}{\cos (\pi \Re\mu )}\right|
\left| a_N (\Re\mu )\right| \Gamma \left( \Re\nu  - N + \tfrac{1}{2} \right) \frac{\cosh ( \Im(\alpha_{\mu,\nu,N}))}{\sin ^N \zeta } .
\end{equation}
With the modified conditions $\left| \Re\mu  \right| < N + \frac{1}{2}$ and $\Re\nu  > N + \frac{1}{2}$, we also have
\begin{gather}\label{eq17}
\begin{split}
\left| R_N^{(\mathsf{P})} (\zeta ,\mu ,\nu ) \right| \le \; & \left|\frac{\cos (\pi \mu )}{\cos (\pi\Re\mu)}\right|\left| a_N (\Re\mu )\right| 
\Gamma \left( \Re\nu  - N + \tfrac{1}{2} \right) \frac{|\cos (\alpha _{\mu ,\nu ,N} )|}{\sin ^N \zeta } \\ 
& +\left|\frac{\cos (\pi \mu )}{\cos (\pi \Re\mu )}\right|\left| a_{N+1} (\Re\mu )\right| 
\Gamma \left( \Re\nu  - N - \tfrac{1}{2} \right) \frac{|\sin (\alpha _{\mu ,\nu ,N} )|}{\sin ^{N+1} \zeta }
\end{split}
\end{gather}
and
\begin{gather}\label{eq23}
\begin{split}
\left| R_N^{(\mathsf{Q})} (\zeta ,\mu ,\nu ) \right| \le \; &\left|\frac{\cos (\pi \mu )}{\cos (\pi \Re\mu )}\right|\left| a_N (\Re\mu )\right| 
\Gamma \left( \Re\nu  - N + \tfrac{1}{2} \right) \frac{|\sin (\alpha _{\mu ,\nu ,N} )|}{\sin ^N \zeta } \\ 
& +\left|\frac{\cos (\pi \mu )}{\cos (\pi \Re\mu )}\right|\left| a_{N+1} (\Re\mu )\right| 
\Gamma \left( \Re\nu  - N - \tfrac{1}{2} \right) \frac{|\cos (\alpha _{\mu ,\nu ,N} )|}{\sin ^{N+1} \zeta } .
\end{split}
\end{gather}
If $2\Re\mu $ is an odd integer, then the limiting values have to be taken in these bounds.
\end{theorem}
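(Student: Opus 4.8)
The plan is to obtain the Ferrers functions as boundary values of the associated Legendre functions along the edge $\Re\xi=0$ of $\mathcal D_1$ and to transport the expansions of Theorems \ref{thm1} and \ref{thm2} across this edge. Since $\cosh(\im\zeta)=\cos\zeta$, $\sinh(\im\zeta)=\im\sin\zeta$ and $\im\zeta\in\partial\mathcal D_1$ for $0<\zeta<\pi$, I would set $\xi=\eta+\im\zeta$ and let $\eta\to0^+$; then $\cosh\xi\to\cos\zeta+\im0$, while the continuity conventions of Theorem \ref{thm1} give $\sqrt{\sinh\xi}\to\e^{\pi\im/4}\sqrt{\sin\zeta}$ and $C(\xi,\mu)\to\im\e^{-\pi\im\mu}$. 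Comparing the prefactors $((z+1)/(z-1))^{\mu/2}$ and $((1+x)/(1-x))^{\mu/2}$ in the definition of $P_\nu^\mu$ and in \eqref{PFerrersdef} shows $\mathsf P_\nu^\mu(\cos\zeta)=\e^{\pi\im\mu/2}P_\nu^\mu(\cos\zeta+\im0)$. Writing $S_+$ and $S_-$ for the two exponential blocks of \eqref{eq3} (taken with $M=N$), the factors $(\mp\e^{\mp\xi}/\sinh\xi)^n$ produce in the limit the conjugate phases $\e^{\pm\im(\alpha_{\mu,\nu,n}-\alpha_{\mu,\nu,0})}$, because $\alpha_{\mu,\nu,n}-\alpha_{\mu,\nu,0}=n(\tfrac\pi2-\zeta)$ by \eqref{phasedef1}; together with the limiting prefactor phase $\e^{\pm\im\alpha_{\mu,\nu,0}}$ and Euler's formula, the two series collapse into $\sum a_n(\mu)\Gamma(\nu-n+\tfrac12)\cos(\alpha_{\mu,\nu,n})/\sin^n\zeta$, which is exactly \eqref{eq15}, with the matching remainder
\[
R_N^{(\mathsf P)}=\tfrac12\big(\e^{\im\alpha_{\mu,\nu,0}}R_N^{(P1)}+\e^{-\im\alpha_{\mu,\nu,0}}R_N^{(P2)}\big),
\]
the blocks being evaluated at $\xi=\im\zeta$.

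For $\mathsf Q_\nu^\mu$ I would first note that $S_-$ is, by Theorem \ref{thm2}, exactly $\tfrac{\im}{\pi}\e^{-2\pi\im\mu}Q_\nu^\mu(\cosh\xi)$: the block $S_-$ and the right-hand side of \eqref{eq10} carry the same sum and the same remainder, so $R_M^{(P2)}=R_N^{(Q)}$, and the prefactors differ only by the constant $C(\xi,\mu)/\pi$. Hence the antisymmetric combination $\tfrac{\im\pi}{2}\e^{\pi\im\mu/2}(S_+-S_-)$ equals $\tfrac{\im\pi}{2}\mathsf P_\nu^\mu(\cos\zeta)+\e^{-3\pi\im\mu/2}Q_\nu^\mu(\cos\zeta+\im0)$, which is precisely $\mathsf Q_\nu^\mu(\cos\zeta)$ (the standard Ferrers–Legendre connection, specialising at $\mu=0$ to $Q_\nu(x+\im0)=\mathsf Q_\nu(x)-\tfrac{\im\pi}{2}\mathsf P_\nu(x)$). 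Running the same limit through $\sin\alpha=\tfrac1{2\im}(\e^{\im\alpha}-\e^{-\im\alpha})$ reproduces \eqref{eq21} with
\[
R_N^{(\mathsf Q)}=\tfrac1{2\im}\big(\e^{\im\alpha_{\mu,\nu,0}}R_N^{(P1)}-\e^{-\im\alpha_{\mu,\nu,0}}R_N^{(P2)}\big).
\]
The leading estimate \eqref{eq16} then follows for both remainders at once: at $\xi=\im\zeta$ one has $|\e^{-\xi}/\sinh\xi|=|\e^{\xi}/\sinh\xi|=1/\sin\zeta$ and $\Re(\e^{2\xi})=\cos2\zeta\le1$, so the case-distinction factors in Theorems \ref{thm1}–\ref{thm2} are both $1$ and $|R_N^{(P1)}|,|R_N^{(P2)}|\le|\cos(\pi\mu)/\cos(\pi\Re\mu)|\,|a_N(\Re\mu)|\,\Gamma(\Re\nu-N+\tfrac12)/\sin^N\zeta$. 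Because $\Im\alpha_{\mu,\nu,n}=(\Im\nu)\zeta+(\Im\mu)\tfrac\pi2$ is independent of $n$, the triangle inequality turns the prefactor $\tfrac12(\e^{-\Im\alpha_{\mu,\nu,0}}+\e^{\Im\alpha_{\mu,\nu,0}})$ into $\cosh(\Im\alpha_{\mu,\nu,N})$, giving \eqref{eq16}.

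For the sharper bounds \eqref{eq17} and \eqref{eq23}, valid under $\Re\nu>N+\tfrac12$, the crude triangle inequality is wasteful since $|\sin\alpha_{\mu,\nu,N}|\le\cosh(\Im\alpha_{\mu,\nu,N})$. Here I would return to \eqref{QIntRepr1}: on the edge $\xi=\im\zeta$ the Bessel argument $t\sinh\xi=\im t\sin\zeta$ is purely imaginary, so $K_\mu$ is replaced by a Hankel combination and the remainder acquires a genuinely oscillatory kernel of phase $\e^{\im\alpha}$. Peeling off the $N$-th term exactly gives $R_N^{(\mathsf P)}=\cos(\alpha_{\mu,\nu,N})\tau_N+R_{N+1}^{(\mathsf P)}$ and $R_N^{(\mathsf Q)}=\sin(\alpha_{\mu,\nu,N})\tau_N+R_{N+1}^{(\mathsf Q)}$, with $\tau_N=a_N(\mu)\Gamma(\nu-N+\tfrac12)/\sin^N\zeta$; the known sharp error bounds for the Hankel-function expansion (equivalently, one further integration by parts in the $t$-integral) differentiate the kernel, interchanging $\cos\leftrightarrow\sin$ and advancing $a_N\mapsto a_{N+1}$, which furnishes exactly the second summands of \eqref{eq17} and \eqref{eq23}. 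Convergence of the once-integrated tail is what forces the stronger hypothesis $\Re\nu>N+\tfrac12$.

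I expect the main obstacle to be the identification in the second step: proving that $\mathsf P$ and $\mathsf Q$ are the exact symmetric and antisymmetric boundary combinations of the two blocks of \eqref{eq3}, i.e. that the splitting furnished by Theorems \ref{thm1}–\ref{thm2} is the one adapted to the oscillatory regime. This rests on the equality $R_M^{(P2)}=R_N^{(Q)}$ and on the limiting values of $\sqrt{\sinh\xi}$ and $C(\xi,\mu)$ being taken consistently; any branch or sign slip would spoil the cancellation that yields $\cos$ for $\mathsf P$ and $\sin$ for $\mathsf Q$. The bookkeeping behind the refined bounds—tracking the $\cos\leftrightarrow\sin$ swap and the coefficient shift through the integration by parts—is routine once the oscillatory integral is set up, but it is the only place where \eqref{eq16} must be upgraded by hand.
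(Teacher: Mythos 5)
Your derivation of the expansions \eqref{eq15}, \eqref{eq21} and of the crude bound \eqref{eq16} is correct and is essentially the paper's own argument. The paper likewise realises the Ferrers functions as boundary values of Theorems \ref{thm1} and \ref{thm2} (via $\mathsf{P}_\nu^\mu(\cos\zeta)=\lim_{\varepsilon\to0+}\e^{\pi\im\mu/2}P_\nu^\mu(\cosh(\varepsilon+\im\zeta))$ and, for $\mathsf{Q}$, the symmetric combination of the two boundary values of $Q_\nu^\mu$), and its remainder formulas \eqref{eq18} and \eqref{eq24} are exactly your
$\tfrac12\bigl(\e^{\im\alpha_{\mu,\nu,0}}R_N^{(P1)}+\e^{-\im\alpha_{\mu,\nu,0}}R_N^{(P2)}\bigr)$ and
$\tfrac1{2\im}\bigl(\e^{\im\alpha_{\mu,\nu,0}}R_N^{(P1)}-\e^{-\im\alpha_{\mu,\nu,0}}R_N^{(P2)}\bigr)$.
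Your route to $\mathsf{Q}$ through $S_-=\tfrac{\im}{\pi}\e^{-2\pi\im\mu}Q_\nu^\mu$ and the one-sided formula $\e^{-3\pi\im\mu/2}Q_\nu^\mu(x+\im0)=\mathsf{Q}_\nu^\mu(x)-\tfrac{\im\pi}{2}\mathsf{P}_\nu^\mu(x)$ is a correct, equivalent variant of the paper's use of both boundary values; both collapse to the same integrals of $r_N^{(K)}\bigl(s\e^{\pm(\zeta-\frac{\pi}{2})\im}\sin\zeta,\mu\bigr)$, to which Lemma \ref{lemma1} applies with case factor $1$, and the $n$-independence of $\Im\alpha_{\mu,\nu,n}$ produces the $\cosh$ factor, as in \eqref{eq22}.

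The genuine gap is in your plan for the refined bounds \eqref{eq17} and \eqref{eq23}. You peel off $\cos(\alpha_{\mu,\nu,N})\tau_N$ exactly and then propose to bound the peeled remainder $R_{N+1}^{(\mathsf{P})}$ by the second summand of \eqref{eq17}. That summand carries the factor $\left|\sin(\alpha_{\mu,\nu,N})\right|$, so such a bound would force $R_{N+1}^{(\mathsf{P})}=0$ whenever $\mu,\nu$ are real and $\alpha_{\mu,\nu,N}$ is a multiple of $\pi$, which is false in general: no estimate of the fixed quantity $R_{N+1}^{(\mathsf{P})}$ can vanish with $\sin(\alpha_{\mu,\nu,N})$. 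Indeed, at such points \eqref{eq17} asserts $\left|\cos(\alpha_{\mu,\nu,N})\tau_N+R_{N+1}^{(\mathsf{P})}\right|\le\left|\tau_N\right|$, i.e.\ a cancellation between the peeled term and the true remainder; this information is destroyed the moment you apply the triangle inequality to the peeled decomposition, so "peel, then bound" can at best recover \eqref{eq16} at level $N+1$ (with $\cosh(\Im\alpha)$, not $\left|\sin\alpha_{\mu,\nu,N}\right|$, in the second term).

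What the paper does instead is to split the \emph{un-peeled} remainder. Writing \eqref{eq18} with the phases referred to $\alpha_{\mu,\nu,N}$ gives, exactly,
\[
R_N^{(\mathsf{P})}(\zeta,\mu,\nu)=\tfrac12\cos(\alpha_{\mu,\nu,N})\,I_+ +\tfrac{\im}{2}\sin(\alpha_{\mu,\nu,N})\,I_-,
\]
\[
I_\pm=\int_0^{+\infty}s^{\nu-\frac12}\e^{-s}\left(\e^{\im\zeta N}(-\im)^N r_N^{(K)}\!\left(s\e^{(\zeta-\frac{\pi}{2})\im}\sin\zeta,\mu\right)\pm\e^{-\im\zeta N}\im^N r_N^{(K)}\!\left(s\e^{-(\zeta-\frac{\pi}{2})\im}\sin\zeta,\mu\right)\right)\d s .
\]
The key point is a cancellation inside $I_-$ only: inserting $r_N^{(K)}(w,\mu)=a_N(\mu)w^{-N}+r_{N+1}^{(K)}(w,\mu)$, the two order-$N$ contributions to $I_-$ are identical (each reduces to $a_N(\mu)s^{-N}\sin^{-N}\zeta$ after the phase factors) and subtract away, so $I_-$ equals the same integral with $r_{N+1}^{(K)}$ in place of $r_N^{(K)}$. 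Lemma \ref{lemma1} then bounds $\tfrac12|I_+|$ at order $N$ and $\tfrac12|I_-|$ at order $N+1$ — the latter producing $\Gamma\left(\Re\nu-N-\tfrac12\right)$, which is where $\Re\nu>N+\tfrac12$ enters — and this yields \eqref{eq17}; note that its first summand is the Lemma \ref{lemma1} bound of $\tfrac12|I_+|$, not your peeled term. The same splitting with $\cos\leftrightarrow\sin$ gives \eqref{eq23}. Your intuition about the classical cosine/sine splitting for oscillatory Bessel-type expansions is the right one, but the splitting must be performed on the remainder itself, so that $\left|\sin\alpha_{\mu,\nu,N}\right|$ multiplies an integral with a genuine leading-order cancellation rather than a generic tail.
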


In the following two theorems, we give computable error bounds for the factorial expansions \eqref{formal10} and \eqref{formal11} of the Ferrers functions. The results in Theorem \ref{thm231} are especially useful when $\Re \nu \to +\infty$ and $\Im \nu$ is bounded, whereas those in Theorem \ref{thm23} are useful when $|\nu| \to +\infty$ in the larger domain $|\arg \nu|\leq\pi-\delta$ ($<\pi$).

\begin{theorem}\label{thm231} Let $0<\zeta<\pi$ and $N$ be an arbitrary non-negative integer. Let $\mu$ and $\nu$ be complex numbers satisfying $- N - \frac{1}{2} < \Re \mu  < \frac{1}{2}$ and $\Re \nu> -\Re \mu-1$. Then
\begin{align}
&\mathsf{P}_\nu ^\mu  (\cos\zeta)  = \sqrt{\frac{2}{\pi\sin\zeta}}\Gamma(\nu+\mu+1)
\left(\sum_{n=0}^{N-1} \frac{a_n(\mu)\cos(\beta_{\mu,\nu,n})}{\Gamma\left(\nu+\frac32+n\right) \sin^n\zeta} +\widehat{R}_N^{(\mathsf{P})}(\zeta,\mu,\nu)\right),\label{eqfp} \\
&\mathsf{Q}_\nu ^\mu  (\cos\zeta)  = -\sqrt{\frac{\pi}{2\sin\zeta}}\Gamma(\nu+\mu+1)
\left(\sum_{n=0}^{N-1} \frac{a_n(\mu)\sin(\beta_{\mu,\nu,n})}{\Gamma\left(\nu+\frac32+n\right) \sin^n\zeta} +\widehat{R}_N^{(\mathsf{Q})}(\zeta,\mu,\nu)\right),\label{eqfq}
\end{align}
where $\beta_{\mu,\nu,n}$ is defined in \eqref{phasedef1}, and the remainder terms satisfy the estimates
\begin{gather}\label{boundpq}
\begin{split}
\left|\widehat{R}_N^{(\mathsf{P})}(\zeta,\mu,\nu)\right|,& \left|\widehat{R}_N^{(\mathsf{Q})}(\zeta,\mu,\nu)\right|\leq \left| \frac{\cos (\pi \mu )}{\cos (\pi \Re \mu )} \right|\left| \frac{\Gamma \left( \frac{1}{2} - \Re \mu  \right)\Gamma (\Re \nu  + \Re \mu  + 1)}{\Gamma \left( \frac{1}{2} - \mu  \right)\Gamma (\nu  + \mu  + 1)}\right| \\ & \times  \frac{\left| a_N (\Re \mu ) \right|\cosh (\Im (\beta _{\mu ,\nu ,N} ))}{\Gamma \left( \Re \nu + \frac{3}{2} + N  \right)\sin ^N \zeta } \times \begin{cases}  \left| \sec \zeta  \right| & \text{ if } \;
0 < \zeta  \le \frac{\pi}{4}\; \text{ or } \; \frac{3\pi}{4} \le \zeta  < \pi, \\ 2\sin \zeta  & \text{ if } \; \frac{\pi}{4} < \zeta  < \frac{3\pi}{4},\end{cases}
\end{split}
\end{gather}
\begin{gather}\label{boundp}
\begin{split}
\left|\widehat{R}_N^{(\mathsf{P})}(\zeta,\mu,\nu)\right| \leq \; & \left| \frac{\cos (\pi \mu )}{\cos (\pi \Re \mu )} \right|\left| \frac{\Gamma \left( \frac{1}{2} - \Re \mu  \right)\Gamma (\Re \nu  + \Re \mu  + 1)}{\Gamma \left( \frac{1}{2} - \mu  \right)\Gamma (\nu  + \mu  + 1)}\right| 
\left(\frac{\left| a_N (\Re \mu ) \cos  ( \beta _{\mu ,\nu ,N} )\right|}{\Gamma \left( \Re \nu + \frac{3}{2} + N  \right)\sin ^N \zeta } \right.\\ & \left.  + \frac{\left| a_{N+1} (\Re \mu ) \sin  ( \beta _{\mu ,\nu ,N} )\right|}{\Gamma \left( \Re \nu + \frac{5}{2} + N  \right)\sin ^{N+1} \zeta }\right)
 \times \begin{cases}  \left| \sec \zeta  \right| & \text{ if } \;
0 < \zeta  \le \frac{\pi}{4}\; \text{ or } \; \frac{3\pi}{4} \le \zeta  < \pi, \\ 2\sin \zeta  & \text{ if } \; \frac{\pi}{4} < \zeta  < \frac{3\pi}{4},\end{cases}
\end{split}
\end{gather}
and
\begin{gather}\label{boundq}
\begin{split}
\left|\widehat{R}_N^{(\mathsf{Q})}(\zeta,\mu,\nu)\right| \leq \; & \left| \frac{\cos (\pi \mu )}{\cos (\pi \Re \mu )} \right|\left| \frac{\Gamma \left( \frac{1}{2} - \Re \mu  \right)\Gamma (\Re \nu  + \Re \mu  + 1)}{\Gamma \left( \frac{1}{2} - \mu  \right)\Gamma (\nu  + \mu  + 1)}\right| 
\left(\frac{\left| a_N (\Re \mu ) \sin  ( \beta _{\mu ,\nu ,N} )\right|}{\Gamma \left( \Re \nu + \frac{3}{2} + N  \right)\sin ^N \zeta } \right.\\ & \left.  + \frac{\left| a_{N+1} (\Re \mu ) \cos  ( \beta _{\mu ,\nu ,N} )\right|}{\Gamma \left( \Re \nu + \frac{5}{2} + N  \right)\sin ^{N+1} \zeta }\right)
 \times \begin{cases}  \left| \sec \zeta  \right| & \text{ if } \;
0 < \zeta  \le \frac{\pi}{4}\; \text{ or } \; \frac{3\pi}{4} \le \zeta  < \pi, \\ 2\sin \zeta  & \text{ if } \; \frac{\pi}{4} < \zeta  < \frac{3\pi}{4}.\end{cases}
\end{split}
\end{gather}
If $2\Re\mu $ is an odd integer, then the limiting values have to be taken in these bounds.
\end{theorem}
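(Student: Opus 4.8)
The plan is to deduce all three estimates from the factorial expansion of $Q_\nu^\mu$ already controlled by Theorem~\ref{thm211}, evaluated on the boundary of $\mathcal{D}_1$. Since $\cos\zeta=\cosh(\im\zeta)$, the two boundary values $Q_\nu^\mu(\cos\zeta\pm\im 0)$ are the limits of \eqref{QhypergeoRepr2} as $\Re\xi\to 0^+$ with $\Im\xi=\pm\zeta$; note that the hypotheses $-N-\tfrac12<\Re\mu<\tfrac12$ and $\Re\nu>-\Re\mu-1$ of Theorem~\ref{thm231} are exactly those needed for the second bound of Theorem~\ref{thm211} at $\xi=\pm\im\zeta$. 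First I would record the standard connection formulae expressing the Ferrers functions through these boundary values (cf. \cite[\S 14.23]{NIST:DLMF}). Substituting $\xi=\pm\im\zeta$ into \eqref{factexpQ} and using $\sinh(\pm\im\zeta)=\pm\im\sin\zeta$ together with $e^{-\xi}/\sinh\xi=e^{\mp\im(\zeta+\pi/2)}/\sin\zeta$, one checks that $e^{-\pi\im\mu}Q_\nu^\mu(\cos\zeta\pm\im 0)$ equals an $n$-independent constant times $\sqrt{\tfrac{\pi}{2\sin\zeta}}\,\Gamma(\nu+\mu+1)$ times a sum whose $n$-th term carries the phase $e^{\mp\im\beta_{\mu,\nu,n}}$, plus the remainder contribution $\Phi_\pm\,\widehat{R}_N^{(Q)}(\pm\im\zeta,\mu,\nu)$, where $\Phi_\pm$ are $N$-independent phases with $\Phi_+\Phi_-=1$ and $|\Phi_\pm|=e^{\pm\Im\beta_{\mu,\nu,N}}$.

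Matching the two boundary expansions against the connection formulae then yields, after a short computation, the exact identities $\widehat{R}_N^{(\mathsf{P})}=\tfrac12\big(\Phi_+\widehat{R}_N^{(Q)}(\im\zeta)+\Phi_-\widehat{R}_N^{(Q)}(-\im\zeta)\big)$ and $\widehat{R}_N^{(\mathsf{Q})}=\tfrac{\im}{2}\big(\Phi_+\widehat{R}_N^{(Q)}(\im\zeta)-\Phi_-\widehat{R}_N^{(Q)}(-\im\zeta)\big)$, while the finite sums reassemble, via $\cos\beta_{\mu,\nu,n}=\tfrac12(e^{\im\beta}+e^{-\im\beta})$ and $\sin\beta_{\mu,\nu,n}=\tfrac{1}{2\im}(e^{\im\beta}-e^{-\im\beta})$, into the sums of \eqref{eqfp} and \eqref{eqfq}. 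For the first pair of bounds \eqref{boundpq} I would bound each $|\widehat{R}_N^{(Q)}(\pm\im\zeta)|$ by the common quantity $\mathcal B$ furnished by the second estimate of Theorem~\ref{thm211}: at $\xi=\pm\im\zeta$ one has $|e^{-\xi}/\sinh\xi|^N=\sin^{-N}\zeta$, and the $\xi$-dependent case factor reduces to $|1-e^{-2\xi}||\csc(2\Im\xi)|=|\sec\zeta|$ for $0<\zeta\le\tfrac{\pi}{4}$ or $\tfrac{3\pi}{4}\le\zeta<\pi$, and to $|1-e^{-2\xi}|=2\sin\zeta$ for $\tfrac{\pi}{4}<\zeta<\tfrac{3\pi}{4}$, which is precisely the case split of \eqref{boundpq}. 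Since $|\Phi_\pm|=e^{\pm\Im\beta_{\mu,\nu,N}}$, the triangle inequality converts the factor $\tfrac12(|\Phi_+|+|\Phi_-|)$ into $\cosh(\Im\beta_{\mu,\nu,N})$, the oscillatory factor appearing in \eqref{boundpq}; this bounds $\widehat{R}_N^{(\mathsf{P})}$ and $\widehat{R}_N^{(\mathsf{Q})}$ simultaneously.

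The sharper estimates \eqref{boundp} and \eqref{boundq} require more care, and this is where I expect the main difficulty to lie. The idea is to peel off the explicit first neglected term before estimating: writing $\widehat{R}_N^{(Q)}(\pm\im\zeta)=T_N^\pm+\widehat{R}_{N+1}^{(Q)}(\pm\im\zeta)$ with $T_N^\pm$ the $n=N$ term of \eqref{factexpQ}, the combination $\tfrac12(\Phi_+T_N^++\Phi_-T_N^-)$ collapses to the single term $a_N(\mu)\cos\beta_{\mu,\nu,N}/(\Gamma(\nu+\tfrac32+N)\sin^N\zeta)$ for $\mathsf{P}$, and to the analogous $\sin\beta_{\mu,\nu,N}$ term for $\mathsf{Q}$, matching the respective first neglected terms in \eqref{boundp} and \eqref{boundq}. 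The residual piece involves $\widehat{R}_{N+1}^{(Q)}(\pm\im\zeta)$, and the delicate point is to show that its contribution is controlled not merely by $\cosh(\Im\beta_{\mu,\nu,N})$ but by the genuinely smaller $|\sin\beta_{\mu,\nu,N}|$ (respectively $|\cos\beta_{\mu,\nu,N}|$), carrying the $(N+1)$-st factorial factor $a_{N+1}(\Re\mu)/\Gamma(\Re\nu+\tfrac52+N)$. This is the factorial-series analogue of the two-term bounds \eqref{eq17} and \eqref{eq23} for the inverse factorial expansions in Theorem~\ref{thm3}, and I would carry it out by the same device, tracking the relative phase of the $(N+1)$-st term so that the residual aligns with the $\sin$ (resp. $\cos$) component. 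Finally I would justify passing to the boundary $\Re\xi\to 0^+$ (the left-hand sides extend continuously to the cut and the bounds stay finite throughout $0<\zeta<\pi$), and note that limiting values are to be taken when $2\Re\mu$ is an odd integer, exactly as in Theorem~\ref{thm211}.
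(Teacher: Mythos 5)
Your reduction to Theorem \ref{thm211} on the boundary $\xi=\pm\im\zeta$ is sound, and for the expansions \eqref{eqfp}, \eqref{eqfq} and the first pair of bounds \eqref{boundpq} it is essentially the paper's own proof in different packaging: the paper works directly with \eqref{QhypergeoRepr2} and its $P$-analogue \eqref{PinF} and applies the bound \eqref{est2} of Theorem \ref{thm22a} to the hypergeometric remainders at the arguments $\pm\im\e^{\mp\im\zeta}/(2\sin\zeta)$, which is the same computation because $\widehat{R}_N^{(Q)}(\pm\im\zeta,\mu,\nu)=R_N^{(F)}\bigl(\pm\im\e^{\mp\im\zeta}/(2\sin\zeta),\tfrac12+\mu,\tfrac12-\mu,\nu+\tfrac32\bigr)/\Gamma\bigl(\nu+\tfrac32\bigr)$. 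Your identities $\widehat{R}_N^{(\mathsf{P})}=\tfrac12(\Phi_+R_++\Phi_-R_-)$, $\widehat{R}_N^{(\mathsf{Q})}=\tfrac{\im}{2}(\Phi_+R_+-\Phi_-R_-)$ with $R_\pm=\widehat{R}_N^{(Q)}(\pm\im\zeta,\mu,\nu)$ and $\Phi_\pm=\e^{\mp\im\beta_{\mu,\nu,0}}$, the evaluation of the case factors as $|\sec\zeta|$ and $2\sin\zeta$, and the matching of hypotheses are all correct (with the minor remark that at $\zeta=\pi/4,\,3\pi/4$ the cases of Theorem \ref{thm211} leave out the limit point and you must invoke continuity, both case values agreeing there).

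The genuine gap is in your plan for \eqref{boundp} and \eqref{boundq}. In your decomposition you peel the $n=N$ terms off \emph{both} boundary remainders, so the peeled part collapses to the exact term $a_N(\mu)\cos(\beta_{\mu,\nu,N})/\bigl(\Gamma\bigl(\nu+\tfrac32+N\bigr)\sin^N\zeta\bigr)$ and the residual is exactly $\tfrac12\bigl(\Phi_+\widehat{R}_{N+1}^{(Q)}(\im\zeta,\mu,\nu)+\Phi_-\widehat{R}_{N+1}^{(Q)}(-\im\zeta,\mu,\nu)\bigr)=\widehat{R}_{N+1}^{(\mathsf{P})}(\zeta,\mu,\nu)$; you then need this residual to be $O(|\sin(\beta_{\mu,\nu,N})|)$. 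That is false: the leading part of $\widehat{R}_{N+1}^{(\mathsf{P})}$ is $a_{N+1}(\mu)\cos(\beta_{\mu,\nu,N+1})/\bigl(\Gamma\bigl(\nu+\tfrac52+N\bigr)\sin^{N+1}\zeta\bigr)$, and $\cos(\beta_{\mu,\nu,N+1})=-\sin(\beta_{\mu,\nu,N}+\zeta)$ does not vanish when $\sin(\beta_{\mu,\nu,N})=0$; for real $\mu,\nu$ with $\beta_{\mu,\nu,N}\in\pi\mathbb{Z}$ the residual is asymptotic to $-\cos(\beta_{\mu,\nu,N})\,a_{N+1}(\mu)/\bigl(\Gamma\bigl(\nu+\tfrac52+N\bigr)\sin^{N}\zeta\bigr)\neq0$, so no bound of the form you want can hold, and your decomposition cannot yield \eqref{boundp}. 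The device of \eqref{eq17}/\eqref{eq23}, which you correctly name, is incompatible with peeling first. Writing $\Psi_\pm=\e^{\pm\im N(\zeta+\pi/2)}$, so that $\Phi_\pm=\e^{\mp\im\beta_{\mu,\nu,N}}\Psi_\pm$, one must split
\[
\widehat{R}_N^{(\mathsf{P})}=\cos (\beta_{\mu,\nu,N})\cdot\tfrac12\left(\Psi_+R_++\Psi_-R_-\right)-\im\sin (\beta_{\mu,\nu,N})\cdot\tfrac12\left(\Psi_+R_+-\Psi_-R_-\right),
\]
and peel the $n=N$ terms only inside the \emph{difference} bracket, where they cancel identically because $\Psi_+T_N^+=\Psi_-T_N^-=a_N(\mu)/\bigl(\Gamma\bigl(\nu+\tfrac32+N\bigr)\sin^N\zeta\bigr)$; then Theorem \ref{thm211} applied at level $N$ to the sum bracket and at level $N+1$ to the peeled difference bracket gives \eqref{boundp}. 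In this arrangement the $|\cos(\beta_{\mu,\nu,N})|$ term of \eqref{boundp} arises as a \emph{bound} on the full level-$N$ remainders (hence the prefactor), never as an exact peeled term, and the $|\sin(\beta_{\mu,\nu,N})|$ weight multiplies genuinely level-$(N+1)$ quantities. The same splitting with sine and cosine interchanged gives \eqref{boundq}. This is precisely what the paper does, phrased there in terms of $R_N^{(F)}$ via \eqref{eq55} and \eqref{eq61}.
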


\begin{theorem}\label{thm23} Let $0<\zeta<\pi$ and $N$ be an arbitrary non-negative integer. Let $\mu$ and $\nu$ be complex numbers satisfying $-\frac12<\Re\mu<N+\frac12$ and $|\arg(\nu+\mu+1)|<\pi$. Then the expansions \eqref{eqfp} and \eqref{eqfq} hold with the remainder estimates
\begin{equation}\label{boundpq2}
\left|\widehat{R}_N^{(\mathsf{P})}(\zeta,\mu,\nu)\right|, \left|\widehat{R}_N^{(\mathsf{Q})}(\zeta,\mu,\nu)\right|\leq B_N \left|\frac{a_N(\mu)\cosh(\Im(\beta_{\mu,\nu,N}))}{\Gamma(\nu+\frac32+N)\sin^N\zeta}\right|,
\end{equation}
\begin{equation}\label{boundp2}
\left|\widehat{R}_N^{(\mathsf{P})}(\zeta,\mu,\nu)\right| \le B_N \left| \frac{ a_N (\mu )\cos (\beta _{\mu ,\nu ,N} )}{\Gamma \left( \nu  + \frac{3}{2} + N \right)\sin ^N \zeta } \right| + B_{N + 1} \left| \frac{a_{N + 1} (\mu )\sin (\beta _{\mu ,\nu ,N} )}{\Gamma \left( \nu  + \frac{5}{2} + N \right)\sin ^{N + 1} \zeta } \right|
\end{equation}
and
\begin{equation}\label{boundq2}
\left|\widehat{R}_N^{(\mathsf{Q})}(\zeta,\mu,\nu)\right| \le B_N \left| \frac{ a_N (\mu )\sin (\beta _{\mu ,\nu ,N} )}{\Gamma \left( \nu  + \frac{3}{2} + N \right)\sin ^N \zeta } \right| + B_{N + 1} \left| \frac{a_{N + 1} (\mu )\cos (\beta _{\mu ,\nu ,N} )}{\Gamma \left( \nu  + \frac{5}{2} + N \right)\sin ^{N + 1} \zeta } \right|.
\end{equation}
Here
\[
B_N =B_N(\mu,\nu,\sigma) =\left|\frac{N}{\frac12+\mu}\right|
+\left|\frac{\Gamma\left(N+\frac12-\Re\mu\right)\Gamma\left(\nu+\frac32+N\right)}{\Gamma\left(N+\frac12-\mu\right)\Gamma(\nu+\mu+1)}
\frac{\e^{(\pi+|\sigma|)\left|\Im\mu\right|}\left(1+\frac{N}{\frac12+\mu}\right)\left(\frac12\cos\sigma\right)^{-\frac12-\Re\mu}}{%
\left((\nu+\mu+1)\cos(\theta+\sigma)\right)^{N+\frac12-\Re\mu}}
\right|,
\]
in which $\theta=\arg(\nu+\mu+1)$, $|\theta+\sigma|<\frac{\pi}{2}$, $|\sigma|<\frac{\pi}{2}$. The fractional powers are taking their principal values. In the case that $|\theta|<\frac{\pi}{2}$ we take $\theta+\sigma=0$, that is, $\cos(\theta+\sigma)=1$. Note that $B_N=\O(1)$ as $|\nu|\to+\infty$.
\end{theorem}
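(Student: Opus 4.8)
The plan is to obtain Theorem \ref{thm23} directly from the hypergeometric estimate of Theorem \ref{thm22}, in exact parallel with the derivation of Theorem \ref{thm21} for $Q_\nu ^\mu (\cosh\xi)$. The idea is to evaluate the representation \eqref{QhypergeoRepr2} on the boundary of $\mathcal D_1$ (see \eqref{RegionD}) by letting $\xi=\pm\im\zeta$. A short computation gives $\sinh(\pm\im\zeta)=\pm\im\sin\zeta$ and
\[
\frac{-\e^{\mp\im\zeta}}{2\sinh(\pm\im\zeta)}=\frac12\pm\frac{\im}{2}\cot\zeta=:z_\pm ,
\]
so that \eqref{QhypergeoRepr2} expresses the two boundary values $Q_\nu ^\mu (\cos\zeta\pm\im 0)$ through the single hypergeometric function $\hyperOlverF{\frac12+\mu}{\frac12-\mu}{\nu+\frac32}{z_\pm}$. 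I would apply Theorem \ref{thm22} with $a=\frac12+\mu$, $b=\frac12-\mu$, $c=\nu+\frac32$; its hypotheses then read $\Re a>0\Leftrightarrow-\frac12<\Re\mu$, $N>-\Re b\Leftrightarrow\Re\mu<N+\frac12$ and $|\arg(c-b)|<\pi\Leftrightarrow|\arg(\nu+\mu+1)|<\pi$, which are precisely the assumptions of Theorem \ref{thm23}.

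The decisive feature is that $\Re z_\pm=\frac12$, the borderline case permitted by the condition $\Re z\le\frac12$ in Theorem \ref{thm22}. Here $\frac12-\Re z=0$, so the inner quantity $\sqrt{(\Im z)^2+\frac14}/(\frac12-\Re z)$ is infinite and the factor $K$ collapses to $K=\frac{2}{\cos\sigma}$ (this is what forces the strict inequality $|\sigma|<\frac\pi2$ recorded in Theorem \ref{thm23}). Consequently $K^{\Re a}=\left(\frac{2}{\cos\sigma}\right)^{\frac12+\Re\mu}=\left(\frac12\cos\sigma\right)^{-\frac12-\Re\mu}$, and substituting $a,b,c$ into the constant $A_N$ of Theorem \ref{thm22} then yields verbatim the constant $B_N$ of Theorem \ref{thm23} (in particular $\e^{\pi|\Im a|+|\sigma||\Im b|}=\e^{(\pi+|\sigma|)|\Im\mu|}$ and $c-b=\nu+\mu+1$). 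Translating the hypergeometric terms back is routine: using $\left(\frac12+\mu\right)_n\left(\frac12-\mu\right)_n=(-2)^n n!\,a_n(\mu)$ and $\left(\nu+\frac32\right)_n=\Gamma\!\left(\nu+\frac32+n\right)/\Gamma\!\left(\nu+\frac32\right)$, together with the prefactor of \eqref{QhypergeoRepr2}, converts the $n$-th term into $a_n(\mu)\Gamma(\nu+\mu+1)\e^{\mp\im\beta_{\mu,\nu,n}}/(\Gamma(\nu+\frac32+n)\sin^n\zeta)$ up to a single $n$-independent phase, with $\beta_{\mu,\nu,n}$ as in \eqref{phasedef1}.

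Next I would insert these two boundary expansions into the standard formulae expressing $\mathsf P_\nu ^\mu (\cos\zeta)$ and $\mathsf Q_\nu ^\mu (\cos\zeta)$ through the boundary values $Q_\nu ^\mu (\cos\zeta\pm\im 0)$ (cf.\ \cite{NIST:DLMF}). Writing these as the symmetric and antisymmetric combinations of the two values, the conjugate phases $\e^{\mp\im\beta_{\mu,\nu,n}}$ combine into $\cos\beta_{\mu,\nu,n}$ and $\sin\beta_{\mu,\nu,n}$, reproducing the expansions \eqref{eqfp} and \eqref{eqfq}; the remainders $\widehat R_N^{(\mathsf P)}$ and $\widehat R_N^{(\mathsf Q)}$ are the corresponding combinations of the two hypergeometric remainders, each controlled by $B_N$. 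The joint estimate \eqref{boundpq2} is then immediate from the triangle inequality, since $|\e^{\pm\im\beta_{\mu,\nu,N}}|=\e^{\mp\Im\beta_{\mu,\nu,N}}$ and the average of the two bounds produces $\cosh(\Im\beta_{\mu,\nu,N})$, which dominates both $|\cos\beta_{\mu,\nu,N}|$ and $|\sin\beta_{\mu,\nu,N}|$. It is useful here that $\Im\beta_{\mu,\nu,n}=\zeta\,\Im\nu+\frac\pi2\Im\mu$ is independent of $n$, because $\zeta$ is real.

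I expect the refined bounds \eqref{boundp2} and \eqref{boundq2} to be the main obstacle. Here one must not replace $\cos\beta_{\mu,\nu,N}$ and $\sin\beta_{\mu,\nu,N}$ by $\cosh(\Im\beta_{\mu,\nu,N})$, so the crude modulus bound on the complex remainders is too lossy. The plan is to peel the first neglected term $t_N$ off each boundary remainder, $R_N^{(F)}=t_N+R_{N+1}^{(F)}$, so that the leading contribution carries the exact phase and combines into the sharp $\cos\beta_{\mu,\nu,N}$ (respectively $\sin\beta_{\mu,\nu,N}$) factor bounded through $B_N$ and $a_N(\mu)$, while the tail $R_{N+1}^{(F)}$ is bounded through $B_{N+1}$ and $a_{N+1}(\mu)$; the phase recurrence $\beta_{\mu,\nu,n+1}=\beta_{\mu,\nu,n}+\zeta+\frac\pi2$ is what rotates a cosine into a sine and produces the mixed ``$a_N\cos\beta_{\mu,\nu,N}$ plus $a_{N+1}\sin\beta_{\mu,\nu,N}$'' structure of \eqref{boundp2} (with the roles swapped in \eqref{boundq2}). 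This is entirely parallel to the passage from Theorems \ref{thm1}--\ref{thm2} to the bounds \eqref{eq17} and \eqref{eq23} of Theorem \ref{thm3}, and to the bounds \eqref{boundp}--\eqref{boundq} of Theorem \ref{thm231}; the delicate points are keeping the sharp trigonometric factors rather than their hyperbolic majorants, matching the index shift between the single complex series and the two real Ferrers series, and verifying that $B_N$ and $B_{N+1}$ attach to the correct terms. Finally, the evaluation at $\xi=\pm\im\zeta$ (equivalently the limit $\Re\xi\to0^+$) is legitimate because $z_\pm$ stays off $[1,+\infty)$, so both sides of \eqref{QhypergeoRepr2} extend continuously to the boundary of $\mathcal D_1$.
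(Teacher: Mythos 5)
Your proposal is correct and follows essentially the same route as the paper: both write $\mathsf{P}_\nu^\mu(\cos\zeta)$ and $\mathsf{Q}_\nu^\mu(\cos\zeta)$ as phase-weighted combinations of the two hypergeometric functions evaluated at the boundary arguments $\tfrac12\pm\tfrac{\im}{2}\cot\zeta$ (the paper via \eqref{Plim}, \eqref{PinF} and \eqref{eq20}, you via the boundary values of $Q_\nu^\mu$ — a cosmetic difference), and then apply Theorem \ref{thm22} with $a=\tfrac12+\mu$, $b=\tfrac12-\mu$, $c=\nu+\tfrac32$, the factor $K$ collapsing to $2/\cos\sigma$ because $\Re z=\tfrac12$. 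Your peeling step $R_N^{(F)}=t_N+R_{N+1}^{(F)}$ applied to the antisymmetric combination is exactly the paper's relation \eqref{eq61}, whose peeled first terms cancel in that combination and thereby produce the mixed bounds \eqref{boundp2} and \eqref{boundq2}.
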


\section{Gegenbauer function for large \texorpdfstring{$\nu$}{nu} and fixed \texorpdfstring{$\lambda$}{lambda}: error bounds}\label{GegenbauerBounds}

As a corollary of Theorem \ref{thm1} and the relation \eqref{eq69}, we provide error bounds for the inverse factorial expansion \eqref{formal14} of the Gegenbauer function as given in the following theorem.

\begin{corollary}\label{gegen1} Let $\xi \in \mathcal{D}_1$ and $N$, $M$ be arbitrary non-negative integers. Let $\lambda$ and $\nu$ be complex numbers satisfying $\left|\Re\lambda - \frac{1}{2}\right| <\min \left( N + \frac{1}{2},M + \frac{1}{2} \right)$ and $\Re\nu  > \max \left( N - \Re\lambda ,M - \Re\lambda \right)$. Then
\begin{gather}\label{Cexpansion}
\begin{split}
& C_\nu ^{(\lambda )} (\cosh \xi ) = \frac{\e^{\xi (\nu  + \lambda )} }{\Gamma (\lambda )\left(2\sinh \xi \right)^\lambda  }\left( \sum\limits_{n = 0}^{N - 1} a_n \!
\left( \lambda  - \tfrac{1}{2} \right)\frac{\Gamma (\nu  + \lambda -n)}{\Gamma (\nu+1)}\left( \frac{-\e^{ - \xi } }{\sinh \xi } \right)^n+ R_N^{(C1)} (\xi ,\lambda ,\nu ) \right)
\\  & + K(\xi ,\lambda )\frac{\e^{ - \xi (\nu  + \lambda )} }{\Gamma (\lambda )\left(2\sinh \xi \right)^\lambda  }\left( \sum\limits_{m = 0}^{M - 1} a_m\!
\left( \lambda  - \tfrac{1}{2} \right)\frac{\Gamma (\nu  + \lambda  - m)}{\Gamma (\nu  + 1)}\left( \frac{\e^\xi }{\sinh \xi } \right)^m   
+ R_M^{(C2)} (\xi ,\lambda ,\nu ) \right),
\end{split}
\end{gather}
where $K(\xi ,\lambda )$ is defined in \eqref{phasedef2}, and the remainder terms satisfy the estimates
\begin{align*}
\left| R_N^{(C1)} (\xi ,\lambda ,\nu ) \right| \le \; & \left|\frac{\sin (\pi \lambda )}{\sin (\pi \Re\lambda )}\right|\left| a_N \!\left(\Re\lambda  - \tfrac{1}{2}  \right) \right|  
\frac{\Gamma \left( \Re\nu+\Re\lambda - N \right)}{|\Gamma (\nu  + 1)|}   \left| \frac{\e^{ - \xi } }{\sinh \xi } \right|^N \\ 
& \times \begin{cases} 1 & \text{if }\; \Re (\e^{2\xi } ) \le 1, \\ \min\big(\left| 1 - \e^{ - 2\xi } \right|\left| \csc (2\Im\xi ) \right|,1 + \chi\big(N+\tfrac{1}{2}\big)\big) 
& \text{if } \; \Re (\e^{2\xi } ) > 1, \end{cases}
\end{align*}
and
\[
\left| R_M^{(C2)} (\xi ,\lambda ,\nu ) \right| \le \left|\frac{\sin (\pi \lambda )}{\sin (\pi \Re\lambda )}\right|\left|a_M\! \left(\Re\lambda   - \tfrac{1}{2} \right)\right|\frac{\Gamma \left( \Re\nu+\Re\lambda - M \right)}{|\Gamma (\nu  + 1)|} \left| \frac{\e^\xi}{\sinh \xi } \right|^M .
\]
If $\Re\lambda $ is an integer, then the limiting values have to be taken in these bounds. The fractional powers are defined to be positive for positive real $\xi$ and are defined by continuity elsewhere. In addition, the remainder term $R_M^{(C2)} (\xi ,\lambda,\nu)$ does not exceed the corresponding first neglected term in absolute value and has the same sign provided that $\xi$ is positive, $\lambda$ and $\nu$ are real, and $\left| \lambda - \frac{1}{2}\right| <M + \frac{1}{2}$ and $\nu > M - \lambda$.
\end{corollary}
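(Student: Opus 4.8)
The plan is to derive everything from the defining relation \eqref{eq69}, which exhibits $C_\nu^{(\lambda)}(\cosh\xi)$ as a constant multiple of $P_{\nu+\lambda-\frac12}^{\frac12-\lambda}(\cosh\xi)$, and then feed the parameter choice $\mu=\frac12-\lambda$, together with the shifted degree $\nu+\lambda-\frac12$ in place of the degree in Theorem \ref{thm1}. First I would record the elementary identities that make the substitution close up: since $z=\cosh\xi$ gives $\sqrt{z^2-1}=\sinh\xi$, and since $a_n(\mu)$ depends only on $\mu^2$, so that $a_n(\frac12-\lambda)=a_n(\lambda-\frac12)$, while $(\nu+\lambda-\frac12)-n+\frac12=\nu+\lambda-n$ and $(\nu+\lambda-\frac12)-(\frac12-\lambda)+1=\nu+2\lambda$. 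The last identity is the crucial one: the factor $\Gamma(\nu+2\lambda)$ in the numerator of the prefactor in \eqref{eq69} cancels exactly against the factor $\Gamma(\nu+2\lambda)$ appearing as $\Gamma(\nu_P-\mu_P+1)$ in the denominator of the prefactor in \eqref{eq3}.

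Next I would carry out the routine consolidation of the two prefactors. Multiplying the constant in \eqref{eq69} by the exponential prefactor $\e^{(\nu_P+\frac12)\xi}/(\Gamma(\nu_P-\mu_P+1)\sqrt{2\pi\sinh\xi})$ of Theorem \ref{thm1}, using $\nu_P+\frac12=\nu+\lambda$ and collecting the powers of $2$ and of $\sinh\xi$, produces exactly $\e^{\xi(\nu+\lambda)}/(\Gamma(\lambda)(2\sinh\xi)^\lambda)$ apart from a factor $1/\Gamma(\nu+1)$, which I would absorb into the series so as to match the $\Gamma(\nu+\lambda-n)/\Gamma(\nu+1)$ occurring in \eqref{Cexpansion}. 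Consequently the remainder terms are related by $R_N^{(C1)}=R_N^{(P1)}/\Gamma(\nu+1)$ and $R_M^{(C2)}=R_M^{(P2)}/\Gamma(\nu+1)$, which is precisely the source of the factor $1/|\Gamma(\nu+1)|$ in the stated bounds. I would then verify that the phase factor $C(\xi,\frac12-\lambda)$ of \eqref{eq6} coincides with $K(\xi,\lambda)$ of \eqref{phasedef2}: for $\xi>0$ one has $\sin(\pi(\frac12-\lambda))=\cos(\pi\lambda)$, while for $0<\pm\Im\xi<\pi$ a short computation gives $\pm\im\e^{\mp\pi\im(\frac12-\lambda)}=\e^{\pm\pi\im\lambda}$.

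It then remains to translate the hypotheses and the trigonometric factors. The conditions $|\Re\mu_P|<\min(N+\frac12,M+\frac12)$ and $\Re\nu_P>\max(N-\frac12,M-\frac12)$ become $|\Re\lambda-\frac12|<\min(N+\frac12,M+\frac12)$ and $\Re\nu>\max(N-\Re\lambda,M-\Re\lambda)$, as asserted, and the exceptional case ``$2\Re\mu_P$ an odd integer'' becomes ``$\Re\lambda$ an integer''. Using $\cos(\pi\mu_P)=\cos(\pi(\frac12-\lambda))=\sin(\pi\lambda)$ (and likewise with $\Re$), the ratio $\cos(\pi\mu_P)/\cos(\pi\Re\mu_P)$ turns into $\sin(\pi\lambda)/\sin(\pi\Re\lambda)$, while $\Gamma(\Re\nu_P-N+\frac12)=\Gamma(\Re\nu+\Re\lambda-N)$; thus the bounds of Theorem \ref{thm1}, divided by $|\Gamma(\nu+1)|$, reproduce those claimed here verbatim. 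For the final sign statement I would note that the hypotheses that $\lambda,\nu$ are real with $|\lambda-\frac12|<M+\frac12$ and $\nu>M-\lambda$ force $\nu>-1$, hence $\Gamma(\nu+1)>0$, so that the sign-definiteness of $R_M^{(P2)}$ transfers unchanged to $R_M^{(C2)}$.

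I expect the only genuinely delicate point to be the consistent tracking of branches of the fractional powers. Theorem \ref{thm1} involves only $\sqrt{2\pi\sinh\xi}$, a half-integer power, whereas the prefactor in \eqref{eq69} carries the non-integer power $(2\sqrt{z^2-1})^{\lambda-\frac12}$; I would fix both by the stated convention (positive for positive real $\xi$, extended by continuity) and check that their product yields a single-valued $(2\sinh\xi)^\lambda$ obeying the same convention, so that no spurious phase is introduced in passing from \eqref{eq3} to \eqref{Cexpansion}.
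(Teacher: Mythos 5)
Your proposal is correct and is exactly the paper's route: the paper states Corollary \ref{gegen1} as an immediate consequence of Theorem \ref{thm1} and the relation \eqref{eq69}, i.e.\ the substitution $\mu=\frac{1}{2}-\lambda$, $\nu\mapsto\nu+\lambda-\frac{1}{2}$, which is precisely what you carry out, including the cancellation of $\Gamma(\nu+2\lambda)$, the identification $C\left(\xi,\frac{1}{2}-\lambda\right)=K(\xi,\lambda)$, the translation of the hypotheses, and the positivity of $\Gamma(\nu+1)$ needed for the sign statement. Your verification of these details (which the paper leaves implicit) is accurate, so nothing further is required.
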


Note that if $\nu$ is a positive integer, $C_\nu ^{(\lambda )}(\cosh\xi)$ is a polynomial in $\cosh\xi$. Thus, we can extend this result to the larger set $\widetilde {\mathcal{D}_1} =\left\{\xi : \Re\xi >0, -\pi < \Im \xi \leq\pi \right\}$. The function $\cosh\xi$ is a continuous bijection between $\widetilde {\mathcal{D}_1}$ and $\mathbb{C}\setminus \left[-1,1\right]$.

An immediate corollary of Theorem \ref{thm3} and the relation \eqref{eq66} are the following error bounds for the inverse factorial expansion \eqref{formal15} of the Gegenbauer function.

\begin{corollary}\label{gegen2} Let $0<\zeta<\pi$ and $N$ be an arbitrary non-negative integer. Let $\lambda$ and $\nu$ be complex numbers satisfying 
$\left| \Re\lambda  - \frac{1}{2} \right| < N + \frac{1}{2}$ and $\Re\nu  > N - \Re\lambda$. Then
\begin{equation}\label{Cexp2}
C_\nu ^{(\lambda )} (\cos \zeta ) = \frac{2}{\Gamma (\lambda )\left(2\sin\zeta\right)^\lambda }\left( \sum\limits_{n = 0}^{N - 1}a_n \!\left( \lambda  - \tfrac{1}{2} \right)
\frac{\Gamma (\nu  + \lambda  - n)}{\Gamma (\nu  + 1)}\frac{\cos (\gamma _{\lambda,\nu, n} )}{\sin ^n \zeta }  + R_N^{(C)} (\zeta ,\lambda ,\nu ) \right),
\end{equation}
where $\gamma_{\lambda,\nu,n}$ is defined in \eqref{phasedef2}, and the remainder term satisfies the estimate
\[
\left| R_N^{(C)} (\zeta ,\lambda ,\nu ) \right| \le \left|\frac{\sin (\pi \lambda )}{\sin (\pi \Re\lambda )}\right| \left| a_N \!\left( \Re\lambda  - \tfrac{1}{2} \right) \right|
\frac{\Gamma (\Re\nu+\Re\lambda  - N)}{\left| {\Gamma (\nu  + 1)} \right|}\frac{\cosh (\Im (\gamma _{\lambda ,\nu ,N} ))}{\sin ^N \zeta}.
\]
With the modified conditions $\left| \Re\lambda  - \frac{1}{2} \right| < N + \frac{1}{2}$ and $\Re\nu  > N - \Re\lambda +1$, we also have
\begin{align*}
\left| R_N^{(C)} (\zeta ,\lambda ,\nu ) \right| \le \; & \left|\frac{\sin (\pi \lambda )}{\sin (\pi \Re\lambda )}\right|\left| a_N \!\left( \Re\lambda  - \tfrac{1}{2} \right) \right|
\frac{\Gamma (\Re\nu+\Re\lambda  - N)}{\left| \Gamma (\nu  + 1) \right|}\frac{\left| \cos (\gamma _{\lambda ,\nu ,N} )\right|}{\sin ^N \zeta } \\ 
& + \left|\frac{\sin (\pi \lambda )}{\sin (\pi \Re\lambda )}\right|\left| a_{N + 1} \!\left( \Re\lambda  - \tfrac{1}{2} \right) \right|
\frac{\Gamma (\Re\nu+\Re\lambda - N - 1)}{\left| \Gamma (\nu  + 1) \right|}\frac{\left| \sin (\gamma _{\lambda ,\nu ,N} ) \right|}{\sin ^{N + 1} \zeta } .
\end{align*}
If $\Re\lambda $ is an integer, then the limiting values have to be taken in these bounds.
\end{corollary}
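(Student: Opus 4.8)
The plan is to obtain Corollary \ref{gegen2} as a direct consequence of Theorem \ref{thm3} via the defining relation \eqref{eq66}. First I would set $x=\cos\zeta$ in \eqref{eq66}, so that $\sqrt{1-x^2}=\sin\zeta$, thereby writing $C_\nu^{(\lambda)}(\cos\zeta)$ as an explicit prefactor times the Ferrers function $\mathsf{P}_{\nu+\lambda-\frac12}^{\frac12-\lambda}(\cos\zeta)$. I then apply the expansion \eqref{eq15} of Theorem \ref{thm3} with the order $\frac12-\lambda$ in place of $\mu$ and the degree $\nu+\lambda-\frac12$ in place of $\nu$.

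The next step is to check that, under this substitution, every ingredient of \eqref{eq15} matches the corresponding ingredient of \eqref{Cexp2}. The phase transforms correctly: from \eqref{phasedef1} and \eqref{phasedef2} one finds $\alpha_{\frac12-\lambda,\,\nu+\lambda-\frac12,\,n}=(\nu+\lambda-n)\zeta-(\lambda-n)\frac\pi2=\gamma_{\lambda,\nu,n}$. Since $a_n(\mu)$ depends only on $\mu^2$, it is even in $\mu$, so $a_n(\frac12-\lambda)=a_n(\lambda-\frac12)$. The Gamma factors reduce via $\Gamma(\nu+\lambda-\frac12-(\frac12-\lambda)+1)=\Gamma(\nu+2\lambda)$ and $\Gamma(\nu+\lambda-\frac12-n+\frac12)=\Gamma(\nu+\lambda-n)$. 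Combining the internal prefactor $\sqrt{2/(\pi\sin\zeta)}/\Gamma(\nu+2\lambda)$ from \eqref{eq15} with the factor $\sqrt\pi\,\Gamma(\nu+2\lambda)/(\Gamma(\lambda)\Gamma(\nu+1)(2\sin\zeta)^{\lambda-\frac12})$ from \eqref{eq66}, the two copies of $\Gamma(\nu+2\lambda)$ cancel, and a short computation collapsing $(2\sin\zeta)^{\lambda-\frac12}\sqrt{\sin\zeta}=2^{\lambda-\frac12}(\sin\zeta)^\lambda$ produces exactly the prefactor $2/(\Gamma(\lambda)(2\sin\zeta)^\lambda)$ of \eqref{Cexp2}, multiplied by a residual $1/\Gamma(\nu+1)$. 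Pulling this residual factor inside the parentheses turns $\Gamma(\nu+\lambda-n)$ into $\Gamma(\nu+\lambda-n)/\Gamma(\nu+1)$ as in \eqref{Cexp2}, and identifies the remainder as $R_N^{(C)}(\zeta,\lambda,\nu)=R_N^{(\mathsf P)}(\zeta,\tfrac12-\lambda,\nu+\lambda-\tfrac12)/\Gamma(\nu+1)$.

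With this identification in hand, the two displayed error estimates follow by dividing the bounds \eqref{eq16} and \eqref{eq17} of Theorem \ref{thm3} by $|\Gamma(\nu+1)|$ and performing the same substitution. Here $\cos(\pi\mu)=\cos(\pi(\frac12-\lambda))=\sin(\pi\lambda)$ and $\cos(\pi\Re\mu)=\sin(\pi\Re\lambda)$ convert the trigonometric prefactor into $\sin(\pi\lambda)/\sin(\pi\Re\lambda)$, while $\Gamma(\Re\nu+\Re\lambda-\frac12-N+\frac12)=\Gamma(\Re\nu+\Re\lambda-N)$ and the analogous identity with $N$ replaced by $N+1$ yield the Gamma factors in the stated bounds. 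I would also verify that the hypotheses translate correctly: $|\Re\mu|<N+\frac12$ becomes $|\Re\lambda-\frac12|<N+\frac12$, the condition $\Re\nu>N-\frac12$ becomes $\Re\nu>N-\Re\lambda$, and the modified condition $\Re\nu>N+\frac12$ becomes $\Re\nu>N-\Re\lambda+1$; moreover $2\Re\mu=1-2\Re\lambda$ is an odd integer precisely when $\Re\lambda\in\mathbb Z$, matching the limiting-value proviso.

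The argument is essentially bookkeeping, so I expect no genuine analytic obstacle. The one place demanding care is the prefactor reconciliation in the second paragraph, where one must correctly track both the cancellation of $\Gamma(\nu+2\lambda)$ and the emergence of the factor $1/\Gamma(\nu+1)$ that is responsible for the $1/|\Gamma(\nu+1)|$ appearing in every displayed bound. Once that factor is properly accounted for, both estimates drop out of Theorem \ref{thm3} verbatim.
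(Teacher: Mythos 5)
Your proposal is correct and is precisely the paper's own argument: Corollary \ref{gegen2} is obtained there as an immediate consequence of Theorem \ref{thm3} applied with $\mu=\tfrac12-\lambda$ and degree $\nu+\lambda-\tfrac12$ through the relation \eqref{eq66}, exactly as you describe. Your bookkeeping — the evenness of $a_n$, the phase identity $\alpha_{\frac12-\lambda,\,\nu+\lambda-\frac12,\,n}=\gamma_{\lambda,\nu,n}$, the prefactor collapse producing $2/(\Gamma(\lambda)(2\sin\zeta)^{\lambda})$ with the residual $1/\Gamma(\nu+1)$, and the translation of the hypotheses and of the odd-integer proviso — is accurate and complete.
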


An immediate corollary of Theorems \ref{thm231}, \ref{thm23} and the relation \eqref{eq66} are the following error bounds for the factorial expansion \eqref{formal16} of the Gegenbauer function.

\begin{corollary}\label{gegen3} Let $0<\zeta<\pi$ and $N$ be an arbitrary non-negative integer. Let $\lambda$ and $\nu$ be complex numbers satisfying $0<\Re\lambda <N+1$ and $\Re \nu> -1$. Then
\begin{equation}\label{eq67}
C_\nu ^{(\lambda )} (\cos \zeta ) = \frac{2}{\Gamma (\lambda )(2\sin \zeta )^\lambda}\left( \sum\limits_{n = 0}^{N - 1} a_n\! \left( \lambda  - \tfrac{1}{2} \right)\frac{\Gamma (\nu  + 2\lambda )}{\Gamma \left( \nu  + \lambda  + n + 1 \right)}\frac{\cos (\delta _{\lambda ,\nu ,n} )}{\sin ^n \zeta }  + \widehat{R}_N^{(C)} (\zeta ,\lambda ,\nu ) \right),
\end{equation}
where $\delta_{\lambda,\nu,n}$ is defined in \eqref{phasedef2}, and the remainder term satisfies the estimates
\begin{gather}\label{eq68}
\begin{split}
\left|\widehat{R}_N^{(C)}(\zeta,\lambda,\nu)\right| \leq \; & \left| \frac{\sin (\pi \lambda )}{\sin (\pi \Re \lambda )} \right|\left| \frac{\Gamma (\Re \lambda )\Gamma (\Re \nu  + 1)}{\Gamma (\lambda )\Gamma (\nu  + 1)} \right|  \left| a_N \! \left( \Re \lambda  - \tfrac{1}{2} \right) \right|\frac{\left| \Gamma (\nu  + 2\lambda ) \right|}{\Gamma \left( \Re \nu  + \Re \lambda  + N + 1 \right)}\\ & \times \frac{\cosh (\Im (\delta _{\lambda ,\nu ,N} ))}{\sin ^N \zeta } \times \begin{cases}  \left| \sec \zeta  \right| & \text{ if } \;
0 < \zeta  \le \frac{\pi}{4}\; \text{ or } \; \frac{3\pi}{4} \le \zeta  < \pi, \\ 2\sin \zeta  & \text{ if } \; \frac{\pi}{4} < \zeta  < \frac{3\pi}{4}, \end{cases}
\end{split}
\end{gather}
and
\begin{align*}
\left|\widehat{R}_N^{(C)}(\zeta,\lambda,\nu)\right| \leq \; & \left| \frac{\sin (\pi \lambda )}{\sin (\pi \Re \lambda )} \right|\left| \frac{\Gamma (\Re \lambda )\Gamma (\Re \nu  + 1)}{\Gamma (\lambda )\Gamma (\nu  + 1)} \right| 
\left(\left| a_N \! \left( \Re \lambda  - \tfrac{1}{2} \right) \right|\frac{\left| \Gamma (\nu  + 2\lambda ) \right|}{\Gamma \left( \Re \nu  + \Re \lambda  + N + 1 \right)}\frac{|\cos (\delta _{\lambda ,\nu ,N} )|}{\sin ^N \zeta } \right.\\ & \left.  + \left| a_{N+1} \! \left( \Re \lambda  - \tfrac{1}{2} \right) \right|\frac{\left| \Gamma (\nu  + 2\lambda ) \right|}{\Gamma \left( \Re \nu  + \Re \lambda  + N + 2 \right)}\frac{|\sin (\delta _{\lambda ,\nu ,N} )|}{\sin ^{N+1} \zeta }\right)
\\ & \times \begin{cases}  \left| \sec \zeta  \right| & \text{ if } \;
0 < \zeta  \le \frac{\pi}{4}\; \text{ or } \; \frac{3\pi}{4} \le \zeta  < \pi, \\ 2\sin \zeta  & \text{ if } \; \frac{\pi}{4} < \zeta  < \frac{3\pi}{4}.\end{cases}
\end{align*}
If $\Re\lambda $ is an integer, then the limiting values have to be taken in these bounds.
\end{corollary}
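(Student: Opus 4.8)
The plan is to obtain Corollary \ref{gegen3} as a direct substitution into the relation \eqref{eq66}, combined with the factorial expansion of the Ferrers function $\mathsf{P}$ furnished by Theorem \ref{thm231}. First I would set $x=\cos\zeta$ in \eqref{eq66}; since $0<\zeta<\pi$ we have $\sqrt{1-x^2}=\sin\zeta>0$, so no branch ambiguity arises, and the relation becomes
\[
C_\nu^{(\lambda)}(\cos\zeta)=\frac{\sqrt\pi\,\Gamma(\nu+2\lambda)}{\Gamma(\lambda)\Gamma(\nu+1)(2\sin\zeta)^{\lambda-1/2}}\,\mathsf{P}_{\nu+\lambda-\frac12}^{\frac12-\lambda}(\cos\zeta).
\]
I would then apply Theorem \ref{thm231} to the Ferrers function on the right with the parameter identification $\tilde\nu=\nu+\lambda-\frac12$ and $\tilde\mu=\frac12-\lambda$.

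A preliminary step is to confirm that the hypotheses transfer. Theorem \ref{thm231} requires $-N-\frac12<\Re\tilde\mu<\frac12$ and $\Re\tilde\nu>-\Re\tilde\mu-1$; substituting $\Re\tilde\mu=\frac12-\Re\lambda$ and $\Re\tilde\nu=\Re\nu+\Re\lambda-\frac12$ turns these precisely into $0<\Re\lambda<N+1$ and $\Re\nu>-1$, which are the stated hypotheses of the corollary.

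The bulk of the work is routine simplification, which I would organise around three observations. (i) The coefficient $a_n$ depends on its argument only through its square, so $a_n(\frac12-\lambda)=a_n(\lambda-\frac12)$. (ii) The gamma factors collapse: $\Gamma(\tilde\nu+\tilde\mu+1)=\Gamma(\nu+1)$, which cancels the $\Gamma(\nu+1)$ in the denominator of the prefactor from \eqref{eq66}, while $\Gamma(\tilde\nu+\frac32+n)=\Gamma(\nu+\lambda+n+1)$ reproduces the denominator appearing in \eqref{eq67}; moreover, combining the prefactor $\sqrt\pi/(2\sin\zeta)^{\lambda-1/2}$ with the factor $\sqrt{2/(\pi\sin\zeta)}$ supplied by Theorem \ref{thm231} gives $2/(2\sin\zeta)^\lambda$, matching \eqref{eq67}. (iii) The phase reduces correctly: inserting the substituted parameters into $\beta_{\mu,\nu,n}$ from \eqref{phasedef1} yields $\beta_{\tilde\mu,\tilde\nu,n}=(\nu+\lambda+n)\zeta-(\lambda-n)\frac\pi2=\delta_{\lambda,\nu,n}$, as defined in \eqref{phasedef2}. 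Reading off the remainder from the two representations then forces the factor $\Gamma(\nu+2\lambda)$ to be distributed into both the summand (producing the $\Gamma(\nu+2\lambda)$ visible in \eqref{eq67}) and the error term, giving $\widehat{R}_N^{(C)}(\zeta,\lambda,\nu)=\Gamma(\nu+2\lambda)\,\widehat{R}_N^{(\mathsf{P})}(\zeta,\tilde\mu,\tilde\nu)$.

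Finally I would transfer the two error estimates. Multiplying the bounds \eqref{boundpq} and \eqref{boundp} of Theorem \ref{thm231} by $|\Gamma(\nu+2\lambda)|$ and simplifying the constant factors under the substitution yields the two displayed bounds of the corollary: here $\cos(\pi\tilde\mu)=\sin(\pi\lambda)$ and $\cos(\pi\Re\tilde\mu)=\sin(\pi\Re\lambda)$ produce the ratio $\sin(\pi\lambda)/\sin(\pi\Re\lambda)$, while $\Gamma(\frac12-\Re\tilde\mu)=\Gamma(\Re\lambda)$, $\Gamma(\frac12-\tilde\mu)=\Gamma(\lambda)$ and $\Gamma(\Re\tilde\nu+\Re\tilde\mu+1)=\Gamma(\Re\nu+1)$ produce the gamma ratio $\Gamma(\Re\lambda)\Gamma(\Re\nu+1)/(\Gamma(\lambda)\Gamma(\nu+1))$, exactly as in \eqref{eq68}; the geometric $\zeta$-dependent case distinction carries over unchanged. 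I expect no genuine obstacle, the proof being essentially immediate once the machinery is in place. The only point demanding care is the bookkeeping of the half-integer shifts in the degree and order, the use of the evenness of $a_n$, and the tracking of where the factor $\Gamma(\nu+2\lambda)$ is absorbed into the summand versus the remainder.
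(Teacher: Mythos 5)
Your proposal is correct and is exactly the argument the paper intends: the paper derives Corollary \ref{gegen3} as an immediate consequence of the relation \eqref{eq66} and Theorem \ref{thm231}, with the same parameter identification $\tilde\mu=\frac12-\lambda$, $\tilde\nu=\nu+\lambda-\frac12$, the evenness of $a_n$, the phase identity $\beta_{\tilde\mu,\tilde\nu,n}=\delta_{\lambda,\nu,n}$, and the absorption $\widehat{R}_N^{(C)}=\Gamma(\nu+2\lambda)\widehat{R}_N^{(\mathsf{P})}$ that you describe. All your hypothesis translations and gamma-factor simplifications check out, so nothing further is needed.
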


\begin{corollary}\label{gegen4} Let $0<\zeta<\pi$ and $N$ be an arbitrary non-negative integer. Let $\lambda$ and $\nu$ be complex numbers satisfying $- N < \Re \lambda < 1$ and $|\arg(\nu+1)|<\pi$. Then the expansion \eqref{eq67} holds with the remainder estimates
\[
\left|\widehat{R}_N^{(C)}(\zeta,\lambda,\nu)\right| \leq K_N \left| a_N\! \left( \lambda  - \tfrac{1}{2} \right)\frac{\Gamma (\nu  + 2\lambda )}{\Gamma (\nu  + \lambda  + N + 1)}\frac{\cosh (\Im (\delta _{\lambda ,\nu ,N} ))}{\sin ^N \zeta } \right|
\]
and
\begin{align*}
\left|\widehat{R}_N^{(C)}(\zeta,\lambda,\nu)\right| \le\; & K_N \left| a_N\! \left( \lambda  - \tfrac{1}{2}\right)\frac{\Gamma (\nu  + 2\lambda )}{\Gamma (\nu  + \lambda  + N + 1)}\frac{\cos (\delta _{\lambda ,\nu ,N} )}{\sin ^N \zeta } \right| \\& + K_{N + 1} \left| a_{N + 1} \! \left( \lambda  - \tfrac{1}{2} \right)\frac{\Gamma (\nu  + 2\lambda )}{\Gamma (\nu  + \lambda  + N + 2)}\frac{\sin (\delta _{\lambda ,\nu ,N} )}{\sin ^{N + 1} \zeta } \right|.
\end{align*}
Here
\[
K_N =K_N(\lambda,\nu,\sigma) =\left| \frac{N}{1 - \lambda } \right| + \left| \frac{\Gamma (\Re \lambda +N)\Gamma (\nu  + \lambda  + N + 1)}{\Gamma (\lambda +N)\Gamma (\nu  + 1)}\frac{\e^{(\pi+|\sigma|) \left| \Im \lambda  \right|} \left( 1 + \frac{N}{1 - \lambda } \right)\left( \frac{1}{2}\cos \sigma  \right)^{\Re \lambda  - 1} }{((\nu  + 1)\cos (\theta  + \sigma ))^{\Re \lambda +N} } \right|,
\]
in which $\theta=\arg(\nu+1)$, $|\theta+\sigma|<\frac{\pi}{2}$, $|\sigma|<\frac{\pi}{2}$. The fractional powers are taking their principal values. In the case that $|\theta|<\frac{\pi}{2}$ we take $\theta+\sigma=0$, that is, $\cos(\theta+\sigma)=1$. Note that $K_N=\O(1)$ as $|\nu|\to+\infty$.
\end{corollary}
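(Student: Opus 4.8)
The plan is to obtain Corollary \ref{gegen4} as a direct consequence of Theorem \ref{thm23} via the representation \eqref{eq66}, which expresses the Gegenbauer function $C_\nu^{(\lambda)}(\cos\zeta)$ in terms of the Ferrers function $\mathsf{P}$. Concretely, I would apply Theorem \ref{thm23} with the order and degree replaced by $\mu = \frac{1}{2} - \lambda$ and $\nu\mapsto\nu+\lambda-\frac12$, which are exactly the parameters appearing on the right-hand side of \eqref{eq66}.

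First I would check that the hypotheses translate correctly. With these substitutions one has $\left(\nu+\lambda-\frac12\right)+\left(\frac12-\lambda\right)+1 = \nu+1$ and $\left(\nu+\lambda-\frac12\right)+\frac32+n = \nu+\lambda+n+1$, so the condition $-\frac12 < \Re\mu < N+\frac12$ of Theorem \ref{thm23} becomes $-N < \Re\lambda < 1$, and $|\arg(\nu+\mu+1)|<\pi$ becomes $|\arg(\nu+1)|<\pi$, matching the hypotheses of the corollary. I would also record two elementary identities: since $a_n(\mu)$ depends only on $\mu^2$, we have $a_n\!\left(\frac12-\lambda\right) = a_n\!\left(\lambda-\frac12\right)$; and a short computation from \eqref{phasedef1} and \eqref{phasedef2} shows that the phase $\beta_{\mu,\nu,n}$ evaluated at these parameters equals $\delta_{\lambda,\nu,n}$.

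Next I would substitute the factorial expansion \eqref{eqfp} into \eqref{eq66} and simplify. Using $\sqrt{1-\cos^2\zeta} = \sin\zeta$, the product of the prefactor in \eqref{eq66} with $\sqrt{2/(\pi\sin\zeta)}\,\Gamma(\nu+1)$ from \eqref{eqfp} collapses (after cancellation of $\sqrt\pi$ and $\Gamma(\nu+1)$) to $\frac{2\Gamma(\nu+2\lambda)}{\Gamma(\lambda)(2\sin\zeta)^\lambda}$, which is precisely the prefactor of \eqref{eq67} with the factor $\Gamma(\nu+2\lambda)$ pulled outside the sum. Matching the finite sums then identifies the Gegenbauer remainder as $\widehat{R}_N^{(C)}(\zeta,\lambda,\nu) = \Gamma(\nu+2\lambda)\,\widehat{R}_N^{(\mathsf{P})}\!\left(\zeta,\frac12-\lambda,\nu+\lambda-\frac12\right)$.

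The two stated error bounds then follow immediately by multiplying the estimates \eqref{boundpq2} and \eqref{boundp2} of Theorem \ref{thm23} by $|\Gamma(\nu+2\lambda)|$ and applying the same substitution. The only point requiring care is the bookkeeping that $B_N(\mu,\nu,\sigma)$ becomes $K_N(\lambda,\nu,\sigma)$ under $\mu\mapsto\frac12-\lambda$, $\nu\mapsto\nu+\lambda-\frac12$: one verifies $\frac12+\mu\mapsto 1-\lambda$, $N+\frac12-\Re\mu\mapsto N+\Re\lambda$, $N+\frac12-\mu\mapsto N+\lambda$, $-\frac12-\Re\mu\mapsto\Re\lambda-1$, and $|\Im\mu|=|\Im\lambda|$, whence $B_N\mapsto K_N$ and likewise $B_{N+1}\mapsto K_{N+1}$. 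Since the whole argument is a substitution, there is no genuine analytic obstacle; the work is entirely in simplifying the gamma-function prefactors and confirming the term-by-term agreement of the bounds.
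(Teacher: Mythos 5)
Your proposal is correct and is exactly the route the paper takes: the paper presents Corollary \ref{gegen4} as an immediate consequence of Theorem \ref{thm23} combined with the relation \eqref{eq66}, i.e., applying \eqref{eqfp} with $\mu=\frac12-\lambda$ and degree $\nu+\lambda-\frac12$, so that $\nu+\mu+1\mapsto\nu+1$, $\beta_{\mu,\nu,n}\mapsto\delta_{\lambda,\nu,n}$, $a_n\bigl(\tfrac12-\lambda\bigr)=a_n\bigl(\lambda-\tfrac12\bigr)$, $\widehat{R}_N^{(C)}=\Gamma(\nu+2\lambda)\,\widehat{R}_N^{(\mathsf{P})}$, and $B_N\mapsto K_N$. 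Your bookkeeping of the hypotheses, the prefactor simplification, and the translation of \eqref{boundpq2} and \eqref{boundp2} into the two stated bounds all check out.
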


\section{Proof of Theorems \ref{thm22a} and \ref{thm22}}\label{SecHypRepresentation}

In this section, we prove the bounds for the large-$c$ expansion of the hypergeometric function given in Theorems \ref{thm22a}--\ref{thm22}. Throughout the section, we will assume that $c$ is not zero or a negative integer. We begin with the proof of the estimate \eqref{est1}. Starting with the standard integral representation \cite[\href{https://dlmf.nist.gov/15.6.E1}{Eq.\ 15.6.1}]{NIST:DLMF} and using Taylor's formula with integral remainder for $\left(1-zt\right)^{-a}$, we obtain the truncated expansion \eqref{hypergeomexp} with
\begin{gather}\label{remainder}
\begin{split}
R_N^{(F)}(a,b,c, z) = \frac{(a)_N (b)_N }{(c)_N N!}z^N & \frac{N\Gamma (c + N)}{\Gamma (b + N)\Gamma (c - b)} \\ & \times \int_0^1 t^{b + N - 1} (1 - t)^{c - b - 1} \int_0^1 \frac{(1 - u)^{N - 1} }{(1 - ztu)^{a + N} }\d u \d t,
\end{split}
\end{gather}
provided that $\Re c >\Re b$, $N > -\Re b$, $\left| \arg (-z) \right| < \pi$. With the added conditions $N > -\Re a$, $\left| \arg (-z) \right| \leq \frac{\pi }{2}$, we find
\begin{align*}
\left|\int_0^1 \frac{(1 - u)^{N - 1} }{(1 - ztu)^{a + N} }\d u \right|&\leq
\int_0^1 \frac{(1 - u)^{N - 1} }{\left| 1 - ztu \right|^{\Re  a  + N} }\e^{(\Im a)\arg (1 - ztu)} \d u\\
& \le\int_0^1 (1 - u)^{N - 1} \d u \mathop {\sup }\limits_{r > 0} \e^{(\Im a)\arg (1 + r\e^{\im \arg (-z)} )} 
\le\frac{\max (1,\e^{(\Im a)\arg (-z)} )}{N} .
\end{align*}
Thus
\begin{align*}
& \left|\frac{N\Gamma (c + N)}{\Gamma (b + N)\Gamma (c - b)} \int_0^1 t^{b + N - 1} (1 - t)^{c - b - 1} \int_0^1 \frac{(1 - u)^{N - 1} }{(1 - ztu)^{a + N} }\d u \d t \right|
\\ & \qquad\qquad\le \left|\frac{ \Gamma (c + N) }{ \Gamma (b + N)\Gamma (c - b) }\right|\int_0^1 t^{\Re  b  + N - 1} (1 - t)^{\Re  c - \Re b  - 1} \d t 
\max (1,\e^{(\Im a)\arg (-z)} )
\\ & \qquad\qquad= \left| \frac{\Gamma (\Re b + N)\Gamma (c + N)\Gamma (\Re c - \Re b)}{\Gamma (b + N)\Gamma (\Re c + N) \Gamma (c - b) }\right|\max (1,\e^{(\Im a)\arg (-z)} ) .
\end{align*}
In arriving at the penultimate expression, use has been made of the standard integral representation of the beta function \cite[\href{https://dlmf.nist.gov/5.12.E1}{Eq.\ 5.12.1}]{NIST:DLMF}. This completes the proof of \eqref{est1}.

Now suppose that $z<0$, $a$, $b$ and $c$ are real, and $N > \max(-a,-b)$ and $c>b$. From \eqref{remainder} and the mean value theorem of integration, we can assert that
\begin{align*}
R_N^{(F)} (a,b,c,z) =\; & \frac{(a)_N (b)_N }{(c)_N N!}z^N\frac{N\Gamma (c + N)}{\Gamma (b + N)\Gamma (c - b)} \\ & \times
\int_0^1 t^{b + N - 1} (1 - t)^{c - b - 1} \theta _N (t,z,a)\int_0^1 (1 - u)^{N - 1} \d u \d t
\\ =\; & \frac{(a)_N (b)_N }{(c)_N N!}z^N\frac{\Gamma (c + N)}{\Gamma (b + N)\Gamma (c - b)} \int_0^1 t^{b + N - 1} (1 - t)^{c - b - 1} \theta _N (t,z,a)\d t 
\\ =\; & \frac{(a)_N (b)_N }{(c)_N N!}z^N\Theta _N (z,a,b,c),
\end{align*}
where $0<\theta _N (t,z,a)<1$ and $0<\Theta _N (z,a,b,c)<1$. Thus, under these assumptions, the remainder term $R_N^{(F)} (a,b,c,z)$ does not
exceed the corresponding first neglected term in absolute value and has the same sign.

We continue with the proof of the inequality \eqref{est2}. An integral representation for the beta function \cite[\href{https://dlmf.nist.gov/5.12.E2}{Eq.\ 5.12.2}]{NIST:DLMF} gives
\begin{equation}\label{coeff}
 \frac{(a)_n }{n!} = \frac{2\sin (\pi a)}{\pi }\int_0^{\pi /2} \tan ^{2a - 1} u \sin ^{2n} u \d u,
\end{equation}
provided $n>-\Re a$ and $\Re a<1$. Hence, by summation and analytic continuation, we have
\[
(1 -zt)^{ - a}  = \sum\limits_{n = 0}^{N - 1} \frac{(a)_n }{n!}t^n z^n  + z^N t^N \frac{2\sin (\pi a)}{\pi }\int_0^{\pi /2} \frac{\tan ^{2a - 1} u\sin ^{2N} u}{1 -z t\sin ^2 u}\d u
\]
provided that $z\in \mathbb{C}\setminus \left[1,+\infty\right)$, $N>-\Re a$ and $\Re a<1$. Substitution into the standard integral representation \cite[\href{https://dlmf.nist.gov/15.6.E1}{Eq.\ 15.6.1}]{NIST:DLMF} followed by term-by-term integration, yields \eqref{hypergeomexp} with
\[
R_N^{(F)} (z,a,b,c) = \frac{2\sin (\pi a)}{\pi }\frac{\Gamma (c)}{\Gamma (b)\Gamma (c - b)}z^N \int_0^1 t^{b + N - 1} (1 - t)^{c - b - 1} \int_0^{\pi /2} \frac{\tan ^{2a - 1} u\sin ^{2N} u}{1 - zt\sin ^2 u}\d u\d t,
\]
under the assumptions that $z\in \mathbb{C}\setminus \left[1,+\infty\right)$, $N > \max ( - \Re a, - \Re b)$, $\Re a<1$ and $\Re c > \Re b$. 
Note that for $0<s<1$, 
\[
\left| 1 - zs \right|^2  = \left| z \right|^2 s^2 - 2(\Re z)s + 1 \ge \begin{cases} 1  & \text{ if } \;
\Re z \le  0, \\     \frac{(\Im z)^2}{ \left| z \right|^2} & \text{ if } \;
0<\Re z \le  \left| z \right|^2, \\  \left|1-z\right|^2 & \text{ if } \;  \Re z > \left| z \right|^2.\end{cases}
\]
Consequently,
\begin{multline*}
\left| R_N^{(F)} (z,a,b,c) \right| \le \frac{ 2\left|  \sin (\pi a) \right|} {\pi }\left| \frac{ \Gamma (c)}{\Gamma (b)\Gamma (c - b)}z^N  \right| \int_0^1  t^{\Re b + N - 1} (1 - t)^{\Re c - \Re b - 1} \d t
\\ \times  \int_0^{\pi /2}  \tan ^{2\Re a - 1} u\sin ^{2N} u\d u    \times \begin{cases} 1  & \text{ if } \;
\Re z \le  0, \\ \left|\frac{z}{\Im z}\right| & \text{ if } \;
0<\Re z \le  \left| z \right|^2, \\  \frac{1}{|1-z|} & \text{ if } \;  \Re z > \left| z \right|^2.\end{cases}
\end{multline*}
The $t$-integral can be computed using the standard formula for the beta function \cite[\href{https://dlmf.nist.gov/5.12.E1}{Eq.\ 5.12.1}]{NIST:DLMF}, and the explicit form of the $u$-integral follows from \eqref{coeff}. This completes the proof of the bound \eqref{est2}.

We conclude this section with the proof of Theorem \ref{thm22}. We will focus on the closed half-plane $\Re z \leq \frac12$. Combining formula \eqref{remainder} with the identity
\[
R_N^{(F)} (a,b,c,z) = \frac{(a)_N (b)_N }{(c)_N N!}z^N  + R_{N + 1}^{(F)} (a,b,c,z)
\]
and the standard integral representation \cite[\href{https://dlmf.nist.gov/15.6.E1}{Eq.\ 15.6.1}]{NIST:DLMF}, we obtain the truncated expansion \eqref{hypergeomexp}, in which we present the remainder term as
\[
R_N^{(F)} (z,a,b,c)=\frac{\left(a\right)_N \left(b\right)_N}{\left(c\right)_N N!} z^N+
\frac{\Gamma(c) \left(a\right)_{N+1} z^{N+1}}{\Gamma(b)\Gamma(c-b)(N+1)!}\int_0^1\left(1-t\right)^{c-b-1}t^{b+N}
\hyperF{a+N+1}{1}{N+2}{zt}\d t,
\]
provided that $N >  - \Re b - 1$ and $\Re c > \Re b$. Using the identities
\begin{align*}
\hyperF{a+N+1}{1}{N+2}{zt}&=
\left(1-zt\right)^{-a}\hyperF{1-a}{N+1}{N+2}{zt}\\
&=\frac{N+1}{azt}\left(1-zt\right)^{-a}\left(\hyperF{-a}{N}{N+1}{zt}-\hyperF{-a}{N+1}{N+1}{zt}\right)\\
&=\frac{N+1}{azt}\left( \left(1-zt\right)^{-a}\hyperF{-a}{N}{N+1}{zt}-1\right),
\end{align*}
we decompose the remainder $R_N^{(F)} (z,a,b,c)$ as follows:
\begin{align*}
R_N^{(F)} (z,a,b,c)&=\frac{\left(a\right)_N \left(b\right)_N}{\left(c\right)_N N!} z^N-\frac{\left(a+1\right)_N \left(b\right)_N}{\left(c\right)_N N!} z^N+\widetilde{R}_N^{(F)} (z,a,b,c)\\
&=\frac{-N}{a}\frac{\left(a\right)_N \left(b\right)_N}{\left(c\right)_N N!} z^N+\widetilde{R}_N^{(F)} (z,a,b,c),
\end{align*}
with
\[
\widetilde{R}_N^{(F)} (z,a,b,c)=\frac{\Gamma(c) \left(a+1\right)_{N} }{\Gamma(b)\Gamma(c-b)N!}z^{N}\int_0^1\frac{\left(1-t\right)^{c-b-1}t^{b+N-1}}{\left(1-zt\right)^{a}}
\hyperF{-a}{N}{N+1}{zt}\d t.
\]
The integral on the right-hand side is convergent provided that $N >  - \Re b$ and $\Re c > \Re b$. Now, by employing the substitution $t=1-\e^{-\tau}$, we deduce
\[
\widetilde{R}_N^{(F)} (z,a,b,c)=\frac{\Gamma(c) \left(a+1\right)_N }{\Gamma(b)\Gamma(c-b)N!}z^{N}\int_0^{+\infty}
\frac{\e^{(b-c)\tau}\left(1-\e^{-\tau}\right)^{b+N-1}}{\left(1-z\left(1-\e^{-\tau}\right)\right)^a}
\hyperF{-a}{N}{N+1}{z\left(1-\e^{-\tau}\right)}\d \tau.
\]
With this integral representation one can show that $R_N^{(F)} (z,a,b,c)=\O\left( |c|^{-N}\right)$ as $|c|\to+\infty$ in the sector $\left |\arg c\right|<\frac{\pi}{2}$. To extend this result to the larger sector $\left |\arg c\right|<\pi$, we deform the contour of integration by rotating it through an angle $\sigma$, $|\sigma|\leq\frac{\pi}{2}$. Thus, by writing $\tau=t\e^{\im\sigma}$, we arrive at
\[
\widetilde{R}_N^{(F)} (z,a,b,c)=\frac{\Gamma(c) \left(a+1\right)_N }{\Gamma(b)\Gamma(c-b)N!}z^{N}\int_0^{+\infty}\e^{(b-c)t\e^{\im\sigma}}t^{b+N-1}f(t)\d t,
\]
with
\[
f(t)=\e^{\im\sigma}\left(\frac{1-\e^{-t\e^{\im\sigma}}}{t}\right)^{b+N-1}\left(1-z\left(1-\e^{-t\e^{\im\sigma}}\right)\right)^{-a}
\hyperF{-a}{N}{N+1}{z\left(1-\e^{-t\e^{\im\sigma}}\right)}.
\]
Let $\theta=\arg(c-b)\in(-\pi,\pi)$, $|\theta+\sigma|<\frac{\pi}{2}$. Since $|\sigma|\leq\frac{\pi}{2}$, we have
\[
\left|\frac{1-\e^{-t\e^{\im\sigma}}}{t}\right|=\left|\frac{\e^{\im\sigma}}{t}\int_0^t\e^{-\tau\e^{\im\sigma}}\d\tau\right|\leq 1,
\]
and hence
\begin{align*}
\left|\left(\frac{1-\e^{-t\e^{\im\sigma}}}{t}\right)^{b+N-1}\right| & \le \exp \left(  - (\Im b)\arg \left(1 - \e^{ - t\e^{\im\sigma } } \right) \right) \\ & \le \exp \left( \left| \Im b \right|\left| \arg \left(1 - \e^{ - t\e^{\im\sigma } } \right) \right| \right) 
 \le \exp \left( \left| \Im b \right|\left| \sigma  \right| \right).
\end{align*}
The last inequality may be proved as follows. Assume that $|\sigma|<\frac{\pi}{2}$ and $t>0$. Then
\begin{align*}
& \left| \arg \left( 1 - \e^{ - t\e^{\im\sigma } } \right) \right|  = \arctan \left| \frac{\sin (t\sin \left| \sigma  \right|)}{\e^{t\cos \left| \sigma  \right|}  - \cos (t\sin \left| \sigma  \right|)} \right| \le \arctan \left| \frac{\sin (t\sin \left| \sigma  \right|)}{\e^{t\cos \left| \sigma  \right|}  - 1} \right|
\\ & \le \arctan \left( \frac{t\sin \left| \sigma  \right|}{\e^{t\cos \left| \sigma  \right|}  - 1} \right) = \arctan \left( \frac{t\cos \left| \sigma  \right|}{\e^{t\cos \left| \sigma  \right|}  - 1}\tan \left| \sigma  \right| \right) \le \arctan (\tan \left| \sigma  \right|) = \left| \sigma  \right|.
\end{align*}
The case $|\sigma|=\frac{\pi}{2}$ follows by continuity. For the hypergeometric function, we use the integral representation
\begin{equation}\label{eq111}
\left(1-z\left(1-\e^{-t\e^{\im\sigma}}\right)\right)^{-a}
\hyperF{-a}{N}{N+1}{z\left(1-\e^{-t\e^{\im\sigma}}\right)}
=N\int_0^1 \tau^{N-1}\left(\frac{1-\tau z\left(1-\e^{-t\e^{\im\sigma}}\right)}{1- z\left(1-\e^{-t\e^{\im\sigma}}\right)}\right)^{a}\d \tau.
\end{equation}
Since $|\sigma|\leq\frac{\pi}2$, we have
\begin{align*}
\left| \left( \frac{1 - \tau z\left( 1 - \e^{ - t\e^{\im\sigma } } \right)}{1 - z\left( 1 - \e^{ - t\e^{\im\sigma } }  \right)} \right)^a  \right| 
& = \left| \frac{1 - \tau z\left( 1 - \e^{ - t\e^{\im\sigma } }  \right)}{1 - z\left( 1 - \e^{ - t\e^{\im\sigma } } \right)} \right|^{\Re a} 
\exp \left(  - (\Im a)\arg \left( \frac{1 - \tau z\left( 1 - \e^{ - t\e^{\im\sigma } } \right)}{1 - z\left( 1 - \e^{ - t\e^{\im\sigma } } \right)} \right) \right) \\ 
& \le \left| \frac{1 - \tau z\left( 1 - \e^{ - t\e^{\im\sigma } } \right)}{1 - z\left( 1 - \e^{ - t\e^{\im\sigma } } \right)} \right|^{\Re a} \e^{\left| \Im a \right|\pi } . 
\end{align*}
We continue bounding the last expression under the assumption that $\Re a>0$. For fixed complex $w$ and $0\leq \tau\leq 1$, the function $\tau\mapsto\left|1-\tau w\right|$ has only one critical point, a local minimum. Hence, $\max_{0\leq \tau\leq 1}\left|1-\tau w\right|=\max(1,\left|1-w\right|)$. Consequently,
\[
\left|\frac{1-\tau z\left(1-\e^{-t\e^{\im\sigma}}\right)}{1- z\left(1-\e^{-t\e^{\im\sigma}}\right)}\right|\leq
\max\left(1,\left| 1- z\left(1-\e^{-t\e^{\im\sigma}}\right)\right|^{-1}\right).
\]
Since $t\geq0$ and $|\sigma|\leq\frac{\pi}{2}$, we have for $w=1-\e^{-t\e^{\im\sigma}}$ that $|w-1|\leq1$.
Taking  $\Re z\leq\frac12$, it follows that $1-zw=0$ can only happen in the two extreme cases, $\Re z=\frac12$ and $|\sigma|=\frac{\pi}{2}$. We consider these two cases separately.

(1) In the case that $\Re z<\frac12$, it follows from the maximum modulus principle that the minimum of $|1-zw|$ occurs when $|\sigma|=\frac{\pi}{2}$.
We consider the case $\sigma=\frac{\pi}{2}$ and take
\[
G(x,y,t) =\left|1-(x+\im y)(1-\e^{-\im t})\right|^2=2(1-\cos t)\left(\left(\tfrac12-x\right)^2+\left(y+\frac{\sin t}{2(1-\cos t)}\right)^2\right).
\]
The minimum occurs at $y=\frac{-\sin t}{2(1-\cos t)}$, that is, $1-\cos t=\frac{2}{4y^2+1}$.
Hence,
\[
G(x,y,t)\geq \frac{\left(\frac12-x\right)^2}{y^2+\frac14}.
\]
Thus
\[
\left| 1- z\left(1-\e^{-t\e^{\im\sigma}}\right)\right|^{-1}\leq \frac{\sqrt{\left(\Im z\right)^2+\frac14}}{\frac12-\Re z},
\]
which holds for $|\sigma|\leq \frac{\pi}{2}$ and tends to infinity when $z$ approaches the boundary $\Re z=\frac12$.

(2) In the case that $|\sigma|<\frac{\pi}{2}$, it follows from maximum modulus principle that the minimum of $|1-zw|$ occurs when $\Re z=\frac12$.
We take $z=\frac12+\im y$ and
\[
H(y,t,\sigma) =\left|1-(\tfrac12+\im y)(1-\e^{-t\e^{\im\sigma}})\right|^2.
\]
We expand the right-hand side, obtain a quadratic expression in $y$ and, hence, the minimum 
\[
H(y,t,\sigma)\geq\frac{\left(1-\e^{-2t\cos\sigma}\right)^2}{4\left(1-2\cos(t\sin\sigma)\e^{-t\cos\sigma}+\e^{-2t\cos\sigma}\right)}.
\]
Note that $\left(1-\e^{-2t\cos\sigma}\right)^2=\left(1-\e^{-t\cos\sigma}\right)^2\left(1+\e^{-t\cos\sigma}\right)^2\geq \left(1-\e^{-t\cos\sigma}\right)^2$.
Thus
\[
4H(y,t,\sigma)\geq\frac{\e^{t\cos\sigma}-2+\e^{-t\cos\sigma}}{\e^{t\cos\sigma}-2\cos(t\sin\sigma)+\e^{-t\cos\sigma}}\geq\cos^2\sigma,
\]
where the second inequality can be proved as follows. The inequality is equivalent to
\[
\cos^2\sigma\cos(t\sin\sigma)-1+\sin^2\sigma\cosh(t\cos\sigma)\geq0.
\]
Using $1=\cos^2\sigma+\sin^2\sigma$ we can re-write this inequality as
\[
\sin^2\sigma\sinh^2\left(\tfrac12 t\cos\sigma\right)\geq \cos^2\sigma\sin^2\left(\tfrac12 t\sin\sigma\right),
\]
which holds since for all positive $A$ and $B$ we have $\frac{\sinh^2A}{A^2}\geq 1\geq \frac{\sin^2 B}{B^2}$. Thus $4H(y,t,\sigma)\geq\cos^2\sigma$, that is,
\[
\left| 1- z\left(1-\e^{-t\e^{\im\sigma}}\right)\right|^{-1}\leq\frac{2}{\cos\sigma},
\]
which holds for $\Re z\leq\frac12$ and tends to infinity when $\sigma$ approaches the boundary $|\sigma|= \frac{\pi}{2}$.

Therefore, in summary, the absolute value of the left-hand side of \eqref{eq111} is at most $K^{\Re a} \e^{|\Im a |\pi}$, in which we can take
\[
K=\max\left(1,\frac{\sqrt{\left(\Im z\right)^2+\frac14}}{\frac12-\Re z}\right)\qquad\textrm{or}\qquad
K=\frac{2}{\cos\sigma}.
\]
Consequently,
\begin{align*}
\left|\widetilde{R}_N^{(F)} (z,a,b,c)\right| & \le \left| \frac{\Gamma (c)(a + 1)_N }{\Gamma (b)\Gamma (c - b)N!}z^N  \right|\int_0^{ + \infty } \e^{ - \left| c - b \right|t\cos (\theta  + \sigma )} t^{\Re b + N - 1} \left| f(t) \right|\d t 
\\ & \le \left| \frac{\Gamma (c)(a + 1)_N }{\Gamma (b)\Gamma (c - b)N!}z^N \right|\int_0^{ + \infty } \e^{ - \left| c - b \right|t\cos (\theta  + \sigma )} t^{\Re b + N - 1} \e^{\pi |\Im a| + |\sigma ||\Im b|} K^{\Re a} \d t
\\ & = \e^{\pi |\Im a| + |\sigma ||\Im b|} \left| \frac{\Gamma (\Re b+N)\Gamma (c)(a + 1)_N }{\Gamma (b)\Gamma (c - b)N!((c - b)\cos (\theta  + \sigma ))^{\Re b+N} }z^N  \right|K^{\Re a} ,
\end{align*}
and therefore,
\begin{align*}
& \left| R_N^{(F)} (z,a,b,c) \right| \le \left| \frac{N}{a}\frac{(a)_N (b)_N }{(c)_N N!}z^N \right| + \left| \widetilde{R}_N^{(F)} (z,a,b,c) \right|
\\ & \le \left| \frac{N}{a}\frac{(a)_N (b)_N }{(c)_N N!}z^N \right| + \e^{\pi |\Im a| + |\sigma ||\Im b|} \left| \frac{\Gamma (\Re b + N)\Gamma (c)(a + 1)_N }{\Gamma (b)\Gamma (c - b)N!((c - b)\cos (\theta  + \sigma ))^{\Re b + N} }z^N \right|K^{\Re a} 
\\ & = \left( \left| \frac{N}{a} \right| + \left| \frac{\Gamma (\Re b + N)\Gamma (c + N)}{\Gamma (b + N)\Gamma (c - b)}\frac{\e^{\pi |\Im a| + |\sigma ||\Im b|} \left( 1 + \frac{N}{a} \right)K^{\Re a} }{((c - b)\cos (\theta  + \sigma ))^{\Re b + N} } \right| \right)\left| \frac{(a)_N (b)_N }{(c)_N N!}z^N  \right| .
\end{align*}

\section{Proof of Theorems \ref{thm1} and \ref{thm2}}\label{SecProofs12}

In this section, we prove the error bounds stated in Theorems \ref{thm1}--\ref{thm2}. To this end, we require an estimate for the remainder term of the well-known asymptotic expansion of the modified Bessel function of the second kind given in the following lemma.

\begin{lemma}\label{lemma1} Let $N$ be a non-negative integer and let $\mu$ be an arbitrary complex number such that 
$\left|\Re\mu \right|<N + \frac{1}{2}$. Then
\begin{equation}\label{eq2}
K_{ \pm \mu } (w) = \sqrt{\frac{\pi }{2w}} \e^{ - w} \left(  \sum\limits_{n = 0}^{N - 1} \frac{a_n (\mu )}{w^n}  + r_N^{(K)} (w,\mu ) \right),
\end{equation}
where the remainder term satisfies the estimate
\[
\left| r_N^{(K)} (w,\mu ) \right| \le \left|\frac{\cos (\pi \mu )}{\cos (\pi \Re\mu )}\right|\frac{\left|a_N (\Re\mu )\right|}{\left| w \right|^N } \times 
\begin{cases} 1 & \text{if } \; |\arg w| \le \frac{\pi }{2}, \\ \min\big(\left|\csc (\arg w) \right|,1 + \chi\big(N+\frac{1}{2}\big)\big) 
& \text{if } \;  \frac{\pi }{2} < |\arg w| \leq \pi. \end{cases}
\]
If $2\Re\mu $ is an odd integer, then the limiting value has to be taken in this bound. In addition, the remainder term $r_N^{(K)} (w ,\mu)$ does not exceed 
the corresponding first neglected term in absolute value and has the same sign provided that $w$ is positive, $\mu$ is real, and $\left| \mu \right| <N + \frac{1}{2}$.
\end{lemma}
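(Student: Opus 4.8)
The plan is to mirror the strategy of Section~\ref{SecHypRepresentation}: replace the explicit Taylor-type expansion of a single factor by its exact integral remainder, and carry the order-dependence through a beta-integral representation of the coefficients, so that the constants $\cos(\pi\mu)/\cos(\pi\Re\mu)$ and $a_N(\Re\mu)$ emerge without spurious factors involving $\Im\mu$. Since $K_{-\mu}=K_\mu$ and $a_n(\mu)=a_n(-\mu)$, I may assume $\Re\mu\ge 0$ throughout, so that the relevant representations converge. The first step is to record the analogue of \eqref{coeff}: using the Beta integral together with $\Gamma(\tfrac12+\mu)\Gamma(\tfrac12-\mu)=\pi/\cos(\pi\mu)$, one obtains, for $|\Re\mu|<n+\tfrac12$,
\[
a_n(\mu)=\frac{2\cos(\pi\mu)}{\pi}\left(\tfrac12\right)_n\left(-\tfrac12\right)^n\int_0^{\pi/2}\sin^{2n}(2\theta)\,(\tan\theta)^{2\mu}\,\d\theta ,
\]
in which $\mu$ enters only through the real, positive quantity $(\tan\theta)^{2\mu}$, whose modulus is $(\tan\theta)^{2\Re\mu}$. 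This already gives $\left|a_n(\mu)\right|\le\left|\cos(\pi\mu)/\cos(\pi\Re\mu)\right|\left|a_n(\Re\mu)\right|$.

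Next I would combine this with a standard integral representation of $K_\mu$ and the Borel-type identity $(\tfrac12)_n=\tfrac1{\sqrt\pi}\int_0^\infty\e^{-s}s^{n-1/2}\,\d s$ to establish the exact representation
\[
\sqrt{\tfrac{2w}{\pi}}\,\e^{w}K_\mu(w)=\frac{2\cos(\pi\mu)}{\pi^{3/2}}\int_0^{\pi/2}\!\!\int_0^\infty (\tan\theta)^{2\mu}\,\e^{-s}s^{-1/2}\Big(1+\tfrac{s\sin^2(2\theta)}{2w}\Big)^{-1}\d s\,\d\theta ,
\]
whose term-by-term expansion of the geometric factor reproduces $\sum_n a_n(\mu)w^{-n}$; its correctness can be checked against the coefficient formula above. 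Truncating the geometric series after $N$ terms leaves a remainder in which the singular factor $(1+\tfrac{s\sin^2(2\theta)}{2w})^{-1}$ appears to the first power only, exactly as the factor $(1-zt\sin^2 u)^{-1}$ did in the proof of \eqref{est2}. Writing $\rho=\tfrac12 s\sin^2(2\theta)\ge 0$, the modulus of this factor equals $|w|/|w+\rho|$; when $\left|\arg w\right|\le\tfrac{\pi}{2}$ we have $|w+\rho|\ge|w|$, so the factor is at most $1$, and matching the resulting integral against the beta representation of $a_N(\Re\mu)$ reproduces precisely the stated bound with constant $1$.

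The delicate part is the sector $\tfrac{\pi}{2}<\left|\arg w\right|\le\pi$, where $w+\rho$ can approach $0$ as $\rho$ ranges over $(0,\infty)$, and this is the source of the two competing estimates. Pulling out the uniform lower bound $\min_{\rho\ge0}|w+\rho|=|w|\left|\sin(\arg w)\right|$ yields the factor $\left|\csc(\arg w)\right|$, which is sharp away from $\arg w=\pm\pi$ but blows up at the endpoints. The alternative, endpoint-safe bound $1+\chi(N+\tfrac12)$ is obtained instead from a sharper estimate of the singular integral that remains finite as $\arg w\to\pm\pi$ (for instance by a contour rotation analogous to the proof of Theorem~\ref{thm22}); the gamma-ratio $\chi(N+\tfrac12)$ arises there from the integral of a power of a cosine, exactly as in the classical error bounds for the Hankel functions. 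Taking the smaller of the two gives the $\min$ in the statement, and I expect this uniform estimate near the anti-Stokes direction $\arg w=\pi$ to be the main obstacle.

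Finally, for positive $w$ and real $\mu$ the entire integrand is real and the factor $(1+\rho/w)^{-1}$ lies in $(0,1]$; the mean value theorem of integration then gives $r_N^{(K)}(w,\mu)=\Theta_N\,a_N(\mu)w^{-N}$ with $0<\Theta_N<1$, which shows that the remainder has the same sign as, and does not exceed in modulus, the first neglected term. The limiting case in which $2\Re\mu$ is an odd integer is recovered by continuity in $\mu$, and the full range $|\Re\mu|<N+\tfrac12$ follows from the evenness $K_{-\mu}=K_\mu$.
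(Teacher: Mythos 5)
Your route is genuinely different from the paper's: the paper disposes of Lemma \ref{lemma1} entirely by citation (Theorem 1.8 and Propositions B.1 and B.3 of \cite{Nemes17} for complex $w$, and Watson \cite{Watson1944} for the real case), whereas you attempt a self-contained proof modelled on Section \ref{SecHypRepresentation}. Much of your skeleton is sound. Your beta-integral formula for $a_n(\mu)$ is correct (it is the exact analogue of \eqref{coeff}), and it does give $\left|a_n(\mu)\right|\le\left|\cos(\pi\mu)/\cos(\pi\Re\mu)\right|\left|a_n(\Re\mu)\right|$. Moreover, your double-integral representation of $\sqrt{2w/\pi}\,\e^{w}K_\mu(w)$ is in fact a true identity: after the substitution $s=2t/\sin^{2}(2\theta)$ and the evaluation $\tfrac14\int_{-\infty}^{\infty}\e^{\mu x}\e^{-2t\cosh^{2}(x/2)}\,\d x=\tfrac12\e^{-t}K_\mu(t)$, it becomes the classical Stieltjes/resurgence representation that underlies the bounds of \cite{Nemes17}. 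Granting it, your treatment of the sector $|\arg w|\le\frac{\pi}{2}$, of the $\left|\csc(\arg w)\right|$ bound, and of the sign/mean-value statement for positive $w$ and real $\mu$ is correct.

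There are, however, three genuine gaps. First, you never prove the representation: ``checking against the coefficient formula'' only matches asymptotic expansions, and distinct functions can share an expansion. A proof requires, e.g., a Cauchy-integral argument on $f(w)=\sqrt{2w/\pi}\,\e^{w}K_\mu(w)$ using its jump across the negative axis, $f(t\e^{\pi\im})-f(t\e^{-\pi\im})=2\im\cos(\pi\mu)\sqrt{2t/\pi}\,\e^{-t}K_\mu(t)$ (from $K_\mu(t\e^{\pm\pi\im})=\e^{\mp\pi\im\mu}K_\mu(t)\mp\pi\im I_\mu(t)$), together with the normalisation $\frac{\cos(\pi\mu)}{\pi}\int_0^{\infty}\sqrt{2/(\pi t)}\,\e^{-t}K_\mu(t)\,\d t=1$. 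Second, that representation converges only for $|\Re\mu|<\frac12$ (your $\theta$-integral diverges otherwise), and your claim that the full range $|\Re\mu|<N+\frac12$ ``follows from the evenness $K_{-\mu}=K_\mu$'' is false: evenness only reduces to $\Re\mu\ge0$ and cannot enlarge the strip. The correct mechanism --- exactly the one the paper uses after \eqref{coeff} (``by summation and analytic continuation'') --- is that the \emph{truncated} identity has a remainder integral which converges and is analytic in $\mu$ throughout $|\Re\mu|<N+\frac12$, so the identity extends by analytic continuation in $\mu$; this step is missing. Third, and most seriously, the endpoint-uniform bound $1+\chi\left(N+\frac12\right)$ --- the only estimate in the lemma that survives at $|\arg w|=\pi$, where your integral representation ceases to exist (the pole of $(1+\rho/w)^{-1}$ lands on the integration path) and $\left|\csc(\arg w)\right|$ is useless --- is not proved at all; you gesture at a contour rotation and yourself flag it as the main obstacle. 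That bound is precisely the content of Proposition B.3 of \cite{Nemes17}; without it your argument establishes the lemma only for $|\arg w|<\pi$, with a bound that degenerates as $\arg w\to\pm\pi$.
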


\begin{proof} The statement for complex variables follows from Theorem 1.8 and Propositions B.1 and B.3 of the paper \cite{Nemes17}. For the real case, see, e.g., \cite[pp.\ 206--207]{Watson1944}.
\end{proof}

We continue with the proof of Theorem \ref{thm1}. The associated Legendre function of the first kind can be represented in
terms of the modified Bessel function of the first kind as
\begin{equation}\label{eq4}
P_\nu ^\mu  (\cosh \xi ) = \frac{1}{\Gamma (\nu  - \mu  + 1)}\int_0^{ + \infty } t^\nu  \e^{ - t\cosh \xi } I_{ - \mu } (t\sinh \xi ) \d t,
\end{equation}
provided $\xi>0$ and $\Re\nu  > \Re\mu  - 1$ \cite[p.\ 214, Ent.\ 6.52]{Oberhettinger1974}. 
The growth rate of the modified Bessel function of the first kind at infinity (see, e.g., \cite[\href{http://dlmf.nist.gov/10.30.E4}{Eq.\ 10.30.4}]{NIST:DLMF}) 
and analytic continuation show that this representation is actually valid in the larger domain $\mathcal{D}_{\frac{1}{2}}$.

Now, suppose that $|\arg w|<2\pi$. Under this assumption, we have the following relations between the modified Bessel functions:
\[
I_{ - \mu } (w) =  \mp \frac{\im}{\pi }K_{ - \mu } (w\e^{ \mp \pi \im} ) \pm \frac{\im}{\pi }\e^{ \mp \pi \im\mu } K_{ - \mu } (w),
\]
when $0<\pm \arg w <2\pi$, and
\[
I_{ - \mu } (w) = \frac{\im}{2\pi }\left(K_{ - \mu } \left(w\e^{\pi \im} \right) - K_{ - \mu } \left(w\e^{ - \pi \im} \right)\right) + \frac{\sin (\pi \mu )}{\pi }K_{ - \mu } (w),
\]
when $\arg w = 0$ (cf. \cite[\href{https://dlmf.nist.gov/10.34.E3}{Eq.\ 10.34.3}]{NIST:DLMF}). The combination of these functional equations with the expression \eqref{eq2} yields
\begin{equation}\label{eq5}
I_{ - \mu } (w) = \frac{\e^w }{\sqrt{2\pi w}}\left( \sum_{n = 0}^{N - 1} ( - 1)^n \frac{a_n (\mu )}{w^n }  + \widetilde r_N^{(K)} (w,\mu ) \right) 
+ \widetilde C(\arg w,\mu )\frac{\e^{ - w} }{\sqrt{2\pi w}}\left( \sum_{m = 0}^{M - 1} \frac{a_m (\mu )}{w^m }  + r_M^{(K)} (w,\mu ) \right),
\end{equation}
with
\begin{equation}\label{eq7}
\widetilde C(\arg w,\mu )  = \begin{cases} \sin (\pi \mu ) & \text{if }\; \arg w = 0, \\ \pm \im \e^{ \mp \pi \im\mu } & \text{if } \; 0<\pm \arg w < 2\pi, \end{cases}
\end{equation}
and
\begin{equation}\label{eq11}
\widetilde r_N^{(K)} (w,\mu )  = \begin{cases} \frac{1}{2}(r_N^{(K)} (w\e^{\pi \im} ,\mu ) + r_N^{(K)} (w\e^{ - \pi \im} ,\mu )) & \text{if }\; \arg w = 0, \\ r_N^{(K)} (w\e^{ \mp \pi \im} ,\mu ) & \text{if } \; 0<\pm \arg w < 2\pi. \end{cases}
\end{equation}
Here $N$ and $M$ are arbitrary non-negative integers. Assuming that $\Re\nu  > \max \left( N - \frac{1}{2},M - \frac{1}{2} \right)$, the substitution of \eqref{eq5}
(with $w=t\sinh \xi$) into \eqref{eq4} gives \eqref{eq3} with $C(\xi,\mu ) = \widetilde C(\arg (\sinh \xi),\mu )$ and
\[
R_N^{(P1)} (\xi ,\mu ,\nu ) = \e^{ -  \left( \nu  + \frac{1}{2} \right)\xi}\int_0^{ + \infty } t^{\nu  - \frac{1}{2}} \e^{ - t\e^{ - \xi } } \widetilde r_N^{(K)} (t\sinh \xi ,\mu )\d t ,
\]
\[
R_M^{(P2)} (\xi ,\mu ,\nu ) =\e^{ \left( \nu  + \frac{1}{2} \right)\xi}\int_0^{ + \infty } t^{\nu  - \frac{1}{2}} \e^{ - t\e^\xi  } r_M^{(K)} (t\sinh \xi ,\mu )\d t .
\]
We perform a change of integration variable from $t$ to $s$ by $t=s \e^\xi$ in the case of the first integral and by $t=s \e^{-\xi}$ in the case of the second integral. 
By an application of Cauchy's theorem, the contours of integration can then be deformed back into the positive real axis and we arrive at the representations
\begin{equation}\label{eq12}
R_N^{(P1)} (\xi ,\mu ,\nu ) =\int_0^{ + \infty } s^{\nu  - \frac{1}{2}} \e^{ - s} \widetilde r_N^{(K)} \left(s \e^\xi  \sinh \xi ,\mu\right)\d s 
\end{equation}
and
\begin{equation}\label{eq13}
R_M^{(P2)} (\xi ,\mu ,\nu ) =\int_0^{ + \infty } s^{\nu  - \frac{1}{2}} \e^{ - s} r_M^{(K)} \left(s\e^{ - \xi } \sinh \xi ,\mu\right)\d s,
\end{equation}
respectively. Now observe that the function $\e^\xi \sinh\xi$ can be regarded as an analytic function from the domain $\mathcal{D}_1$ to the sector 
$\left\{z : |\arg z|<2\pi~\textrm{and}~ \left|z+\frac12\right|>\frac12\right\}$. It maps the part of $\mathcal{D}_1$ lying in the upper half-plane into 
the sector $\left\{z : 0<\arg z<2\pi~\textrm{and}~ \left|z+\frac12\right|>\frac12\right\}$, and it maps the part of $\mathcal{D}_1$ lying in the lower half-plane 
into the sector $\left\{z : -2\pi<\arg z<0~\textrm{and}~ \left|z+\frac12\right|>\frac12\right\}$. Similarly, the function $\e^{-\xi} \sinh\xi$ can be viewed as an 
analytic function from the domain $\mathcal{D}_1$ to the disk $\left\{z :  \left|z-\frac12\right|<\frac12\right\}$. Thus, we can employ analytic continuation 
in $\xi$ to extend our results into the domain $\mathcal{D}_1$. The formula \eqref{eq6} for $C(\xi,\mu )$ follows immediately from \eqref{eq7} and the 
properties of the hyperbolic sine function.

By Lemma \ref{lemma1} and the properties of the function $\e^{-\xi} \sinh\xi$, we can assert that
\begin{align*}
\left| R_M^{(P2)} (\xi ,\mu ,\nu ) \right| & \le\int_0^{ + \infty } s^{\Re\nu  - \frac{1}{2}} \e^{ - s} \left| r_M^{(K)} (s\e^{ - \xi } \sinh \xi ,\mu ) \right|\d s 
\\ & \le \left|\frac{\cos (\pi \mu )}{\cos (\pi \Re\mu )}\right|\left|a_M (\Re\mu )\right|\Gamma \left( \Re\nu  - M + \tfrac12 \right)  \left| \frac{\e^\xi}{\sinh \xi } \right|^M ,
\end{align*}
provided that $\xi \in \mathcal{D}_1$, $\left| \Re\mu  \right| < M + \frac{1}{2}$ and $\Re\nu  > M - \frac{1}{2}$. If $2\Re\mu $ is an odd integer, then the limiting 
value has to be taken in this bound. Similarly, by Lemma \ref{lemma1} and the properties of the function $\e^{\xi} \sinh\xi$, we can infer that
\begin{align*}
\left| R_N^{(P1)} (\xi ,\mu ,\nu ) \right| \le \; & \left|\frac{\cos (\pi \mu )}{\cos (\pi \Re\mu )}\right|\left| a_N (\Re\mu ) \right| 
\Gamma \left( \Re\nu  - N + \tfrac12 \right)  \left| \frac{\e^{ - \xi } }{\sinh \xi } \right|^N \\ 
& \times \begin{cases} 1 & \text{if } \; \frac{\pi }{2} \le \left| \vartheta \bmod 2\pi \right| \le \pi, \\ \min\big(\left|\csc \vartheta \right|,1 + \chi\big(N+\frac{1}{2}\big)\big) 
& \text{if } \; \left| \vartheta \bmod 2\pi \right| < \frac{\pi }{2}, \end{cases}
\end{align*}
with $\vartheta = \arg(\e^\xi  \sinh \xi) \in (-2\pi,2\pi)$, provided that $\xi \in \mathcal{D}_1$, $\left| \Re\mu  \right| < N + \frac{1}{2}$ and $\Re\nu  > N - \frac{1}{2}$. Again, if $2\Re\mu $ is an odd integer, then the limiting value has to be taken in this bound. Since
\[
\left|\tan\vartheta\right|=\left|\frac{\Im (\e^\xi  \sinh \xi )}{\Re (\e^\xi  \sinh \xi )}\right|
=\left| \frac{\e^{2\Re \xi } \sin (2\Im\xi )}{\e^{2\Re \xi } \cos (2\Im\xi ) - 1} \right|,
\]
whenever $\e^{2\Re \xi } \cos (2\Im\xi ) \ne 1$ (i.e., $\vartheta \not\equiv \frac{\pi}{2} \bmod \pi$), and
\[
\left| \csc \vartheta \right| = \frac{\sqrt {1 + \tan ^2 \vartheta } }{\left| \tan \vartheta \right|},
\]
it follows that $\left|\csc \vartheta \right| = \left| 1 - \e^{ - 2\xi } \right|\left| \csc (2\Im\xi ) \right|$. Finally, $\frac{\pi }{2} \le \left| \vartheta \bmod 2\pi \right| \le \pi$ occurs whenever $\Re (\e^\xi  \sinh \xi ) \le 0$, i.e., $\Re (\e^{2\xi } ) \le 1$, and $\left| \vartheta \bmod 2\pi \right| < \frac{\pi }{2}$ occurs whenever $\Re (\e^\xi  \sinh \xi ) > 0$, i.e., $\Re (\e^{2\xi } ) > 1$.

Now assume that $\xi>0$, $\mu$ and $\nu$ are real, and $\left| \mu \right| <M + \frac{1}{2}$ and $\nu > M - \frac{1}{2}$. 
From Lemma \ref{lemma1} and the mean value theorem of integration, we can infer that
\begin{align*}
R_M^{(P2)} (\xi ,\mu ,\nu ) & = \int_0^{ + \infty } s^{\nu  - \frac{1}{2}} \e^{ - s} \frac{a_M (\nu )}{(s \e^{ - \xi } \sinh \xi )^M }\theta _M (\xi ,\mu )\d s \\
& = a_M (\nu )\Gamma \left( \nu  - M + \tfrac12\right)\left( \frac{\e^\xi}{\sinh \xi } \right)^M \Theta _M (\xi ,\mu ,\nu ),
\end{align*}
where $0<\theta _M (\xi ,\mu )<1$ and $0<\Theta _M (\xi ,\mu ,\nu )<1$. This completes the proof of Theorem \ref{thm1}.

We conclude this section with the proof of Theorem \ref{thm2}.  Let $N$ be an arbitrary non-negative integer. 
Assuming that $\Re\nu  > N - \frac{1}{2}$, the substitution of \eqref{eq5} (with $w=t\sinh \xi$) into \eqref{QIntRepr1} gives \eqref{eq10} with
\[
R_N^{(Q)} (\xi ,\mu ,\nu ) =\e^{ \left( \nu  + \frac{1}{2} \right)\xi}\int_0^{ + \infty } t^{\nu  - \frac{1}{2}} \e^{ - t\e^\xi  } r_N^{(K)} (t\sinh \xi ,\mu )\d t.
\]
Hence, $R_N^{(Q)} (\xi ,\mu ,\nu ) = R_N^{(P2)} (\xi ,\mu ,\nu )$ and the estimates for $R_N^{(Q)} (\xi ,\mu ,\nu )$ follow from those for $R_N^{(P2)} (\xi ,\mu ,\nu )$.

\section{Proof of Theorem \ref{thm3}}\label{SecProof3}

In this section, we prove the error bounds stated in Theorem \ref{thm3}. The Ferrers function of the first kind is related to the associated Legendre function of the first kind via the limit
\begin{equation}\label{eq14}
\mathsf{P}_\nu ^\mu  (x) = \mathop {\lim }\limits_{\varepsilon  \to 0 + } \e^{\frac{\pi}{2} \im \mu} P_\nu ^\mu  (x + \im\varepsilon ),
\end{equation}
where $-1<x<1$ \cite[\href{http://dlmf.nist.gov/14.23.E1}{Eq.\ 14.23.1}]{NIST:DLMF}. Suppose that $0<\zeta<\pi$. Since
\begin{equation}\label{eq19}
\cosh (\varepsilon  + \im\zeta ) = \cosh \varepsilon \cos \zeta  + \im\sinh \varepsilon \sin \zeta,
\end{equation}
we obtain, using \eqref{eq14}, that
\begin{equation}\label{Plim}
\mathsf{P}_\nu ^\mu  (\cos \zeta ) = \mathop {\lim }\limits_{\varepsilon  \to 0 + } \e^{\frac{\pi}{2} \im\mu} P_\nu ^\mu  (\cosh (\varepsilon  + \im\zeta )).
\end{equation}
Assuming that  $\Re\nu  > \Re\mu  - 1$ and $\Re\nu  > N - \frac{1}{2}$, and taking into account \eqref{eq11}, \eqref{eq12} and \eqref{eq13}, 
the substitution of \eqref{eq3} (with $M=N$) into the right-hand side of \eqref{Plim} yields \eqref{eq15} with
\begin{gather}\label{eq18}
\begin{split}
R_N^{(\mathsf{P})} (\zeta ,\mu ,\nu ) =\; & \tfrac12 \e^{\left( \left( \nu  + \frac{1}{2} \right)\zeta  + \left( \mu  - \frac{1}{2} \right)\frac{\pi }{2} \right)\im} \mathop {\lim }\limits_{\varepsilon  \to 0 + } R_N^{(P1)} \left(\varepsilon+ \e^{\frac{\pi}{2}\im}\zeta,\mu ,\nu \right) \\ 
& + \tfrac12 \e^{ - \left( \left( \nu  + \frac{1}{2} \right)\zeta  + \left( \mu  - \frac{1}{2} \right)\frac{\pi }{2} \right)\im} \mathop {\lim }\limits_{\varepsilon  \to 0 + } R_N^{(P2)} \left(\varepsilon+\e^{\frac{\pi}{2}\im}\zeta,\mu ,\nu \right)
\\ = \; &\tfrac12 \e^{\left( \left( \nu  + \frac{1}{2} \right)\zeta  + \left( \mu  - \frac{1}{2} \right)\frac{\pi }{2} \right)\im} \int_0^{ + \infty } s^{\nu  - \frac{1}{2}} \e^{ - s} r_N^{(K)} (s\e^{\left( \zeta  - \frac{\pi }{2} \right)\im} \sin \zeta ,\mu )\d s
\\ & + \tfrac12 \e^{ - \left( \left( \nu  + \frac{1}{2} \right)\zeta  + \left( \mu  - \frac{1}{2} \right)\frac{\pi }{2} \right)\im} \int_0^{ + \infty } s^{\nu  - \frac{1}{2}} \e^{ - s} r_N^{(K)} (s\e^{ - \left( \zeta  - \frac{\pi }{2} \right)\im} \sin \zeta ,\mu )\d s.
\end{split}
\end{gather}
It follows from Lemma \ref{lemma1} that
\begin{gather}\label{eq22}
\begin{split}
\left| R_N^{(\mathsf{P})} (\zeta ,\mu ,\nu ) \right| \le \; & \tfrac12 \e^{-\Im\left( \alpha_{\mu,\nu,N}\right)} \int_0^{ + \infty } s^{\Re\nu   - \frac{1}{2}} \e^{ - s}
\left| r_N^{(K)} (s\e^{\left( \zeta  - \frac{\pi }{2} \right)\im} \sin \zeta ,\mu )\right|\d s\\
& + \tfrac12 \e^{\Im\left( \alpha_{\mu,\nu,N} \right)} \int_0^{ + \infty } s^{\Re\nu    - \frac{1}{2}} \e^{ - s} 
\left|r_N^{(K)} (s\e^{ - \left( \zeta  - \frac{\pi }{2} \right)\im} \sin \zeta ,\mu )\right|\d s \\ 
\le \; & \left|\frac{\cos (\pi \mu )}{\cos (\pi \Re\mu ) }\right|\left| a_N (\Re\mu )\right| \Gamma \left( \Re\nu  - N + \tfrac12 \right)
\frac{\cosh ( \Im(\alpha_{\mu,\nu,N}))}{\sin ^N \zeta } ,
\end{split}
\end{gather}
provided that $0<\zeta<\pi$, $\left| \Re\mu  \right| < N + \frac{1}{2}$ and $\Re\nu  > N - \frac{1}{2}$. If $2\Re\mu $ is an odd integer, then the limiting value 
has to be taken in this bound. This proves \eqref{eq16} for $R_N^{(\mathsf{P})} (\zeta ,\mu ,\nu )$. To prove \eqref{eq17}, we first re-write \eqref{eq18} in the form
\begin{align*}
R_N^{(\mathsf{P})} (\zeta ,\mu ,\nu ) = \; & \tfrac12 \cos (\alpha _{\mu ,\nu ,N} )\int_0^{ + \infty } s^{\nu  - \frac{1}{2}} \e^{ - s} \left( 
\e^{\im\zeta N} ( - \im)^N r_N^{(K)} (s\e^{\left( \zeta  - \frac{\pi }{2} \right)\im} \sin \zeta ,\mu ) \right.\\ 
& \hspace{124.5pt} \left.+ \e^{ - \im\zeta N} \im^N r_N^{(K)} (s\e^{ - \left( \zeta  - \frac{\pi }{2} \right)\im} \sin \zeta ,\mu ) \right)\d s
\\ & + \tfrac\im2 \sin (\alpha _{\mu ,\nu ,N} )\int_0^{ + \infty } s^{\nu  - \frac{1}{2}} \e^{ - s} \left( \e^{\im\zeta N} ( - \im)^N r_N^{(K)} (s\e^{\left( 
\zeta  - \frac{\pi }{2} \right)\im} \sin \zeta ,\mu ) \right.\\ 
& \hspace{124.5pt}\left.- \e^{ - \im\zeta N} \im^N r_N^{(K)} (s\e^{ - \left( \zeta  - \frac{\pi }{2} \right)\im} \sin \zeta ,\mu ) \right)\d s.
\end{align*}
Suppose that $\Re\nu  > N + \frac{1}{2}$. Next, we employ the relation $r_N^{(K)} (w,\mu ) = a_N (\mu )w^{ - N}  + r_{N + 1}^{(K)} (w,\mu )$ inside 
the second integral, to obtain
\begin{align*}
R_N^{(\mathsf{P})} (\zeta ,\mu ,\nu ) = \; & \tfrac12 \cos (\alpha _{\mu ,\nu ,N} )\int_0^{ + \infty } s^{\nu  - \frac{1}{2}} \e^{ - s} \left( 
\e^{\im\zeta N} ( - \im)^N r_N^{(K)} (s\e^{\left( \zeta  - \frac{\pi }{2} \right)\im} \sin \zeta ,\mu ) \right.\\ 
& \hspace{124.5pt} \left.+ \e^{ - \im\zeta N} \im^N r_N^{(K)} (s\e^{ - \left( \zeta  - \frac{\pi }{2} \right)\im} \sin \zeta ,\mu ) \right)\d s
\\ & + \tfrac\im2 \sin (\alpha _{\mu ,\nu ,N} )\int_0^{ + \infty } s^{\nu  - \frac{1}{2}} \e^{ - s} \left( \e^{\im\zeta N} ( - \im)^N r_{N+1}^{(K)} (s\e^{\left( 
\zeta  - \frac{\pi }{2} \right)\im} \sin \zeta ,\mu ) \right.\\ 
& \hspace{124.5pt}\left.- \e^{ - \im\zeta N} \im^N r_{N+1}^{(K)} (s\e^{ - \left( \zeta  - \frac{\pi }{2} \right)\im} \sin \zeta ,\mu ) \right)\d s.
\end{align*}
It follows from Lemma \ref{lemma1} that
\begin{gather}\label{eq25}
\begin{split}
\left| R_N^{(\mathsf{P})} (\zeta ,\mu ,\nu ) \right| \le \; &\left|\frac{\cos (\pi \mu )}{\cos (\pi \Re\mu ) }\right|\left| a_N (\Re\mu )\right| 
\Gamma \left( \Re\nu  - N + \tfrac12 \right) \frac{|\cos (\alpha _{\mu ,\nu ,N} )|}{\sin ^N \zeta } \\ & +
\left|\frac{\cos (\pi \mu )}{\cos (\pi \Re\mu ) }\right|\left| a_{N+1} (\Re\mu )\right| \Gamma \left( \Re\nu  - N - \tfrac12\right)
 \frac{|\sin (\alpha _{\mu ,\nu ,N} )|}{\sin ^{N+1} \zeta } ,
\end{split}
\end{gather}
provided that $0<\zeta<\pi$, $\left| \Re\mu  \right| < N + \frac{1}{2}$ and $\Re\nu  > N + \frac{1}{2}$. Again, if $2\Re\mu $ is an odd integer, 
then the limiting value has to be taken in this bound.

The Ferrers function of the second kind is related to the associated Legendre function of the second kind via the limit
\begin{equation}\label{eq59}
\mathsf{Q}_\nu ^\mu  (x) = \mathop {\lim }\limits_{\varepsilon  \to 0 + } \tfrac12\e^{ - \pi \im\mu }\left( \e^{ -\frac{\pi}{2} \im\mu} 
Q_\nu ^\mu  (x + \im\varepsilon ) + \e^{\frac{\pi}{2} \im\mu } Q_\nu ^\mu  (x - \im\varepsilon ) \right),
\end{equation}
where $-1<x<1$ (combine \cite[\href{http://dlmf.nist.gov/14.23.E5}{Eq.\ 14.23.5}]{NIST:DLMF} with 
\cite[\href{http://dlmf.nist.gov/14.3.E10}{Eq.\ 14.3.10}]{NIST:DLMF}). Suppose that $0<\zeta<\pi$. 
From \eqref{eq19} and \eqref{eq59}, we can assert that
\begin{equation}\label{eq20}
\mathsf{Q}_\nu ^\mu  (\cos \zeta ) = \mathop {\lim }\limits_{\varepsilon  \to 0 + } \tfrac12\e^{ - \pi \im\mu } \left( \e^{ - \frac{\pi}{2} \im\mu } 
Q_\nu ^\mu  (\cosh (\varepsilon  + \im\zeta )) + \e^{\frac{\pi}{2}\im\mu} Q_\nu ^\mu  (\cosh ( \varepsilon  - \im\zeta )) \right) .
\end{equation}
Assuming that  $\Re\nu  > \Re\mu  - 1$ and $\Re\nu  > N - \frac{1}{2}$, and taking into account \eqref{eq13}, the substitution of \eqref{eq10} into the right-hand side of \eqref{eq20} yields \eqref{eq21} with
\begin{gather}\label{eq24}
\begin{split}
R_N^{(\mathsf{Q})} (\zeta ,\mu ,\nu ) = & -\tfrac12\e^{- \left( \left( \nu  + \frac{1}{2} \right)\zeta  + \left( \mu  + \frac{1}{2} \right)\frac{\pi }{2} \right)\im}\mathop {\lim }\limits_{\varepsilon  \to 0 + } 
R_N^{(Q)} (\varepsilon  + \e^{\frac{\pi }{2}\im} \zeta ,\mu ,\nu )
\\ & - \tfrac12\e^{\left( \left( \nu  + \frac{1}{2} \right)\zeta  + \left( \mu  + \frac{1}{2} \right)\frac{\pi }{2} \right)\im}\mathop {\lim }\limits_{\varepsilon  \to 0 + } R_N^{(Q)} (\varepsilon  + \e^{ - \frac{\pi }{2}\im} \zeta ,\mu ,\nu )
\\ =  & -\tfrac12 \e^{ -\left( \left( \nu  + \frac{1}{2} \right)\zeta  + \left( \mu  + \frac{1}{2} \right)\frac{\pi }{2} \right)\im} \int_0^{ + \infty } s^{\nu  - \frac{1}{2}} \e^{ - s} r_N^{(K)} (s\e^{ - \left( \zeta  - \frac{\pi }{2} \right)\im} \sin \zeta ,\mu )\d s
\\ & - \tfrac12\e^{\left( \left( \nu  + \frac{1}{2} \right)\zeta  + \left( \mu  + \frac{1}{2} \right)\frac{\pi }{2} \right)\im} \int_0^{ + \infty } s^{\nu  - \frac{1}{2}} \e^{ - s} r_N^{(K)} (s\e^{\left( \zeta  - \frac{\pi }{2} \right)\im} \sin \zeta ,\mu )\d s.
\end{split}
\end{gather}
Now, a procedure analogous to \eqref{eq22} yields the estimate \eqref{eq16} for $R_N^{(\mathsf{Q})} (\zeta ,\mu ,\nu )$. 
To prove \eqref{eq23}, we first re-write \eqref{eq24} in the form
\begin{align*}
R_N^{(\mathsf{Q})} (\zeta ,\mu ,\nu ) =\; & \tfrac12\sin (\alpha _{\mu ,\nu ,N} )\int_0^{ + \infty } s^{\nu  - \frac{1}{2}} \e^{ - s} \left( 
\e^{ - \im N\zeta } \im^N r_N^{(K)} (s\e^{ - \left( \zeta  - \frac{\pi }{2} \right)\im } \sin \zeta ,\mu )  \right.\\ 
& \hspace{124.5pt} \left.+ \e^{\im N\zeta } ( - \im)^N r_N^{(K)} (s\e^{\left( \zeta  - \frac{\pi }{2} \right)\im} \sin \zeta ,\mu ) \right)\d s 
\\ & + \tfrac\im2 \cos (\alpha _{\mu ,\nu ,N} )\int_0^{ + \infty } s^{\nu  - \frac{1}{2}} \e^{ - s} \left( \e^{ - \im N\zeta } \im^N 
r_N^{(K)} (s\e^{ - \left( \zeta  - \frac{\pi }{2} \right)\im} \sin \zeta ,\mu )  \right.\\ 
& \hspace{124.5pt} \left.- \e^{\im N\zeta } ( - \im)^N r_N^{(K)} (s\e^{\left( \zeta  - \frac{\pi }{2} \right)\im} \sin \zeta ,\mu ) \right)\d s.
\end{align*}
Suppose that $\Re\nu  > N + \frac{1}{2}$. Next, we employ the relation $r_N^{(K)} (w,\mu ) = a_N (\mu )w^{ - N}  + r_{N + 1}^{(K)} (w,\mu )$ inside 
the second integral, to obtain
\begin{align*}
R_N^{(\mathsf{Q})} (\zeta ,\mu ,\nu ) =\; & \tfrac12\sin (\alpha _{\mu ,\nu ,N} )\int_0^{ + \infty } s^{\nu  - \frac{1}{2}} \e^{ - s} \left( \e^{ - \im N\zeta } \im^N 
r_N^{(K)} (s\e^{ - \left( \zeta  - \frac{\pi }{2} \right)\im } \sin \zeta ,\mu )  \right.\\ 
& \hspace{124.5pt} \left.+ \e^{\im N\zeta } ( - \im)^N r_N^{(K)} (s\e^{\left( \zeta  - \frac{\pi }{2} \right)\im} \sin \zeta ,\mu ) \right)\d s 
\\ & + \tfrac\im2 \cos (\alpha _{\mu ,\nu ,N} )\int_0^{ + \infty } s^{\nu  - \frac{1}{2}} \e^{ - s} \left( \e^{ - \im N\zeta } \im^N 
r_{N+1}^{(K)} (s\e^{ - \left( \zeta  - \frac{\pi }{2} \right)\im} \sin \zeta ,\mu )  \right.\\ 
& \hspace{124.5pt} \left.- \e^{\im N\zeta } ( - \im)^N r_{N+1}^{(K)} (s\e^{\left( \zeta  - \frac{\pi }{2} \right)\im} \sin \zeta ,\mu ) \right)\d s .
\end{align*}
Now, a procedure analogous to \eqref{eq25} gives the bound \eqref{eq23} for $R_N^{(\mathsf{Q})} (\zeta ,\mu ,\nu )$.

\section{Proof of Theorems \ref{thm231} and \ref{thm23}}

In this section, we prove the error bounds stated in Theorems \ref{thm231} and \ref{thm23}. The associated Legendre function of the first kind can be expressed in terms of the hypergeometric function as follows:
\begin{gather}\label{PinF}
\begin{split}
\frac{\sqrt{2\pi\sinh\xi}}{\Gamma(\mu+\nu+1)}P_\nu ^\mu  (\cosh\xi)=\; &
\e^{\pm\left(\frac12-\mu\right)\pi\im-\left(\nu+\frac12\right)\xi}\hyperOlverF{\frac12+\mu}{\frac12-\mu}{\nu+\frac32}{\frac{-\e^{-\xi}}{2\sinh\xi}}\\
&+\e^{\left(\nu+\frac12\right)\xi}\hyperOlverF{\frac12+\mu}{\frac12-\mu}{\nu+\frac32}{\frac{\e^{\xi}}{2\sinh\xi}},
\end{split}
\end{gather}
for $\xi \in \mathcal{D}_1 \setminus (0,+\infty)$. The upper or lower sign is taken in \eqref{PinF} according as $\xi$ is in the upper or lower half-plane (combine \cite[\href{http://dlmf.nist.gov/14.9.E12}{Eq.\ 14.9.12}]{NIST:DLMF} with \eqref{QhypergeoRepr2} and connection formula
\cite[\href{http://dlmf.nist.gov/15.10.E21}{Eq.\ 15.10.21}]{NIST:DLMF}).

Assume that $0<\zeta<\pi$. From \eqref{PinF} and \eqref{Plim}, we deduce
\begin{multline*}
\mathsf{P}_\nu ^\mu  (\cos \zeta ) = \frac{\Gamma (\nu  + \mu  + 1)}{\sqrt{2\pi\sin\zeta}}\left( \e^{ - \left( \left( \nu  + \frac{1}{2} \right)\zeta  + \left( \mu  - \frac{1}{2} \right)\frac{\pi }{2} \right)\im} \hyperOlverF{\frac12+\mu}{\frac12-\mu}{\nu+\frac32}{\frac{ \im \e^{ - \im\zeta } }{2\sin \zeta }} \right.\\ \left.+ \e^{\left( \left( \nu  + \frac{1}{2} \right)\zeta  + \left( \mu  - \frac{1}{2} \right)\frac{\pi }{2} \right)\im} \hyperOlverF{\frac12+\mu}{\frac12-\mu}{\nu+\frac32}{\frac{-\im\e^{\im\zeta } }{2\sin \zeta }} \right)
\end{multline*}
(cf. \cite[p.\ 168]{Magnus1966}). We now substitute the hypergeometric functions by means of the truncated expansion \eqref{hypergeomexp}. In this way, we obtain \eqref{eqfp} with
\begin{gather}\label{eq55}
\begin{split}
\widehat{R}_N^{(\mathsf{P})}(\zeta,\mu,\nu) = \; & \frac{\e^{ - \left( \left( \nu  + \frac{1}{2} \right)\zeta  + \left( \mu  - \frac{1}{2} \right)\frac{\pi }{2} \right)\im}}{2\Gamma \left( \nu  + \frac{3}{2} \right)} R_N^{(F)} \left( \frac{ \im \e^{ - \im\zeta } }{2\sin \zeta },\tfrac{1}{2} + \mu ,\tfrac{1}{2} - \mu ,\nu  + \tfrac{3}{2} \right) \\ & + \frac{\e^{\left( \left( \nu  + \frac{1}{2} \right)\zeta  + \left( \mu  - \frac{1}{2} \right)\frac{\pi }{2} \right)\im}}{2\Gamma \left( \nu  + \frac{3}{2} \right)} R_N^{(F)} \left( \frac{-\im\e^{\im\zeta } }{2\sin \zeta },\tfrac{1}{2} + \mu ,\tfrac{1}{2} - \mu ,\nu  + \tfrac{3}{2} \right) .
\end{split}
\end{gather}
Consequently,
\begin{gather}\label{eq56}
\begin{split}
\left|\widehat{R}_N^{(\mathsf{P})}(\zeta,\mu,\nu)\right| \leq \; & \frac{\e^{\Im (\beta _{\mu ,\nu ,N} )}}{2\left|\Gamma \left( \nu  + \frac{3}{2} \right)\right|}
\left|R_N^{(F)} \left( \frac{ \im \e^{ - \im\zeta } }{2\sin \zeta },\tfrac{1}{2} + \mu ,\tfrac{1}{2} - \mu ,\nu  + \tfrac{3}{2} \right)\right| \\
& + \frac{\e^{- \Im (\beta _{\mu ,\nu ,N} )}}{2\left|\Gamma \left( \nu  + \frac{3}{2} \right)\right|} \left|R_N^{(F)} 
\left( \frac{-\im\e^{\im\zeta } }{2\sin \zeta },\tfrac{1}{2} + \mu ,\tfrac{1}{2} - \mu ,\nu  + \tfrac{3}{2} \right)\right| .
\end{split}
\end{gather}
Now, under the assumptions of Theorem \ref{thm231}, we can apply \eqref{est2} of Theorem \ref{thm22a} to the right-hand side of this inequality to obtain
\begin{align*}
\left|\widehat{R}_N^{(\mathsf{P})}(\zeta,\mu,\nu)\right| \leq\; & \left| \frac{\cos (\pi \mu )}{\cos (\pi \Re \mu )} \right|\left| \frac{\Gamma \left( \frac{1}{2} - \Re \mu  \right)\Gamma (\Re \nu  + \Re \mu  + 1)}{\Gamma \left( \frac{1}{2} - \mu  \right)\Gamma (\nu  + \mu  + 1)}\right| \\ & \times  \frac{\left| a_N (\Re \mu ) \right|\cosh (\Im (\beta _{\mu ,\nu ,N} ))}{\Gamma \left( \Re \nu + \frac{3}{2} + N  \right)\sin ^N \zeta } \times \begin{cases}  \left| \sec \zeta  \right| & \text{ if } \; 2\sin ^2 \zeta  \le 1, \\ 2\sin \zeta  & \text{ if } \; 2\sin ^2 \zeta  > 1.\end{cases}
\end{align*}
Noting that $2\sin ^2 \zeta  \le 1$ occurs whenever $0 < \zeta  \le \frac{\pi}{4}$ or $\frac{3\pi}{4} \le \zeta  < \pi$, and $2\sin ^2 \zeta  > 1$ occurs whenever $\frac{\pi}{4} < \zeta  < \frac{3\pi}{4}$, the proof of the estimate \eqref{boundpq} for $\widehat{R}_N^{(\mathsf{P})}(\zeta,\mu,\nu)$ is complete. With the assumptions of Theorem \ref{thm23}, Theorem \ref{thm22} is applicable to the right-hand side of the inequality \eqref{eq56}, and yields the estimate \eqref{boundpq2} for $\widehat{R}_N^{(\mathsf{P})}(\zeta,\mu,\nu)$.

To prove \eqref{boundp} and \eqref{boundp2}, we first re-write \eqref{eq55} in the form
\begin{align*}
\widehat{R}_N^{(\mathsf{P})}(\zeta,\mu,\nu) = \; & \frac{\cos (\beta _{\mu ,\nu ,N} )}{2\Gamma \left( \nu  + \frac{3}{2} \right)}  \left( \e^{\im\zeta N} \im^N R_N^{(F)} \left( \frac{ \im \e^{ - \im\zeta } }{2\sin \zeta },\tfrac{1}{2} + \mu ,\tfrac{1}{2} - \mu ,\nu  + \tfrac{3}{2} \right) \right. \\ & \hspace{70pt} \left. + \e^{ - \im\zeta N} ( - \im)^N R_N^{(F)} \left( \frac{-\im\e^{\im\zeta } }{2\sin \zeta },\tfrac{1}{2} + \mu ,\tfrac{1}{2} - \mu ,\nu  + \tfrac{3}{2} \right) \right) \\ &
 - \frac{\im \sin (\beta _{\mu ,\nu ,N} )}{2\Gamma \left( \nu  + \frac{3}{2} \right)}  \left( \e^{\im\zeta N} \im^N R_N^{(F)} \left( \frac{ \im \e^{ - \im\zeta } }{2\sin \zeta },\tfrac{1}{2} + \mu ,\tfrac{1}{2} - \mu ,\nu  + \tfrac{3}{2} \right) \right.  \\ & \hspace{70pt} \left.- \e^{ - \im\zeta N} ( - \im)^N R_N^{(F)} \left( \frac{-\im\e^{\im\zeta } }{2\sin \zeta },\tfrac{1}{2} + \mu ,\tfrac{1}{2} - \mu ,\nu  + \tfrac{3}{2} \right) \right).
\end{align*}
Next, we employ the relation
\begin{equation}\label{eq61}
R_N^{(F)} \left( w,\tfrac{1}{2} + \mu ,\tfrac{1}{2} - \mu ,\nu  + \tfrac{3}{2} \right) = ( - 2)^N \frac{a_N (\mu )}{\left( \nu  + \frac{3}{2} \right)_N }w^N  + R_{N + 1}^{(F)} \left( w,\tfrac{1}{2} + \mu ,\tfrac{1}{2} - \mu ,\nu  + \tfrac{3}{2} \right)
\end{equation}
inside the second large parenthesis, to deduce
\begin{align*}
\widehat{R}_N^{(\mathsf{P})}(\zeta,\mu,\nu) = \; & \frac{\cos (\beta _{\mu ,\nu ,N} )}{2\Gamma \left( \nu  + \frac{3}{2} \right)}  \left( \e^{\im\zeta N} \im^N R_N^{(F)} \left( \frac{ \im \e^{ - \im\zeta } }{2\sin \zeta },\tfrac{1}{2} + \mu ,\tfrac{1}{2} - \mu ,\nu  + \tfrac{3}{2} \right) \right. \\ & \hspace{100pt} \left. + \e^{ - \im\zeta N} ( - \im)^N R_N^{(F)} \left( \frac{-\im\e^{\im\zeta } }{2\sin \zeta },\tfrac{1}{2} + \mu ,\tfrac{1}{2} - \mu ,\nu  + \tfrac{3}{2} \right) \right) \\ &
 - \frac{\im \sin (\beta _{\mu ,\nu ,N} )}{2\Gamma \left( \nu  + \frac{3}{2} \right)}  \left( \e^{\im\zeta N} \im^N R_{N+1}^{(F)} \left( \frac{ \im \e^{ - \im\zeta } }{2\sin \zeta },\tfrac{1}{2} + \mu ,\tfrac{1}{2} - \mu ,\nu  + \tfrac{3}{2} \right) \right.  \\ & \hspace{100pt} \left.- \e^{ - \im\zeta N} ( - \im)^N R_{N+1}^{(F)} \left( \frac{-\im\e^{\im\zeta } }{2\sin \zeta },\tfrac{1}{2} + \mu ,\tfrac{1}{2} - \mu ,\nu  + \tfrac{3}{2} \right) \right).
\end{align*}
Thus,
\begin{gather}\label{eq57}
\begin{split}
\left|\widehat{R}_N^{(\mathsf{P})}(\zeta,\mu,\nu)\right| \leq \; & \frac{\left|\cos (\beta _{\mu ,\nu ,N} )\right|}{2\left|\Gamma \left( \nu  + \frac{3}{2} \right)\right|}  \left( \left| R_N^{(F)} \left( \frac{ \im \e^{ - \im\zeta } }{2\sin \zeta },\tfrac{1}{2} + \mu ,\tfrac{1}{2} - \mu ,\nu  + \tfrac{3}{2} \right)\right| \right. \\ & \hspace{120pt} \left. + \left| R_N^{(F)} \left( \frac{-\im\e^{\im\zeta } }{2\sin \zeta },\tfrac{1}{2} + \mu ,\tfrac{1}{2} - \mu ,\nu  + \tfrac{3}{2} \right) \right|\right) \\ &
 + \frac{\left|\sin (\beta _{\mu ,\nu ,N} )\right|}{2\left|\Gamma \left( \nu  + \frac{3}{2} \right)\right|}  \left( \left| R_{N+1}^{(F)} \left( \frac{ \im \e^{ - \im\zeta } }{2\sin \zeta },\tfrac{1}{2} + \mu ,\tfrac{1}{2} - \mu ,\nu  + \tfrac{3}{2} \right) \right|\right.  \\ & \hspace{120pt} \left.+\left| R_{N+1}^{(F)} \left( \frac{-\im\e^{\im\zeta } }{2\sin \zeta },\tfrac{1}{2} + \mu ,\tfrac{1}{2} - \mu ,\nu  + \tfrac{3}{2} \right)\right| \right).
\end{split}
\end{gather}
Under the assumptions of Theorem \ref{thm231}, we can apply \eqref{est2} of Theorem \ref{thm22a} to the right-hand side of this inequality to obtain the bound \eqref{boundp}. With the assumptions of Theorem \ref{thm23}, Theorem \ref{thm22} is applicable to the right-hand side of the inequality \eqref{eq57}, leading to the estimate \eqref{boundp2}.

We conclude this section with the proof of the estimates for the reminder term $\widehat{R}_N^{(\mathsf{Q})}(\zeta,\mu,\nu)$. From \eqref{QhypergeoRepr2} and \eqref{eq20}, we deduce
\begin{multline*}
\mathsf{Q}_\nu ^\mu  (\cos \zeta ) = -\frac{\im}{2}\sqrt {\frac{\pi }{2\sin \zeta }} \Gamma (\nu  + \mu  + 1)\left( \e^{ - \left( \left( \nu  + \frac{1}{2} \right)\zeta  + \left( \mu  - \frac{1}{2} \right)\frac{\pi }{2} \right)\im} \hyperOlverF{\frac12+\mu}{\frac12-\mu}{\nu+\frac32}{\frac{ \im \e^{ - \im\zeta } }{2\sin \zeta }} \right.\\ \left.- \e^{\left( \left( \nu  + \frac{1}{2} \right)\zeta  + \left( \mu  - \frac{1}{2} \right)\frac{\pi }{2} \right)\im} \hyperOlverF{\frac12+\mu}{\frac12-\mu}{\nu+\frac32}{\frac{-\im\e^{\im\zeta } }{2\sin \zeta }} \right) 
\end{multline*}
(cf. \cite[p.\ 168]{Magnus1966}). We now substitute the hypergeometric functions by means of the truncated expansion \eqref{hypergeomexp}. In this way, we obtain \eqref{eqfq} with
\begin{gather}\label{eq60}
\begin{split}
\widehat{R}_N^{(\mathsf{Q})}(\zeta,\mu,\nu) = \; & \frac{\im\e^{ - \left( \left( \nu  + \frac{1}{2} \right)\zeta  + \left( \mu  - \frac{1}{2} \right)\frac{\pi }{2} \right)\im}}{2\Gamma \left( \nu  + \frac{3}{2} \right)} R_N^{(F)} \left( \frac{ \im \e^{ - \im\zeta } }{2\sin \zeta },\tfrac{1}{2} + \mu ,\tfrac{1}{2} - \mu ,\nu  + \tfrac{3}{2} \right) \\ & - \frac{\im\e^{\left( \left( \nu  + \frac{1}{2} \right)\zeta  + \left( \mu  - \frac{1}{2} \right)\frac{\pi }{2} \right)\im}}{2\Gamma \left( \nu  + \frac{3}{2} \right)} R_N^{(F)} \left( \frac{-\im\e^{\im\zeta } }{2\sin \zeta },\tfrac{1}{2} + \mu ,\tfrac{1}{2} - \mu ,\nu  + \tfrac{3}{2} \right).
\end{split}
\end{gather}
We can now proceed analogously to the case of $\widehat{R}_N^{(\mathsf{P})}(\zeta,\mu,\nu)$ and derive the bounds \eqref{boundpq} and \eqref{boundpq2} for $\widehat{R}_N^{(\mathsf{Q})}(\zeta,\mu,\nu)$. To prove \eqref{boundq} and \eqref{boundq2}, we first re-write \eqref{eq60} in the form
\begin{align*}
\widehat{R}_N^{(\mathsf{Q})}(\zeta,\mu,\nu) = \; & \frac{\sin (\beta _{\mu ,\nu ,N} )}{2\Gamma \left( \nu  + \frac{3}{2} \right)}  \left( \e^{\im\zeta N} \im^N R_N^{(F)} \left( \frac{ \im \e^{ - \im\zeta } }{2\sin \zeta },\tfrac{1}{2} + \mu ,\tfrac{1}{2} - \mu ,\nu  + \tfrac{3}{2} \right) \right. \\ & \hspace{70pt} \left. + \e^{ - \im\zeta N} ( - \im)^N R_N^{(F)} \left( \frac{-\im\e^{\im\zeta } }{2\sin \zeta },\tfrac{1}{2} + \mu ,\tfrac{1}{2} - \mu ,\nu  + \tfrac{3}{2} \right) \right) \\ &
+ \frac{\im \cos (\beta _{\mu ,\nu ,N} )}{2\Gamma \left( \nu  + \frac{3}{2} \right)}  \left( \e^{\im\zeta N} \im^N R_N^{(F)} \left( \frac{ \im \e^{ - \im\zeta } }{2\sin \zeta },\tfrac{1}{2} + \mu ,\tfrac{1}{2} - \mu ,\nu  + \tfrac{3}{2} \right) \right.  \\ & \hspace{70pt} \left.- \e^{ - \im\zeta N} ( - \im)^N R_N^{(F)} \left( \frac{-\im\e^{\im\zeta } }{2\sin \zeta },\tfrac{1}{2} + \mu ,\tfrac{1}{2} - \mu ,\nu  + \tfrac{3}{2} \right) \right).
\end{align*}
Next, we employ the relation \eqref{eq61} inside the second large parenthesis, to deduce
\begin{align*}
\widehat{R}_N^{(\mathsf{Q})}(\zeta,\mu,\nu) = \; & \frac{\sin (\beta _{\mu ,\nu ,N} )}{2\Gamma \left( \nu  + \frac{3}{2} \right)}  \left( \e^{\im\zeta N} \im^N R_N^{(F)} \left( \frac{ \im \e^{ - \im\zeta } }{2\sin \zeta },\tfrac{1}{2} + \mu ,\tfrac{1}{2} - \mu ,\nu  + \tfrac{3}{2} \right) \right. \\ & \hspace{70pt} \left. + \e^{ - \im\zeta N} ( - \im)^N R_N^{(F)} \left( \frac{-\im\e^{\im\zeta } }{2\sin \zeta },\tfrac{1}{2} + \mu ,\tfrac{1}{2} - \mu ,\nu  + \tfrac{3}{2} \right) \right) \\ &
+ \frac{\im \cos (\beta _{\mu ,\nu ,N} )}{2\Gamma \left( \nu  + \frac{3}{2} \right)}  \left( \e^{\im\zeta N} \im^N R_{N+1}^{(F)} \left( \frac{ \im \e^{ - \im\zeta } }{2\sin \zeta },\tfrac{1}{2} + \mu ,\tfrac{1}{2} - \mu ,\nu  + \tfrac{3}{2} \right) \right.  \\ & \hspace{70pt} \left.- \e^{ - \im\zeta N} ( - \im)^N R_{N+1}^{(F)} \left( \frac{-\im\e^{\im\zeta } }{2\sin \zeta },\tfrac{1}{2} + \mu ,\tfrac{1}{2} - \mu ,\nu  + \tfrac{3}{2} \right) \right).
\end{align*}
We can now proceed similarly to the case of $\widehat{R}_N^{(\mathsf{P})}(\zeta,\mu,\nu)$ and derive the bounds \eqref{boundq} and \eqref{boundq2} for $\widehat{R}_N^{(\mathsf{Q})}(\zeta,\mu,\nu)$.

\section{Ferrers functions for large \texorpdfstring{$\mu$}{mu} and fixed \texorpdfstring{$\nu$}{nu}: proof}\label{Ferrerslargemu}

To obtain large-$\mu$ asymptotic expansions for the Ferrers functions, we can combine the expansions \eqref{formal1} and \eqref{formal2} with the identities
\begin{align*}
\mathsf{P}_{\nu} ^{\mu}  (\cos \zeta ) = \; & 
\frac{\e^{\frac{2\mu-4\nu-3}{4}\pi \im}}{\Gamma(\nu-\mu+1)}\sqrt{\frac2{\pi\sin\zeta}} Q_{-\mu-\frac12} ^{\nu+\frac12}  (-\im\cot\zeta)\\
=\; &\sin(\pi\mu)\e^{\frac{2\mu-1}{4}\pi \im}\Gamma(\mu-\nu)\sqrt{\frac2{\pi\sin\zeta}} P_{\mu-\frac12} ^{\nu+\frac12}  (-\im\cot\zeta)\\
& -\frac{\sin(\pi(\nu+\mu))}{\pi}\e^{\frac{2\mu-4\nu-3}{4}\pi \im}\Gamma(\mu-\nu)\sqrt{\frac2{\pi\sin\zeta}} Q_{\mu-\frac12} ^{\nu+\frac12}(-\im\cot\zeta),
\end{align*}
\begin{align*}
\mathsf{Q}_{\nu} ^{\mu}  (\cos \zeta ) = \; &\Gamma(\nu+\mu+1)\sqrt{\frac{\pi}{8\sin\zeta}}
\left(\e^{-\frac{2\mu+1}{4}\pi\im}P_{\mu-\frac12} ^{-\nu-\frac12}  (-\im\cot\zeta)
+\e^{\frac{2\mu+1}{4}\pi\im}P_{\mu-\frac12} ^{-\nu-\frac12}  (\im\cot\zeta)\right)\\
= \; &\Gamma(\mu-\nu)\sqrt{\frac{\pi}{8\sin\zeta}}
\left(\e^{-\frac{2\mu+1}{4}\pi\im}P_{\mu-\frac12} ^{\nu+\frac12}  (-\im\cot\zeta)
+\e^{\frac{2\mu+1}{4}\pi\im}P_{\mu-\frac12} ^{\nu+\frac12}  (\im\cot\zeta)\right)\\
&+\Gamma(\mu-\nu)\cos(\pi\nu)\frac{\e^{-(\nu+\frac12)\pi\im}}{\sqrt{2\pi\sin\zeta}}
\left(\e^{\frac{3-2\mu}{4}\pi\im}Q_{\mu-\frac12} ^{\nu+\frac12}  (-\im\cot\zeta)
+\e^{\frac{2\mu-3}{4}\pi\im}Q_{\mu-\frac12} ^{\nu+\frac12}  (\im\cot\zeta)\right).
\end{align*}
These identities follow by combining \eqref{eq14} and \eqref{eq59} with Whipple's formulae \eqref{Whipple1} and \eqref{Whipple2} and
then using the connection formulae \cite[\href{http://dlmf.nist.gov/14.9.E12}{Eq.\ 14.9.12}]{NIST:DLMF} and \cite[\href{http://dlmf.nist.gov/14.9.E15}{Eq.\ 14.9.15}]{NIST:DLMF}.

Let $\eps$ be an arbitrary fixed positive number. Since $\pm \im\cot \zeta  = \cosh \left( \log \left(\cot \left( \tfrac{1}{2}\zeta \right)\right) \pm \frac{\pi}{2}\im \right)$, it follows from the expansions \eqref{formal1} and \eqref{formal2} that for $\eps\leq\zeta<\frac{\pi}{2}$, the asymptotic expansions \eqref{formal12} and \eqref{formal13} hold as $\Re \mu\to+\infty$, with $\Im \mu$ being bounded. Examination of the remainder terms in the limiting case $\zeta\to \frac{\pi}{2}$ (using Theorems \ref{thm1} and \ref{thm2}), shows that these inverse factorial expansions are actually valid for $\eps\leq\zeta\leq \frac{\pi}{2}$. To extend these expansions to the larger interval $\eps\leq\zeta\leq \pi-\eps$, we can proceed as follows. The Ferrers functions satisfy the connection relations
\begin{align*}
\mathsf{P}_{\nu} ^{\mu}  (-x) & = \cos(\pi(\nu+\mu))\mathsf{P}_{\nu} ^{\mu}  (x)-\tfrac{2}{\pi}\sin(\pi(\nu+\mu))\mathsf{Q}_{\nu} ^{\mu}  (x),\\
\mathsf{Q}_{\nu} ^{\mu}  (-x) & =  -\cos(\pi(\nu+\mu))\mathsf{Q}_{\nu} ^{\mu}  (x)-\tfrac{\pi}{2}\sin(\pi(\nu+\mu))\mathsf{P}_{\nu} ^{\mu}  (x),
\end{align*}
for $0\leq x<1$ (see, for example, \cite[\href{http://dlmf.nist.gov/14.9.E8}{Eq.\ 14.9.8}]{NIST:DLMF} and \cite[\href{http://dlmf.nist.gov/14.9.E10}{Eq.\ 14.9.10}]{NIST:DLMF}). We employ these relations with $x = -\cos\zeta = \cos (\pi -\zeta )$, $\frac{\pi}{2}<\zeta\leq \pi-\eps$, and use the previously established asymptotic expansions for the right-hand sides of the resulting equalities. After simplification, we find that $\mathsf{P}_{\nu} ^{\mu}  (\cos \zeta ) $ and $\mathsf{Q}_{\nu} ^{\mu}  (\cos \zeta )$, $\frac{\pi}{2}< \zeta\leq \pi-\eps$, possess the same asymptotic expansions as they do for $\eps\leq\zeta\leq \frac{\pi}{2}$.

\section{Numerical examples}\label{Numerics}

In this section, we provide some numerical examples to demonstrate the sharpness of our error bounds and the accuracy of our asymptotic approximations. Taking $\nu=10$, $\mu=\frac65$ and $\xi=\frac45$, 
we obtain $\e^{-\pi \im\mu }Q_\nu ^\mu  (\cosh \xi )\approx 0.00168049$. If we take $N=2$ terms on the right-hand side (RHS) of \eqref{formal2},
we obtain the approximation $0.00169164$, that is, in Theorem \ref{thm2} we have $R_N^{(Q)} (\xi ,\mu ,\nu)\approx 8642.4139$, and the RHS
of \eqref{eq10a} is approximately $10618.854$. 
Note that $8642/10619\approx0.81$, which is close to $1$, demonstrating the sharpness of the error bound \eqref{eq10a}.

\begin{figure}[htbp]
\centering
\includegraphics[width=0.4\textwidth]{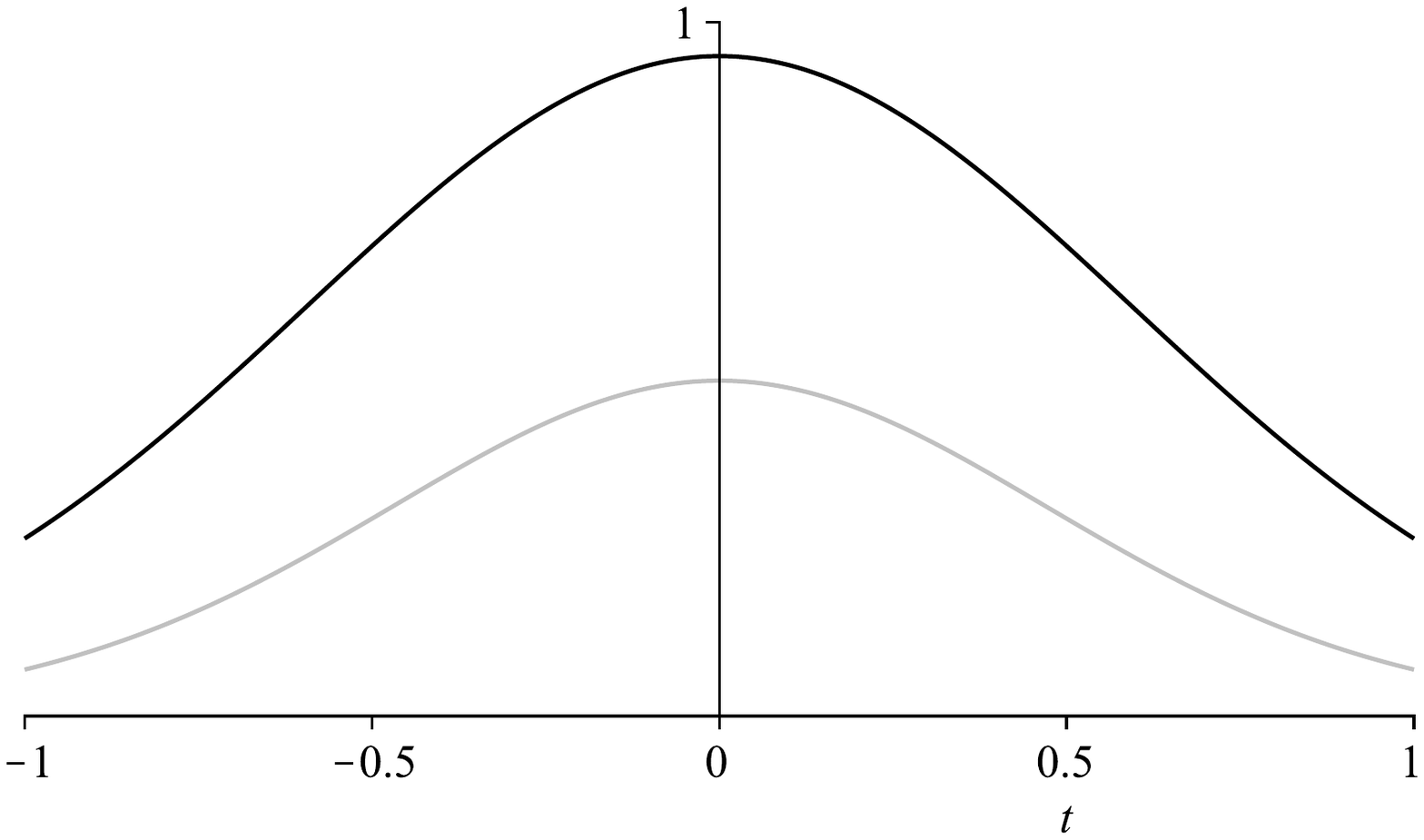}%
\caption{\rm (LHS of \eqref{eq10a})/(RHS of \eqref{eq10a}) for the cases $\nu=10+5\im t$, $\mu=\frac65$, $\xi=\frac45$ and $N=1$ (black) and $N=5$ (grey).}
\label{fig2}
\end{figure}

In Figure \ref{fig2}, we take $\nu=10+5\im t$, $\mu=\frac65$ and $\xi=\frac45$, and plot (LHS of \eqref{eq10a})/(RHS of \eqref{eq10a}) in the cases that
$N=1$ (black) and $N=5$ (grey), respectively. The graph illustrates that our error bound \eqref{eq10a} is sharp, and that the result in Theorem \ref{thm2} is indeed useful when $\Re\nu\to+\infty$.

\begin{figure}[htbp]
\centering
\includegraphics[width=0.4\textwidth]{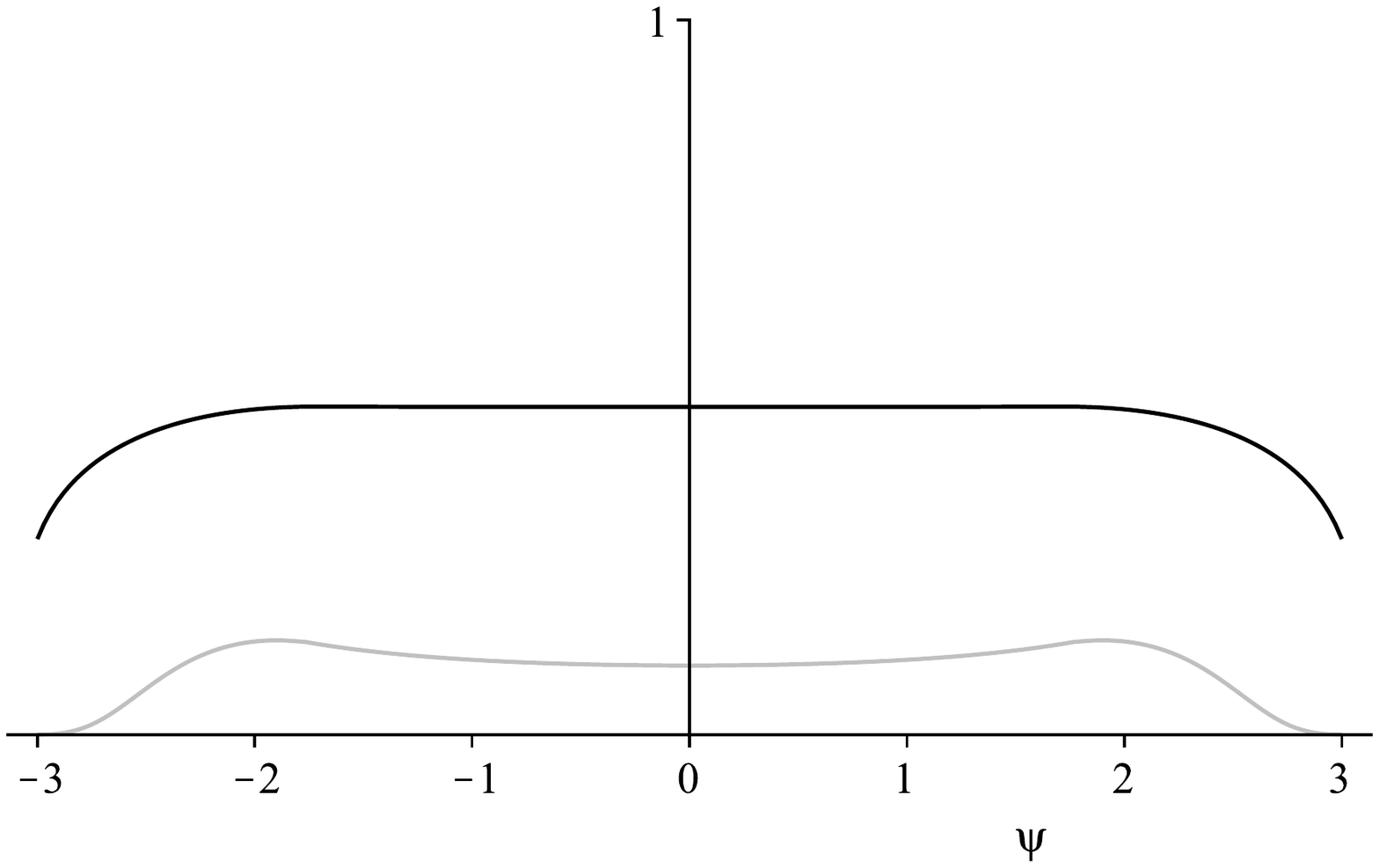}%
\caption{\rm (LHS of \eqref{factexpQa})/(RHS of \eqref{factexpQa}) for the cases $\nu=10\e^{\im \psi}$, $\mu=\frac65$, $\xi=\frac45$ and $N=1$ (black) and $N=5$ (grey).}
\label{fig3}
\end{figure}

In Figure \ref{fig3}, we take $\nu=10\e^{\im\psi}$, $\mu=\frac65$ and $\xi=\frac45$, and plot (LHS of \eqref{factexpQa})/(RHS of \eqref{factexpQa}) in the cases that
$N=1$ (black) and $N=5$ (grey), respectively. The graph illustrates that our error bound \eqref{factexpQa} is reasonably sharp, and that the result in Theorem \ref{thm21}
holds when $|\nu|\to+\infty$ in large sectors of the form $|\arg\nu|\leq \pi-\delta$ ($<\pi$).

\begin{figure}[htbp]
\centering
\includegraphics[width=0.4\textwidth]{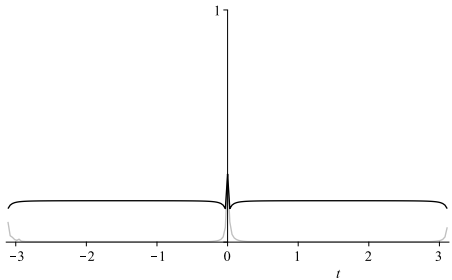}%
\caption{\rm (LHS of \eqref{factexpQa})/(RHS of \eqref{factexpQa}) for the cases $\mu=\frac65$, $\xi=0.01+\im t$, $N=1$ and $\nu=10$ (black) and $\nu=10\e^{2\im}$ (grey).}
\label{fig4}
\end{figure}

In Figure \ref{fig4}, we take $\mu=\frac65$, $\xi=0.01+\im t$ and $N=1$, and plot (LHS of \eqref{factexpQa})/(RHS of \eqref{factexpQa}) in the cases that
$\nu=10$ (black) and $\nu=10\e^{2\im}$ (grey). These are extreme cases. We observe that when $\nu=10$, the error bound is still sharp. Even in the case that $\nu=10\e^{2\im}$, we see that the bound is realistic near the endpoints $\xi=0,\pm\pi\im$.

In Figure \ref{fig5}, we illustrate how the accuracy of the large-$\nu$ approximations \eqref{eq3} and \eqref{eq10} for the associated Legendre functions $P_\nu ^\mu  (z )$
and  $Q_\nu ^\mu  (z )$ diminishes as $z$ approaches $1$, but they are uniformly valid for $z\geq 1+\eps$ ($>1$).
Likewise, in Figure \ref{fig6}, we illustrate how the accuracy of the large-$\mu$ approximation \eqref{formal5} for the associated Legendre function $Q_\nu ^\mu  (z )$
decreases as $z$ approaches $+\infty$, but is uniformly valid for $z\in(1,Z]$.
Similarly, in Table \ref{table1}, we illustrate how the accuracy of the large-$\nu$ approximations \eqref{eq15} and \eqref{eq21} for the Ferrers functions 
$\mathsf{P}_\nu ^\mu  (x)$ and  $\mathsf{Q}_\nu ^\mu  (x)$ diminishes as $x$ approaches $1$.

\begin{figure}[htbp]
\centering
\includegraphics[width=0.45\textwidth]{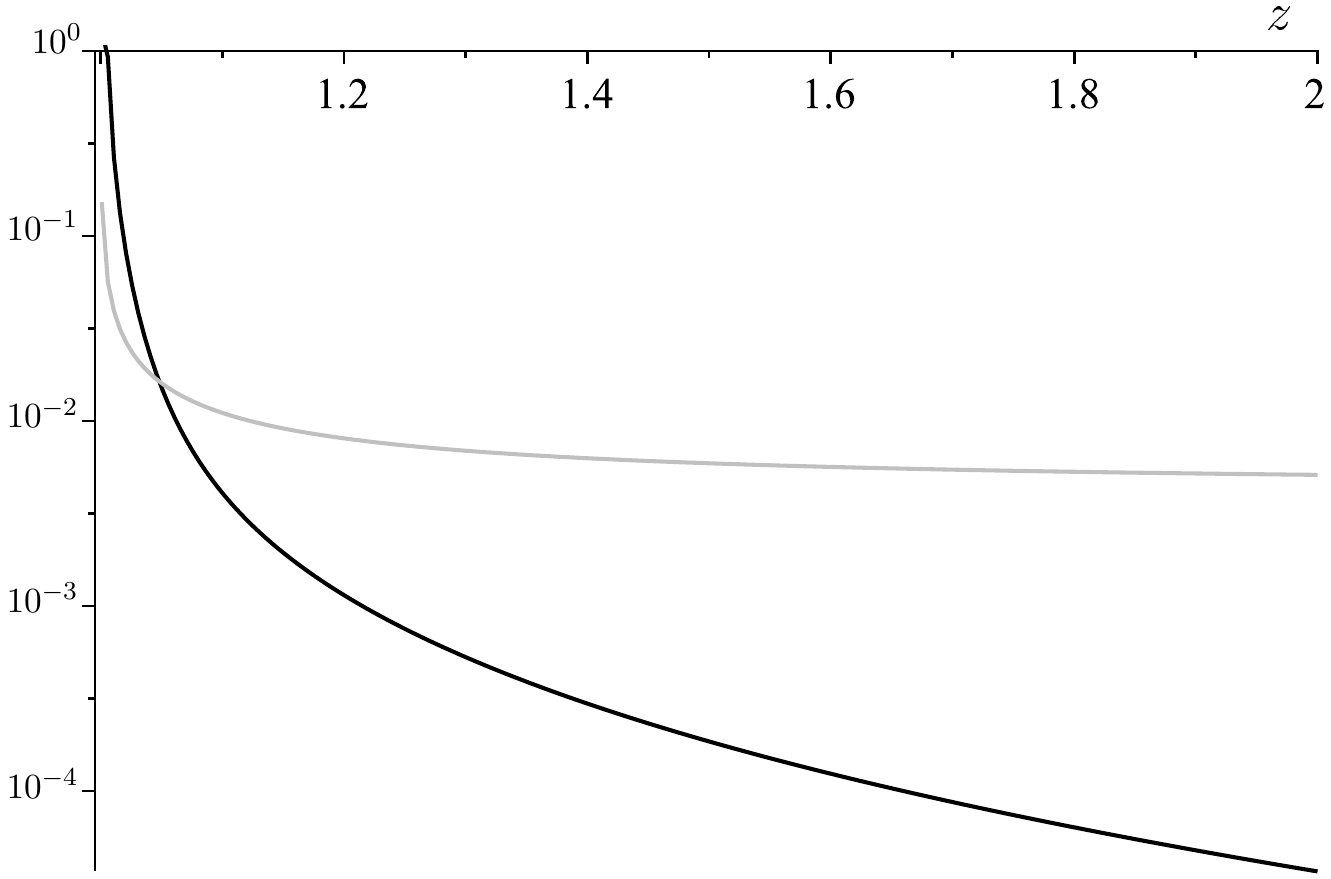}%
\hglue 1cm\includegraphics[width=0.45\textwidth]{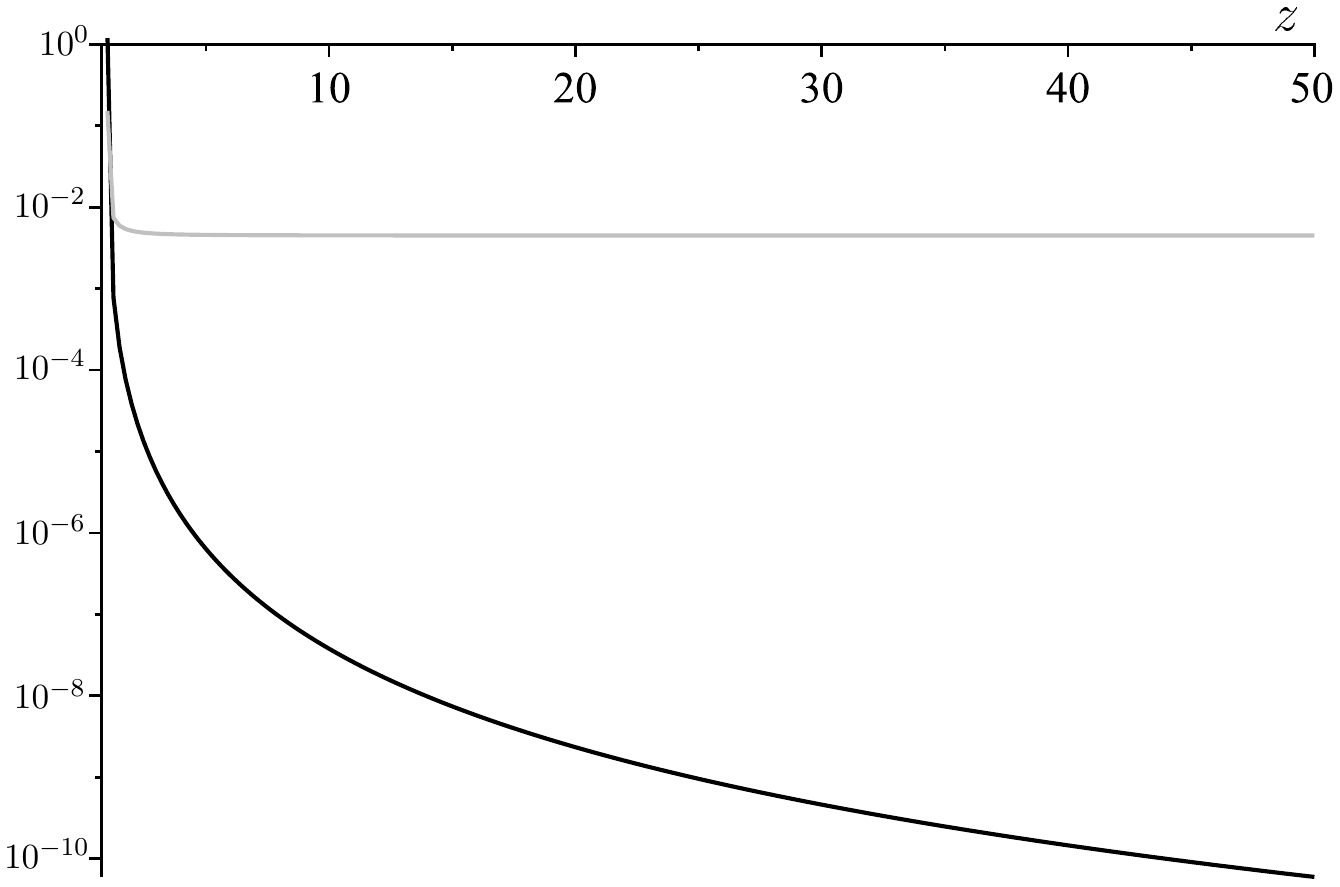}%
\caption{\rm Relative errors of the approximations \eqref{eq3} (black) and \eqref{eq10} (grey) for the cases $\nu=10$, 
$\mu=\frac65$, $N=M=2$, $z=\cosh\xi\in(1,2)$ (left) and $z=\cosh\xi\in(1,50)$ (right).}
\label{fig5}
\end{figure}

\begin{figure}[htbp]
\centering
\includegraphics[width=0.45\textwidth]{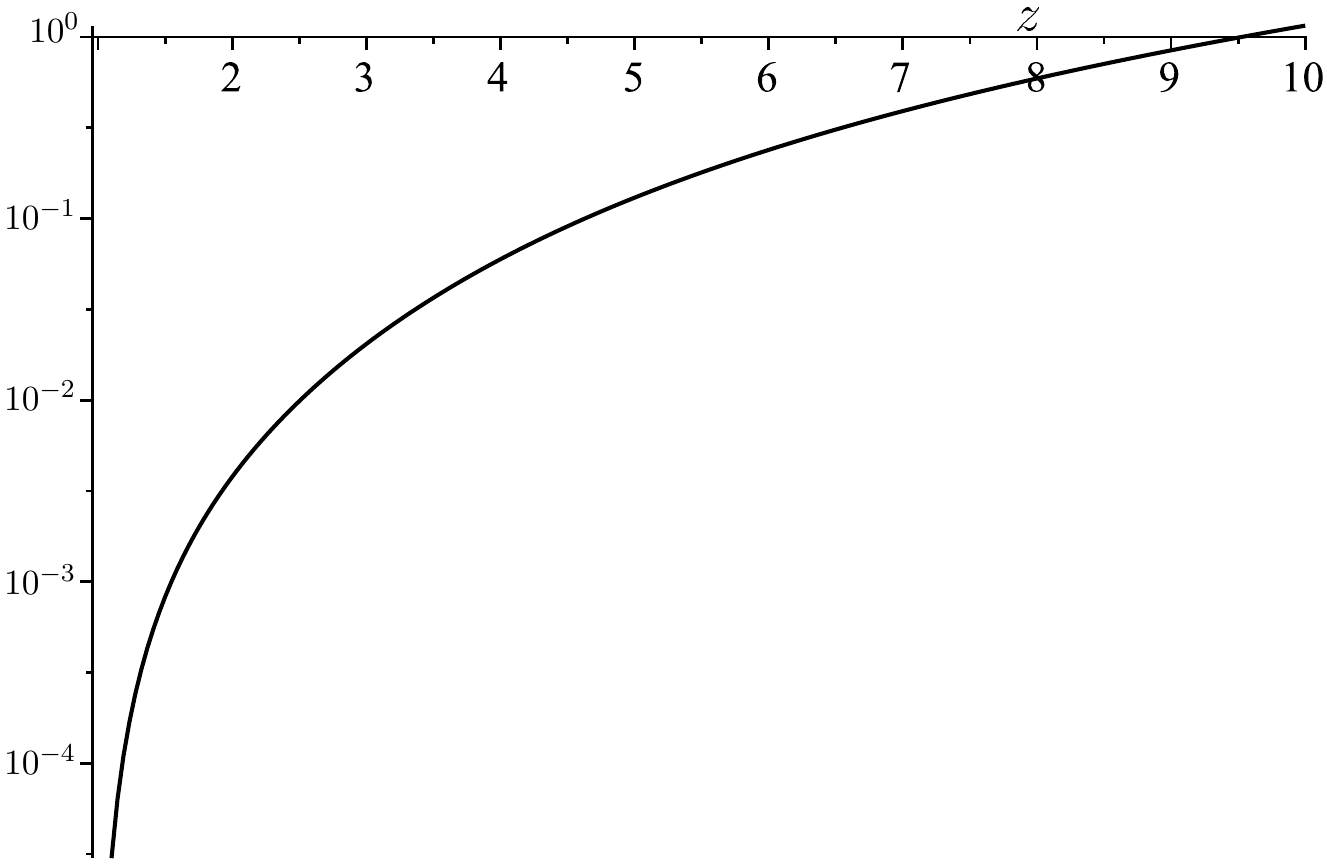}%
\caption{\rm Relative error of the approximation \eqref{formal5} for the case 
$\nu=\frac65$, $\mu=10$, $z=\coth\xi\in(1,10)$ and taking 2 terms in both sums on the right-hand side of \eqref{formal5}.}
\label{fig6}
\end{figure}

\begin{table}[h]
\renewcommand{\arraystretch}{1.25}
\begin{center}
\begin{tabular}{r|r|r|r|r|r|r}
\hline\hline
\multicolumn{1}{c|}{$x=\cos\zeta$} & \multicolumn{1}{|c|}{$\mathsf{P}_\nu ^\mu  (\cos \zeta )$} & \multicolumn{1}{|c|}{RHS of \eqref{eq15}} & \multicolumn{1}{|c|}{relative error} & \multicolumn{1}{|c|}{$\mathsf{Q}_\nu ^\mu  (\cos \zeta )$} & \multicolumn{1}{|c|}{RHS of \eqref{eq21}} & \multicolumn{1}{|c}{relative error} \\
\hline
$0.9$ & $-5.58976$ & $-5.57356$ & $0.00291$ & $13.1050$ & $13.1067$ & $0.00013$ \\
$0.95$ & $1.86065$ & $1.82948$ & $0.01704$ & $-18.4721$ & $-18.4407$ & $0.00170$ \\
$0.99$ & $-8.96386$ & $-8.72156$ & $0.02778$ & $24.8288$ & $24.7007$ & $0.00519$ \\
$0.999$ & $-31.5013$ & $-32.1480$ & $0.02012$ & $-35.7525$ & $-30.3175$ & $0.17927$ \\
\hline\hline
\end{tabular}
\end{center}
\caption{Relative errors of the approximations \eqref{eq15} and \eqref{eq21} for the case $\nu=20$, $\mu=\frac65$ and $N=2$.}
\label{table1}
\end{table}%

\section*{Acknowledgement}
The authors' research was supported by a research grant (GRANT11863412/70NANB15H221) from the National Institute of Standards and Technology. The first author was also supported by a Premium Postdoctoral Fellowship of the Hungarian Academy of Sciences. The authors thank the referees for very helpful comments and suggestions for improving the presentation.


\bigskip

\end{document}